\newcommand\bna{\begin{eqnarray*}} 
\newcommand\ena{\end{eqnarray*}}
\newcommand\bnan{\begin{eqnarray}} 
\newcommand\enan{\end{eqnarray}}
\newcommand\bnp{\begin{proof}} 
\newcommand\enp{\end{proof}}
\newcommand\bneq{\begin{eqnarray*}\left\lbrace \begin{array}{rcl}}
\newcommand\eneq{\end{array} \right.\end{eqnarray*}}
\newcommand\bneqn{\begin{eqnarray}\left\lbrace \begin{array}{rcl}}
\newcommand\eneqn{\end{array} \right.\end{eqnarray}}
\newcommand\nor[2]{\left\|#1\right\|_{#2}}
\numberwithin{equation}{section}
\newtheorem{lemma}{Lemma}[section]
\newtheorem{theorem}[lemma]{Theorem}
\newtheorem{proposition}[lemma]{Proposition}
\newtheorem{corollary}[lemma]{Corollary}
\theoremstyle{definition}
\newtheorem{definition}[lemma]{Definition}
\newtheorem{remark}[lemma]{Remark}
\def\keywords{
    \vspace{1ex}
    \noindent
    \if@twocolumn
      \small{\bf  Keywords}\/---$\!$    \else
      \begin{center}\small\ {\bf Keywords}\end{center}\quotation\small
    \fi}
\def\endkeywords{\vspace{0.6em}\par\if@twocolumn\else\endquotation\fi
    \normalsize\rm}
\newcommand{\G}{\ensuremath{\mathcal G}}
\renewcommand{\L}{\ensuremath{\mathcal L}}
\DeclareMathOperator{\phg}{phg}
\newcommand{\obs}{\ensuremath{b_\omega}}
\DeclareMathOperator{\loc}{loc}
\newcommand{\mb}[1]{\ensuremath{\mathbb{#1}}}
\newcommand{\N}{{\mb{N}}}
\newcommand{\R}{{\mb{R}}}
\newcommand{\C}{{\mb{C}}}
\newcommand{\eps}{\varepsilon}
\newcommand{\D}{\ensuremath{\mathscr D}}
\newcommand{\Bo}{\ensuremath{\mathcal{B}}}
\let \Re \relax
\DeclareMathOperator{\Re}{Re}
\let \Im \relax
\DeclareMathOperator{\Im}{Im}
\newcommand{\ovl}[1]{\overline{#1}}
\newcommand{\Con}{\ensuremath{\mathscr C}}
\newcommand{\Cinf}{\ensuremath{\Con^\infty}}
\newcommand{\Cinfc}{\ensuremath{\Con^\infty}_{c}}
\DeclareMathOperator{\supp}{supp}
\DeclareMathOperator{\dist}{dist}
\DeclareMathOperator{\Op}{Op}
\DeclareMathOperator{\id}{Id}
\newcommand{\transp}{\ensuremath{\phantom{}^{t}}}
\renewcommand{\d}{\ensuremath{\partial}}
\newcommand{\nhd}{neighborhood\xspace}
\newcommand{\wrt}{w.r.t.\@\xspace}
\newcommand{\eg}{e.g.\@\xspace}
\newcommand{\resp}{resp.\@\xspace}
\let \div \relax
\DeclareMathOperator{\div}{div}
\DeclareMathOperator{\length}{length}
\def\x{x}
\newcommand{\E}{\mathscr E}
\renewcommand{\H}{\ensuremath{\mathcal H}}
\begin{document}
\title{Uniform observability estimates for linear waves}
\thanks{The first author is partially supported by the Agence Nationale de la Recherche under grants EMAQS ANR-2011-BS01-017-0 and IPROBLEMS ANR-13-JS01-0006.}
\thanks{The second author is partially supported by the Agence Nationale de la Recherche under grant GERASIC ANR-13-BS01-0007-01.}
%
\author{Camille Laurent}\address{CNRS UMR 7598 and UPMC Univ Paris 06, Laboratoire Jacques-Louis Lions, F-75005, Paris, France, email: laurent@ann.jussieu.fr}
\author{Matthieu L\'eautaud}\address{Universit\'e Paris Diderot, Institut de Math\'ematiques de Jussieu-Paris Rive Gauche, UMR 7586, B\^atiment Sophie Germain, 75205 Paris Cedex 13 France, email: leautaud@math.univ-paris-diderot.fr}

%
%
\begin{abstract}
 In this article, we give a completely constructive proof of the observability/controllability of the wave equation on a compact manifold under optimal geometric conditions. This contrasts with the original proof of Bardos-Lebeau-Rauch~\cite{BLR:92}, which contains two non-constructive arguments.
Our method is based on the Dehman-Lebeau~\cite{DL:09} Egorov approach to treat the high-frequencies, and the optimal unique continuation stability result of the authors~\cite{LL:15} for the low-frequencies.

As an application, we first give estimates of the blowup of the observability constant when the time tends to the limit geometric control time (for wave equations with possibly lower order terms).
Second, we provide (on manifolds with or without boundary) with an explicit dependence of the observability constant with respect to the addition of a bounded potential to the equation.
 \end{abstract}
\begin{resume} 
Dans cet article, nous proposons une d\'emonstration compl\`etement constructive de l'observa-bilit\'e/la contr\^olabilit\'e de l'\'equation des ondes sur une vari\'et\'e compacte, sous les conditions g\'eom\'etriques optimales.
Ceci contraste avec la preuve originelle de Bardos-Lebeau-Rauch~\cite{BLR:92}, qui contient deux arguments non constructifs.
Notre m\'ethode repose sur l'approche par Egorov de Dehman-Lebeau~\cite{DL:09} pour traiter les hautes fr\'equences, et sur le r\'esultat de stabilit\'e optimal pour le prolongement unique, obtenu par les auteurs dans~\cite{LL:15}, pour les basses fr\'equences.

Comme application, nous donnons tout d'abord des bornes sur l'explosion de la constante d'observa-bilit\'e lorsque le temps d'observation tends vers le temps minimal de contr\^ole g\'eom\'etrique (pour des \'equations d'ondes contenant \'eventuellement des termes d'ordre inf\'erieur). Enfin, nous estimons (sur des vari\'et\'es avec ou sans bord) la d\'ependance de la constante d'observabilit\'e par rapport \`a l'ajout d'un terme de potentiel born\'e dans l'\'equation.

\bigskip
\begin{center}
\textit{
In honor of Jean-Michel Coron on the occasion of his 30th birthday. 
}
\end{center}
 \end{resume}
\subjclass{35L05, 
93B07, 
93B05
}

\keywords{Wave equation, observability, controllability, geometric control conditions, uniform estimates}
\maketitle

\tableofcontents


\section{Introduction}

\subsection{Motivation}
This article is devoted to control and observation issues for the wave equation on a $n$-dimensional compact Riemannian manifold $(M,g)$ with or without boundary $\d M$. 
Denoting by $\Delta$ the nonpositive Laplace-Beltrami operator on $M$ and by $L$ the selfadjoint operator $- \Delta$ on $L^2(M)$ with Dirichlet boundary conditions, the general controllability problem in time $T>0$ is whether, for each data $ (u_0, u_1)$ one can find a control function $f$ such that the solution $u$ to
\begin{equation}
\label{e:KG-cont}
\begin{cases}
\d_t^2 u + L u = \obs  f\\
(u(0), \d_t u(0)) = (u_0, u_1)
\end{cases}
\end{equation}
satisfies $(u(T), \d_t u(T)) = (0, 0)$ (or, equivalently, any given state). In this equation the control $f$ acts on the state $u$ only in the set $\omega := \{\obs \neq 0\}$ where $\obs$ is, say, a continuous function. A classical functional analysis argument~\cite{DR:77} reduces this existence problem to that of finding (for the same time $T>0$) a constant $\mathfrak{C}_{obs}>0$, such that all solutions to 
\begin{equation}
\label{e:KG-obs}
\begin{cases}
\d_t^2 v + L^* v = 0 \\
(v(0), \d_t v(0)) = (v_0, v_1)
\end{cases}
\end{equation}
with $(v_0, v_1) \in H^1(M) \times L^2(M)$ satisfy the so called observability inequality
\bnan
\label{e:general-obs}
 \mathfrak{C}_{obs}  \int_0^T \|\obs v(t) \|_{H^1(M)}^2 dt \geq \E_1(v_0 , v_1) = \frac12 \left( \| v_0 \|_{H^1(M)}^2 + \| v_1 \|_{L^2(M)}^2  \right) .
\enan
Such an estimate translates that the full energy of the state $v$ (which is preserved through time) may be recovered from the sole observation on the (possibly small) set $\omega$ during the time interval $(0,T)$.

Not only the controllability problem and the observability problem are equivalent, but also, in case~\eqref{e:general-obs} holds, the constant $\mathfrak{C}_{obs}^\frac12$ bounds the norm of the control operator $(u_0, u_1) \mapsto f$ mapping to the data to be controlled the associated optimal control function $f$ (in appropriate spaces).

\bigskip
A first natural attempt at proving the energy inequality~\eqref{e:general-obs} in dimension $n\geq 2$ consists in multiplying~\eqref{e:KG-obs} by $\mathfrak{M} v$, where $\mathfrak{M}$ is an appropriate first order differential operator, and perform integrations by parts. Such ``multiplier methods'' have been developed in a large number of situations, leading to~\eqref{e:general-obs} under strong geometric conditions on $(\omega,T)$  (see the references~\cite{Lio:88,Kom:94}): basically, in the case where $M$ is an open subset of $\R^n$, given a point $x_0 \in \R^n$, it is required that $\omega$ contains a neighborhood of the points $x$ of the boundary where $(x-x_0) \cdot n(x) >0$, $n$ being the outgoing normal to $\d M$ and that $T> 2 \sup_{x \in M}|x-x_0|$ (the multiplier is $\mathfrak{M} = (x-x_0) \cdot \d_x$).

Another constructive proof uses global Carleman estimates (see e.g. \cite{DZZ:08,BDBE:13} , which amount to prove positivity properties for $P_\psi^* P_\psi$, where $P_\psi = e^{\tau \psi}(\d_t^2 + L^*)e^{-\tau \psi}$ is a conjugated operator. Here, $\tau$ is a large parameter and $\psi$ an appropriately chosen weight function. Unfortunately, global weights $\psi$ that give rise to positivity of $P_\psi^* P_\psi$ require that $(\omega,T)$  satisfy similar conditions as those coming from multiplier methods.

The advantage of these two direct computational methods is that the proofs are constructive and provide with effective bounds, that are uniform with respect to parameters.
However, though very effective, they present an important drawback: they require very strong and inappropriate geometric conditions
(see~\cite{Miller:02} for a geometric discussion on multiplier methods). Indeed, they do not capture the main features of wave propagation, stating roughly that most of the energy should travel along rays of geometric optics. 

\bigskip
The complete characterization of $(\omega,T)$ for which the observability inequality~\eqref{e:general-obs} holds was achieved in~\cite{BLR:88,BLR:92}: observability holds if and only if the Geometric Control Condition (GCC) does: every ray of geometric optics enters $\omega$ in the time interval $(0,T)$ (see also~\cite{BG:97} for the ``only if'' part). The proof of~\cite{BLR:88,BLR:92} is based on a compactness-uniqueness argument and splits into two parts:
\begin{enumerate}
\item Proving a relaxed observability inequality 
\bnan
\label{e:general-obs-relaxed}
 \mathfrak{C} \int_0^T \|\obs v(t) \|_{H^1(M)}^2 dt \geq \E_1(v_0 , v_1) - \mathfrak{C}'\E_0(v_0 ,v_1),
\enan
where $\E_0(v_0 ,v_1)= \frac12 \left( \| v_0 \|_{L^2(M)}^2 + \| v_1 \|_{H^{-1}(M)}^2  \right)$ is a weaker energy of the data (or, equivalently, of the state). This estimate translates the high-frequency behavior, and relies on the propagation of singularities for the wave equation (see Corollary \ref{cor:obsHFbis} and Lemma \ref{lm:obsHFbis} for a justification of the terminology ``high-frequency''). That $(\omega,T)$ should satisfy GCC is used in this step.
\item Getting rid of the additional term $\E_0(v_0 ,v_1)$ in~\eqref{e:general-obs-relaxed}. This step relies on unique continuation properties and requires less on $(\omega,T)$.
\end{enumerate}
\medskip
Both parts of the proof rely on the understanding of two fondamental properties of the wave equation: (1) propagation of high-frequencies along the rays of geometric optics, and (2) ``propagation'' of low-frequencies according to the tunnel effect.

In the original proof~\cite{BLR:88,BLR:92}, Step (1) relies on a closed graph theorem and the propagation of wave front sets. A variant of this proof, proposed by Lebeau~\cite{Leb:96}, relies instead on a contradiction argument and the propagation of microlocal defect measures of  G\'erard and Tartar~\cite{Gerard:91,Tartar:90}.
 
After reductio ad absurdum, Step (2) is then equivalent to proving that any solution of~\eqref{e:KG-obs} vanishing on the set $(0,T) \times \omega$ vanishes everywhere. This step can be performed using a global unique continuation results for waves: the global version of the Holmgren theorem, as proved by John~\cite{John:49} in the analytic setting, or by Tataru, Robbiano-Zuily and H\"ormander in~\cite{Tataru:95,RZ:98,Hor:97} in the general case. At the time~\cite{BLR:88,BLR:92} were written, this general unique continuation theorem for waves (under optimal geometric conditions) was not known in the non-analytic case. Bardos-Lebeau-Rauch managed to bypass this argument by using strongly estimate~\eqref{e:general-obs-relaxed} and studying the set of invisible solutions, which then reduces the problem to a classical unique continuation result for eigenfunctions of $L^*$.

\medskip
It is clear from this brief discussion that both steps are highly non constructive, so that the full proof may not seem well-suited for tracking the dependence/robustness of the observability constant $\mathfrak{C}_{obs}$ with respect to parameters (e.g. w.r.t. the observation time $T$, lower order terms added in the operator $L$...).

\medskip
The aim of this paper is to provide with a constructive proof under optimal geometric conditions. For this, we explain:
\begin{itemize}
\item how to replace Step (2) above by the optimal unique continuation estimates obtained by the authors in~\cite{LL:15};
\item on a compact manifold without boundary, how to replace step (1) using the analysis of Dehman and Lebeau~\cite{DL:09}.
\end{itemize}

We illustrate the interest of this approach by keeping track of some parameters in the analysis. Firstly, we give bounds on the blowup of the observability constant $\mathfrak{C}_{obs}$ as a function of the observation time $T$ when it goes to the limit control time associated to the open set $\omega$, namely $T \to T_{GCC}(\omega)^+$. Secondly, we provide with an explicit bound of the dependence of the observability constant $\mathfrak{C}_{obs}$ when adding to the equation a potential, i.e. taking $L = -\Delta  + c(x)$ in~\eqref{e:KG-cont}-\eqref{e:KG-obs}.

We also hope that the method we develop here might be used for other purposes (e.g. inverse problems, data assimilation, big data...) where getting uniform estimates might be of importance.

\subsection{Main results}
Before stating our results, let us recall some geometric definitions needed to formulate them (see also Appendix~\ref{app:geom}). For $E \subset M$, we define ``the largest distance from $E$ to a point in $M$'' by
\bnan
\label{e:def-L}
\mathcal{L}(M ,E):= \sup_{x_1 \in M}  \dist(x_1 ,  E)  .
\enan
We shall also use the notation
\bnan
\label{e:def-TUC}
T_{UC}(E) = 2 \mathcal{L}(M ,E) ,
\enan
which, in case $E$ is open, is the minimal time of unique continuation for the wave equation from the set $E$ (see~\cite{John:49} in the analytic setting
or~\cite{Tataru:95,RZ:98,Hor:97} in the general case). In turn, it also provides the optimal time of approximate controllability from the open set $E$.

Assume for a while that $\d M = \emptyset$.
According to~\cite{RT:74,BLR:92}, given an open set $\omega$ and a time $T>0$, we say $(\omega ,T)$ satisfies GCC if 
\bna
\text{for any } \rho \in S^*M, \text{ there exists } t \in(0,T) \textnormal{ so that }\pi(\varphi_t (\rho)) \in\omega ,
\ena
where,  $\varphi_t$ is the geodesic flow on $S^*M$ and $\pi$ the canonical projection $S^*M \to M$ (see Appendix~\ref{app:geom}). We also say that $\omega$ satisfies GCC if there is a time $T>0$ such that $(\omega,T)$ does. If $\omega$ satisfies GCC, then we may define the minimal control time associated to $\omega$ by
\bnan
\label{e:def-TGCC}
T_{GCC}(\omega) = \inf\{T>0 , (\omega,T) \text{ satisfies } GCC \} .
\enan
We also have 
\bna
T_{GCC}(\omega) &=& \sup \{\length(\Gamma), \Gamma \textnormal{ geodesic curve on $M$ with } \Gamma \cap \omega=\emptyset\}\\
&=&\inf \{T>0 \textnormal{ such that any geodesic curve $\Gamma$ with } \length(\Gamma)\geq T \text{ satisfies }\Gamma \cap \omega\neq \emptyset\}.
\ena
It can be proved that $T_{GCC}(\omega) \geq T_{UC}(\omega)$ (a proof is given in Appendix~\ref{app:geom} Lemma \ref{lmTUleqTGCC}).
 Given a continuous function $\obs$, we also define the constant
 \bnan
\label{e:def-KT}
\mathfrak{K}(T) = \min_{\rho \in S^*M} \int_0^T \obs^2 \circ \pi  \circ\varphi_t (\rho)dt  ,
\enan
which is the smallest average of the function $\obs^2$ along geodesics of length $T$. With this definition, we also have $T_{GCC}(\omega) = \inf\{T>0 , \mathfrak{K}(T) >0\} =  \sup\{T>0 , \mathfrak{K}(T) = 0\}$, with $\omega : = \{\obs \neq 0\}.$ 

In the case $\d M \neq \emptyset$, one may also define a (continuous) ``broken'' geodesic flow on the appropriate phase space (see~\cite{BLR:92}), and the above definitions still allow to express that $(\omega,T)$ satisfies GCC. When considering the boundary observation/control problem, we need the following definition~\cite{BLR:92}: given $\Gamma \subset \d M$ and $T>0$, we say that $(\Gamma,T)$ satisfy the Geometric Control Condition GCC$_\d$ if every generalized geodesic (i.e. ray of geometric optics) traveling at speed one in $M$ meets $\Gamma$  on a non-diffractive point in a time $t \in (0,T)$.

As already mentioned, we present here two different types of results: first estimate $\mathfrak{C}_{obs}$ as a function of time $T$ when $T\to T_{GCC}^+(\omega)$, and second to estimate $\mathfrak{C}_{obs}$ as a function of the potential $c(x)$, when taking $L = -\Delta  + c(x)$ in~\eqref{e:KG-cont}-\eqref{e:KG-obs}.

\bigskip
Our first results concern, in the case $\d M = \emptyset$, the behaviour of the constant $\mathfrak{C}_{obs}(T)$ as a function of the observation time $T$ when the latter is close to $T_{GCC}(\omega)$.
We first prove that the observability estimate~\eqref{e:general-obs} always fails for the critical time $T=T_{GCC}(\omega)$, and give an explicit blowup rate when $T \to T_{GCC}(\omega)^+$. In all what follows, we assume that $\obs \in \Cinf(M)$ (or, at least $\Con^k(M)$ for some large~$k$).
\begin{theorem}
\label{t:lower-obs}
Assume that $\d M = \emptyset$ and \eqref{e:general-obs} holds for all solutions of \eqref{e:KG-obs} with $L=- \Delta+1$. Then, $\mathfrak{K}(T)>0$ (i.e. $(\omega ,T)$ satisfies GCC) and we have $\mathfrak{C}_{obs}(T) \geq \mathfrak{K}(T)^{-1}$, where $\mathfrak{K}(T)$ is defined in~\eqref{e:def-KT}.
\end{theorem}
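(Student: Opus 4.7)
The plan is to construct a sequence of high-frequency wave packets (Gaussian beams) concentrated on a geodesic along which the average of $\obs^2$ attains the minimum $\mathfrak{K}(T)$, and then to test the observability inequality~\eqref{e:general-obs} against this sequence. Equivalently, one may phrase the argument in the language of semiclassical defect measures for the wave equation.

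First, by compactness of $S^*M$ and continuity of the integrand, I would pick a minimizer $\rho_0 \in S^*M$ of the map $\rho \mapsto \int_0^T \obs^2 \circ \pi \circ \varphi_t(\rho)\, dt$, so that this integral equals $\mathfrak{K}(T)$. Next, by a standard WKB/Gaussian beam construction on the closed manifold $M$, I would build a sequence $(v_h)_{h \to 0^+}$ of solutions to $\d_t^2 v_h + L v_h = 0$ with $L=-\Delta+1$, normalized so that $\E_1(v_h(0), \d_t v_h(0)) = 1$, and whose associated semiclassical defect measure at $t=0$ is the Dirac mass $\delta_{\rho_0}$ on $S^*M$. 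The propagation theorem for semiclassical defect measures (or a direct Gaussian beam computation) then identifies the measure at time $t$ as $\delta_{\varphi_t(\rho_0)}$. Passing to the limit $h \to 0$ in $\int_0^T \|\obs \, v_h(t)\|_{H^1(M)}^2\, dt$ should yield the geodesic average
\[
\lim_{h \to 0} \int_0^T \|\obs \, v_h(t)\|_{H^1(M)}^2\, dt = \int_0^T \obs^2(\pi(\varphi_t(\rho_0)))\, dt = \mathfrak{K}(T).
\]
Inserting this into~\eqref{e:general-obs} and letting $h \to 0$ gives $\mathfrak{C}_{obs}(T) \cdot \mathfrak{K}(T) \geq 1$. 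Under the assumption $\mathfrak{C}_{obs}(T) < +\infty$, this forces $\mathfrak{K}(T) > 0$, i.e., $(\omega,T)$ satisfies GCC, and rearranging gives the announced lower bound.

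The main technical point to be justified is the above convergence of the observation term. The $H^1$-norm coincides with $\|\sqrt{L}\,\cdot\,\|_{L^2}$; since $\obs \in \Cinf(M)$ is scalar-valued and $\sqrt{L}$ is a first-order pseudodifferential operator with real principal symbol $|\xi|_g$, the commutator $[\sqrt{L}, \obs]$ is of order zero. Hence, in the semiclassical regime,
\[
\|\obs\, v_h(t)\|_{H^1}^2 = \int_M \obs^2 \, |\sqrt{L} v_h(t)|^2\, dx + o(\|v_h(t)\|_{H^1}^2),
\]
and the leading term is captured, after time integration, by the semiclassical measure of $\sqrt{L}v_h$. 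The one subtle step is to control the $o(1)$ remainder uniformly in $t \in [0,T]$; this follows from standard energy estimates for the wave equation together with the symbolic calculus, and is the place where one has to be a little careful.
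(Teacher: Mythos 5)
Your proposal is correct and reaches the same conclusion by essentially the same underlying idea: pick a minimizer $\rho_0$ of the geodesic average, build a high-frequency concentrating sequence at $\rho_0$, show the time-integrated localized energy converges to $\mathfrak{K}(T)$, and plug into the observability inequality. The technical route differs from the paper's: you propagate a Gaussian beam and invoke propagation of (semiclassical) defect measures, whereas the paper never explicitly propagates the data at all. Instead it writes $\int_0^T\|\obs v^k(t)\|_{H^s}^2\,dt = (\G_T \Sigma V^k,\Sigma V^k)$ and appeals to Proposition~\ref{th: HUM operator-waves}, where the Egorov theorem has already been used to show that the Gramian $\G_T$ is, up to a $1$-smoothing remainder, a pseudodifferential operator with principal symbol $\mathrm{diag}\bigl(\int_0^T\obs^2\circ\varphi^-_t\,dt,\int_0^T\obs^2\circ\varphi^+_t\,dt\bigr)$; the desired limit then drops out by testing that fixed operator against the pure sequence of initial data. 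Your approach avoids the Gramian/Egorov machinery entirely, at the cost of having to justify (a) that all the mass of the Gaussian beam sits in a single half-wave mode so that the measure at time $t$ really is $\delta_{\varphi_t(\rho_0)}$ rather than a mixture over $\varphi_{\pm t}(\rho_0)$ --- the analogue of the paper's explicit choice $v_-^k=0$ in the splitting $\Sigma$ --- and (b) dominated or uniform-in-$t$ convergence to pass the limit inside $\int_0^T$. Point (a) deserves an explicit sentence in a complete write-up; (b) is as you say routine. The trade-off is clear: you gain independence from Section 2's Egorov apparatus, while the paper gains a clean statement that encodes the whole time evolution into a single order-zero operator, which it then reuses (via sharp G{\aa}rding) for the upper bound in Proposition~\ref{l:obsHF}.
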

That is to say that the observability constant $\mathfrak{C}_{obs}(T)$ blows up at least like $\mathfrak{K}(T)^{-1}$ as $T \to T_{GCC}(\omega)^+$.

We also obtain an upper bound on this blowup rate. Namely, we shall prove the following uniform observability estimate.
\begin{theorem}[Uniform observation theorem]
\label{th:KG-obs} 
Assume that $\d M = \emptyset$, $\omega = \{ \obs \neq 0\}$ satisfies GCC and that $T_{UC}(\omega) < T_{GCC}(\omega)$. Then, for any $T_1 > T_{GCC}(\omega)$, there exist $C, \kappa >0$ such that for any $T \in  (T_{GCC}(\omega) , T_1]$, any $(v_0,v_1)\in H^1(M) \times L^2(M)$ and $v$ associated solution of~\eqref{e:KG-obs} with $L=- \Delta+1$, we have
\bna
 \E_1(v_0, v_1) \leq Ce^{\kappa \mathfrak{K}(T)^{-1} } \int_0^T \|\obs v (t)\|_{H^1(M)}^2 dt  ,
\ena
where $\mathfrak{K}(T)$ is defined in~\eqref{e:def-KT}.
\end{theorem}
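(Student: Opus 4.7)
The plan is to implement the two-step strategy outlined in the Introduction: first, establish a relaxed observability inequality with explicit dependence on $\mathfrak{K}(T)$ via the Dehman-Lebeau Egorov approach; second, remove the resulting compact remainder by using the quantitative unique continuation estimate of~\cite{LL:15}. The hypothesis $T_{UC}(\omega) < T_{GCC}(\omega)$ is precisely what allows the second step to apply uniformly for $T$ in a right neighborhood of $T_{GCC}(\omega)$.

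For the first step, I would adapt the pseudodifferential argument of~\cite{DL:09}. Decomposing $v$ into its two half-waves and applying Egorov's theorem to conjugate multiplication by $\obs^2$ with the half-wave group $e^{it\sqrt L}$, one obtains
\[
\int_0^T e^{-it\sqrt L}\, \obs^2 \, e^{it\sqrt L}\, dt = \operatorname{Op}(\mathfrak{k}_T) + R,
\qquad
\mathfrak{k}_T(\rho) := \int_0^T (\obs^2 \circ \pi \circ \varphi_t)(\rho)\, dt,
\]
where $R$ is pseudodifferential of order one less than the principal symbol. Since $\mathfrak{k}_T \geq \mathfrak{K}(T)$ on $S^*M$ by~\eqref{e:def-KT}, G{\aa}rding's inequality applied at the energy level then yields the relaxed observability
\[
\mathfrak{K}(T)\, \E_1(v_0,v_1) \leq C \int_0^T \|\obs v(t)\|_{H^1(M)}^2\, dt + C'\, \E_0(v_0,v_1),
\]
with constants $C, C'$ that can be chosen uniform for $T \in [T_{GCC}(\omega), T_1]$, using the uniform upper bound on $\mathfrak{k}_T$.

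For the second step, the assumption $T > T_{GCC}(\omega) > T_{UC}(\omega)$ allows invoking the quantitative unique continuation estimate of~\cite{LL:15}, which provides a logarithmic interpolation inequality of the form
\[
\E_0(v_0,v_1) \leq \frac{C\, \E_1(v_0,v_1)}{\log\bigl( 2 + \E_1(v_0,v_1) \big/ \int_0^T \|\obs v(t)\|_{H^1}^2\, dt \bigr)} ,
\]
uniformly for $T \in [T_{GCC}(\omega), T_1]$. Substituting this bound into the relaxed observability and setting $\eta := \E_1(v_0,v_1) \big/ \int_0^T \|\obs v\|_{H^1}^2\, dt$, we arrive at
\[
\mathfrak{K}(T)\, \eta \leq C + \frac{C''\, \eta}{\log(2+\eta)} .
\]
A dichotomy on $\log(2+\eta)$ concludes: if $\log(2+\eta) \geq 2 C''/\mathfrak{K}(T)$, the remainder is absorbed and $\eta \leq 2C/\mathfrak{K}(T)$; otherwise the reverse inequality forces $\eta \leq e^{\kappa/\mathfrak{K}(T)}$ directly. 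Either way, $\eta \leq C_\star e^{\kappa/\mathfrak{K}(T)}$, which is exactly the estimate claimed.

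The main technical obstacle lies in the first step: making the Egorov calculus rigorous at the energy level, with remainders genuinely of lower order in a scale-invariant sense, and ensuring that the constants in the relaxed observability do not degenerate as $T \to T_{GCC}(\omega)^+$. This requires working carefully with spectral cutoffs so that the lower-order remainders can be matched to the $\E_0$ norm and hence treated by the unique continuation step. The validity of the quantitative unique continuation uniformly for $T \in [T_{GCC}(\omega), T_1]$ is in turn guaranteed by the strict inequality $T_{UC}(\omega) < T_{GCC}(\omega)$ and the stability-in-$T$ features of~\cite{LL:15}.
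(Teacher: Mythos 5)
Your proposal follows essentially the same two-step strategy as the paper's proof: establish a relaxed observability inequality with remainder in a weak energy norm via the Dehman--Lebeau Egorov approach (Propositions~\ref{th: HUM operator-waves} and~\ref{l:obsHF}), then absorb the remainder using the quantitative unique continuation estimate of~\cite{LL:15}, exploiting $T_{UC}(\omega) < T_{GCC}(\omega)$. The only presentational difference is the final absorption step: you reformulate the unique continuation as a logarithmic interpolation inequality and close with a dichotomy on $\log(2+\eta)$, whereas the paper keeps the estimate in the form $Ce^{\kappa\mu}(\cdots)+\tfrac{C}{\mu}(\cdots)$ and optimizes over the free parameter $\mu$; the two are equivalent. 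One small caution: sharp G{\aa}rding naturally produces a remainder $C'\E_{1/2}(v_0,v_1)$ of order $-1/2$, not $\E_0$, so either you upgrade the G{\aa}rding step or (as the paper does) derive a version of the unique continuation estimate at the $\H^{1/2}$ level by interpolating~\eqref{e:LL15}; and recall that~\cite{LL:15} observes $\|v\|_{H^1(\omega_0)}$ rather than $\|\obs v\|_{H^1(M)}$, so one must first shrink to an $\omega_0$ with $\overline{\omega}_0\subset\omega$ and $T_{UC}(\omega_0)<T_{GCC}(\omega)$ (Lemma~\ref{l:omega-0}), and then bound $\|v\|_{H^1(\omega_0)}$ by $\|\obs v\|_{H^1(M)}$ at the end.
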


Using the classical duality argument~\cite{DR:77,Lio:88,Cor:book}, we deduce the following uniform control result.
\begin{corollary}[Uniform control theorem]
\label{th:KG-cont} 
Under the assumptions of Theorem~\ref{th:KG-obs}, for any $T_1 > T_{GCC}(\omega)$, there exist $C, \kappa >0$ such that for any $T \in  (T_{GCC}(\omega) , T_1]$, any $(u_0 ,u_1)\in L^2(M) \times H^{-1}(M)$, there exists $f \in L^2(0,T; H^{-1}(M))$ with 
$$
\| f \|_{L^2(0,T; H^{-1}(M))}^2 \leq  Ce^{\kappa \mathfrak{K}(T)^{-1} } \E_0(u_0 ,u_1) ,
$$
 (where $\mathfrak{K}(T)$ is defined in~\eqref{e:def-KT}) such that the associated solution $u$ of~\eqref{e:KG-cont} satisfies $(u(T),  \d_t u(T)) = (0,0)$.
\end{corollary}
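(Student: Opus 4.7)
The plan is to derive the control $f$ from the observability estimate of Theorem~\ref{th:KG-obs} via the Hilbert Uniqueness Method (HUM) of~\cite{Lio:88,DR:77,Cor:book}, which in the present functional setting identifies the square of the norm of the data-to-control map with the observability constant $\mathfrak{C}_{obs}(T) = C e^{\kappa \mathfrak{K}(T)^{-1}}$. The dependence on $T$ then transports directly from the observability to the controllability statement.

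First, I would set up the transposition identity. Testing~\eqref{e:KG-cont} against a solution $v$ of~\eqref{e:KG-obs} with data $(v_0, v_1) \in H^1(M) \times L^2(M)$, and integrating by parts in space and time (using that $L = -\Delta + 1$ is selfadjoint and $\d M = \emptyset$), one obtains
$$\langle u_0, v_1\rangle_{L^2} - \langle u_1, v_0\rangle_{H^{-1}, H^1} + \langle \d_t u(T), v(T)\rangle - \langle u(T), \d_t v(T)\rangle = \int_0^T \langle \obs f(t), v(t)\rangle_{H^{-1}, H^1}\,dt.$$
Hence $(u(T), \d_t u(T)) = (0, 0)$ is equivalent to a linear constraint on $f$ parametrized by $(v_0, v_1)$, in which the observed quantity $\obs v$ appears naturally as a test function in the $L^2(0,T; H^{-1})$--$L^2(0,T; H^1)$ duality.

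Next I would introduce the HUM functional $J: H^1(M) \times L^2(M) \to \R$,
$$J(v_0, v_1) = \frac{1}{2}\int_0^T \|\obs v(t)\|_{H^1(M)}^2\,dt - \langle u_0, v_1\rangle_{L^2} + \langle u_1, v_0\rangle_{H^{-1}, H^1},$$
and observe that by Theorem~\ref{th:KG-obs} its quadratic part dominates $\mathfrak{C}_{obs}(T)^{-1} \E_1(v_0, v_1)$, so that $J$ is strictly convex and coercive and admits a unique minimizer $(V_0, V_1)$. Comparing $J(V_0, V_1) \leq J(0, 0) = 0$ with Cauchy--Schwarz on the linear part, and reusing observability, would yield
$$\int_0^T \|\obs V(t)\|_{H^1(M)}^2\,dt \leq C\,\mathfrak{C}_{obs}(T)\,\E_0(u_0, u_1).$$

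Finally, I would read off the control from the Euler--Lagrange equation at the minimizer,
$$\int_0^T \langle \obs V, \obs w\rangle_{H^1(M)}\,dt - \langle u_0, w_1\rangle_{L^2} + \langle u_1, w_0\rangle_{H^{-1}, H^1} = 0 \qquad \forall\, (w_0, w_1) \in H^1 \times L^2,$$
and define $f := (1 - \Delta)(\obs V) \in L^2(0, T; H^{-1}(M))$, using the Riesz isomorphism $1 - \Delta: H^1(M) \to H^{-1}(M)$ and the boundedness of multiplication by the smooth function $\obs$ on all Sobolev spaces. The identity $\langle \obs f, w\rangle_{H^{-1}, H^1} = \langle \obs V, \obs w\rangle_{H^1}$ would then show that this Euler--Lagrange condition coincides with the transposition identity expressing $(u(T), \d_t u(T)) = (0, 0)$, and the preceding bound on $\int_0^T \|\obs V\|_{H^1}^2\,dt$ gives the stated norm of $f$. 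I do not expect any genuine obstacle in this scheme --- it is a textbook application of HUM --- and the only point requiring care is to verify that the $L^2(0, T; H^1)$--$L^2(0, T; H^{-1})$ duality and the commutation of multiplication by $\obs$ with this duality are consistent with the observation norm appearing in Theorem~\ref{th:KG-obs}.
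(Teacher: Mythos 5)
Your proof is correct and is precisely the classical HUM duality argument that the paper invokes as a black box (citing~\cite{DR:77,Lio:88,Cor:book}); the paper does not spell out the details you provide, but the route is the same. Your choice $f = (1-\Delta)(\obs V)$, the bound on $\int_0^T\|\obs V\|_{H^1}^2\,dt$ obtained from the Euler--Lagrange equation plus observability, and the identity $\|f\|_{H^{-1}} = \|\obs V\|_{H^1}$ are exactly what transfers the constant $\mathfrak{C}_{obs}(T)=Ce^{\kappa\mathfrak{K}(T)^{-1}}$ from Theorem~\ref{th:KG-obs} to the control cost.
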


Note that from this result and a commutator argument (see~\cite{DL:09,EZ:10}) one may deduce a similar bound on the norm of the control in $L^2(0,T; H^{s-1}(M))$ for data in $H^s(M) \times H^{s-1}(M)$.

In dimension $n \geq 2$, the condition $T_{UC}(\omega) < T_{GCC}(\omega)$ is not very restrictive (and, in particular, is certainly generic with respect to the set $\omega$ or the metric $g$). Indeed, we prove in Section~\ref{s:compare-time} that $T_{UC}(\omega) = T_{GCC}(\omega)$ implies a very specific geometric situation. Roughly speaking, it shows that close to the points where the maximum of $\dist(x,M)$ is reached, $M\setminus \overline{\omega}$ is a closed geodesic balls of radius $T_{UC}(\omega)/2$. A precise statement is given in Lemma \ref{lmTUeqTGCC}.

\bigskip
The estimation of the cost of fast control has already been investigated in several situations: in finite dimension~\cite{Sei:88}, in different situations for the Schr\"odinger equation~\cite{Miller:04shrodinger,Miller:05,Lissy:15}, for the heat equation~\cite{Miller:04heat,Lissy:15}, for the Stokes equation~\cite{CSL:16}.

In all these cases, the equations under study are controllable in any time $T>0$ and the question is about to estimating how the observability constant blows up as $T \to 0^+$. We are not aware of any such results in the case of the wave equation, for which a minimal time is required to have observability.

Note that short time estimates of the control cost for the heat equation are also known to imply uniform estimates of the control for a transport-diffusion equation in the vanishing viscosity limit, see \cite{Lissy:12}. This problem was originally studied by Coron and Guerrero in~\cite{CG:05}. In this context, a minimal time also appears to obtain uniform observability. The question of getting uniform observability in the natural time related to the transport equation remains open.

\bigskip
The above results are particularly simple to write in the case of the wave equation $L=-\Delta$. However, they generalize (with some technicalities, but no additional conceptual difficulty) to wave equations with lower order terms.
In that context, we wish to consider the control problem~\eqref{e:KG-cont} in case the operator $L$ is a general time-dependent perturbation of $-\Delta$, defined by
\bnan
\label{e:def-op-elliptiq}
L u (t,x)= -\Delta u (t,x)+ b_0(t,x)\d_t u(t,x) + \langle d u (t,x), b_1(t,x) \rangle_x +c(t,x) u (t,x),
\enan
where $b_0$ and $c$ are smooth functions on $\R \times M$ and $b_1$ is a smooth time dependent vector field on $M$. Note that we may equivalently rewrite $\langle d u (x), b_1(x) \rangle_x = g_x( \nabla u (x), b_1(x) )$ (see Appendix~\ref{app:geom} for notations).

The adjoint observation problem is~\eqref{e:KG-obs} with
\bnan
\label{e:def-op-elliptiq-adj}
L^* v (t,x)= -\Delta v (t,x) - \ovl{b_0} (t,x) \d_t v (t,x) -  \langle d v (t,x), \ovl{b_1}(t,x) \rangle_x+ \left(\ovl{c}  -\d_t  \ovl{b_0}  - \div( \ovl{b_1})  \right) v (t,x) ,
\enan
and if $u$ is a smooth solution to~\eqref{e:KG-cont}-\eqref{e:def-op-elliptiq} and $v$ a smooth solution to~\eqref{e:KG-obs}-\eqref{e:def-op-elliptiq-adj}, we have the duality identity
\bna
\left[(\d_t u , v)_{L^2(M)} - ( u , \d_t v)_{L^2(M)} + (b_0 u , v)_{L^2(M)}\right]_0^T = \int_0^T (f, \obs v)_{L^2(M)} dt .
\ena

In this general setting, we obtain the same results as in the case $L=L^*=-\Delta$, with an analyticity assumption with respect to time on the coefficients, and where $\mathfrak{K}(T)$ has to be appropriately modified.

\begin{theorem}
\label{t:main-result-lot}
Assume that the coefficients of $b_0$, $c$ and $b_1$ are smooth and depend analytically  on the variable $t$.
Then, the analogues of Theorem~\ref{t:lower-obs} and Theorem~\ref{th:KG-obs} hold for Equation~\eqref{e:KG-obs}
with $L^*$ given by~\eqref{e:def-op-elliptiq-adj} and 
\bnan
\label{e:KTgeneral+}
\mathfrak{K}(T)  =  \min_{\rho \in S^*M} g_T^+(\rho) ,
\enan
where, denoting by $(x(s) , \xi(s)) = \varphi_s (x_0 ,\xi_0)$, we have
$$
g_T^+ (x_0 , \xi_0)=  \int_0^T  \obs^2(x(t))  \exp \left(\int_0^t \Re(b_0)(\tau, x(\tau)) + \left< \frac{\xi(\tau)}{|\xi(\tau)|_{x(\tau)}} , \Re(b_1)(\tau , x(\tau)) \right>_{x(\tau)}  d\tau \right) dt.
$$
\end{theorem}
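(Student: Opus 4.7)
The plan is to follow the architecture of the proofs of Theorems~\ref{t:lower-obs} and~\ref{th:KG-obs}, tracking carefully how the first-order coefficients $b_0,b_1$ affect the propagation of microlocal energy along geodesics. Since the principal symbol of $\d_t^2 + L^*$ remains $-\tau^2 + |\xi|_x^2$, the underlying (broken) geodesic flow is unchanged; what is new is that the subprincipal part $-\ovl{b_0}\d_t - \langle d\cdot , \ovl{b_1}\rangle$ produces a linear amplification/damping along every characteristic. A direct computation on the positive sheet $\tau = |\xi|_x$ shows that the corresponding multiplier acting on the principal microlocal amplitude is exactly $\Re(b_0) + \langle \xi/|\xi|_x, \Re(b_1)\rangle_x$, which upon integration produces the exponential weight in the definition~\eqref{e:KTgeneral+} of $g_T^+$.

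For the analogue of Theorem~\ref{t:lower-obs}, I would construct a sequence of Gaussian-beam (or WKB) solutions $v_n$ of~\eqref{e:KG-obs}-\eqref{e:def-op-elliptiq-adj} concentrated on a fixed bicharacteristic $\rho\in S^*M$. The eikonal equation is unchanged, but the first transport equation for the main amplitude now carries the additional factor above. Normalizing so that $\E_1(v_n(0),\d_t v_n(0))=1$, a standard stationary-phase argument yields $\int_0^T \|\obs v_n(t)\|_{H^1(M)}^2 dt \to g_T^+(\rho)$ as $n\to\infty$; minimizing over $\rho\in S^*M$ gives $\mathfrak{C}_{obs}(T)\geq \mathfrak{K}(T)^{-1}$ and, in particular, forces GCC exactly as in the scalar case.

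For the analogue of Theorem~\ref{th:KG-obs}, I would split the argument in the now standard way. The high-frequency step relies on the Dehman--Lebeau Egorov approach~\cite{DL:09}: for any sequence of solutions with $\E_1=1$ concentrating in frequency, the microlocal defect measure $\nu$ on $(0,T)\times S^*M$ is supported on the characteristic variety and satisfies the transport equation
\bna
(\d_t + H_{|\xi|_x}) \nu = \left( \Re(b_0) + \left\langle \xi/|\xi|_x , \Re(b_1) \right\rangle_x \right) \nu .
\ena
Integrating along bicharacteristics and using GCC on the interval $(0,T)$, one obtains a relaxed observability estimate of the form
\bna
C\int_0^T \|\obs v(t)\|_{H^1(M)}^2 dt \geq \mathfrak{K}(T)\, \E_1(v_0,v_1) - C' \E_0(v_0,v_1) ,
\ena
with constants depending polynomially on $T\in (T_{GCC}(\omega), T_1]$; this already accounts for the $\mathfrak{K}(T)^{-1}$ factor in the high-frequency regime. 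The low-frequency step then consists in removing the remainder $\E_0(v_0,v_1)$ via the optimal quantitative unique continuation theorem of~\cite{LL:15}, which under the assumption $T_{UC}(\omega) < T_{GCC}(\omega)$ gives logarithmic stability at cost $e^{\kappa\mathfrak{K}(T)^{-1}}$. The analyticity of $b_0, b_1, c$ with respect to $t$ is crucial here, as it is precisely what allows the Robbiano--Zuily--H\"ormander-type machinery used in~\cite{LL:15} to cross the level surfaces $\{t=\textnormal{cst}\}$. A standard iteration \`a la Bardos--Lebeau--Rauch then yields the full observability estimate with constant $Ce^{\kappa\mathfrak{K}(T)^{-1}}$.

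The main obstacle is twofold. On the high-frequency side, one must rigorously justify the transport equation above for complex-valued first-order perturbations (via a careful computation of the subprincipal symbol of $\d_t^2 + L^*$), and track all constants in the Egorov argument uniformly as $T\to T_{GCC}(\omega)^+$, so that the prefactor $\mathfrak{K}(T)^{-1}$ is not polluted by hidden $T$-dependence. On the low-frequency side, one must check that the quantitative unique continuation estimate of~\cite{LL:15} is indeed stable under time-analytic perturbations of the operator, and that its cost combines correctly with the high-frequency constant to produce the announced exponential bound.
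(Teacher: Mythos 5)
Your lower-bound sketch is in the spirit of the paper: concentrating sequences along a fixed bicharacteristic $\rho_0$ do indeed probe the value $g_T^+(\rho_0)$, and the paper's actual proof of Theorem~\ref{t:lower-obs} (via its Gramian representation and the symbol $\sigma_0(G_T)$ evaluated at $\rho_0$) is the same idea packaged through the pseudodifferential characterization rather than an explicit transport equation. Likewise your reading of the low-frequency step is correct: the relaxed estimate is upgraded to the full one by inserting the quantitative unique continuation estimate of~\cite{LL:15} (in the form of~\cite[Theorem~6.1]{LL:15}, which is where time-analyticity of $b_0,b_1,c$ is used), then choosing the large parameter $\mu$ of order $\mathfrak{K}(T)^{-1}$. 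One caveat: calling this last absorption a ``standard iteration \`a la Bardos--Lebeau--Rauch'' is misleading --- the whole point is that the paper \emph{replaces} the BLR compactness--uniqueness argument by a direct, quantitative one; there is no iteration or contradiction.

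The genuine gap is in your high-frequency step. You announce the Dehman--Lebeau Egorov approach but then describe a microlocal-defect-measure transport argument. That is the Lebeau-style contradiction argument, not a constructive one: it proves, for each fixed $T$, the existence of \emph{some} constant in the relaxed observability inequality, but it does not by itself produce the sharp, explicit constant $\mathfrak{K}(T)$ (with its exponential weight $g_T^+$) nor the uniform bound on the remainder constant $C'$ as $T \to T_{GCC}(\omega)^+$; these uniform bounds are precisely what Theorem~\ref{th:KG-obs} requires. The paper's proof of Proposition~\ref{l:obsHF-general} proceeds differently: (i) it rewrites the second-order equation as a $2\times 2$ first-order system in $v^\pm = (D_t \pm \Lambda)v$ via the splitting $\widetilde\Sigma$ in~\eqref{e:splitting-sigma-tilde}; (ii) it decouples the two modes up to $1$-smoothing remainders by Taylor's trick (Proposition~\ref{prop:representation}); (iii) it applies the non-autonomous, non-selfadjoint Egorov theorem (Theorem~\ref{theorem: Egorov}) to each decoupled evolution $S_\pm(t,0)$, and it is precisely the non-selfadjointness (the $A_0$ term) that generates the exponential weight in~\eqref{e:symbol-egorov} and hence the factor in $g_T^\pm$; and (iv) it applies the uniform sharp G{\aa}rding inequality of Theorem~\ref{th: sharp garding manifold} to obtain $\int_0^T\|\obs v\|_{H^s}^2\,dt \geq \mathfrak{K}(T)\,\E_s - C_0\,\E_{s-1/2}$ with $C_0$ uniform for $T \in [0,T_1]$. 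Steps (ii) and (iii) are the technical heart of the extension from Klein--Gordon to general lower-order terms and cannot be bypassed; without them you have no pseudodifferential representation of the Gramian and no explicit, uniform constant.
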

In fact, the analogue of Theorem~\ref{t:lower-obs} (lower bound) does not require the analyticity in time of the coefficients. Analyticity in time (on the time interval $[0,T_1]$, where $T_1$ is given in the statement of Theorem~\ref{th:KG-obs}) is however strongly used in the proof of the analogue of Theorem~\ref{th:KG-obs} (upper bound) which relies on the unique continuation argument of~\cite{Tataru:95,RZ:98,Hor:97,LL:15}.
Note that this result would be the same if we replaced the observation equation $\d_t^2 v + L^* v = 0$ by $\d_t^2 v + L v = 0$. Indeed, the symbol $g_T^+ (x, \xi)$ (and thus the constant $\mathfrak{K}(T)$) only depends on $\Re(b_0)$ and $\Re(b_1)$.
Remark also that the damped wave equation corresponds to the case $c=0, b_1= 0$ and $b_0$ real valued.

\bigskip
Let us now consider the problem of obtaining a uniform observability constant $\mathfrak{C}_{obs}$ for perturbations of $- \Delta$ by a potential $c \in L^\infty(M)$. Here, we no longer assume that $M$ has no boundary, and our result work for boundary observation as well. 
In this setting, Dehman-Ervedoza~\cite{DE:14} proved that the constant $\mathfrak{C}_{obs}$ remains uniformly bounded for $\|c\|_{L^\infty}$ bounded. Here, we give an explicit bound. The purpose of the following results is to explicitly establish this bound.
We have a rough result for general potentials, and a refined one in case $c \in L^\infty_{\delta}(M)$, where
\bnan
L^\infty_{\delta}(M)= \left\{c \in L^\infty (M ; \R), \delta \|u\|_{L^2(M)}^2 \leq   \int_M |\nabla u|^2 + c|u|^2 , \text{ for all } u \in H^1_0(M) \right\} , \quad \text{for } \delta \geq 0 .
\enan
Remark that functions in $L^\infty_{\delta}(M)$ are real-valued. In case $\d M = \emptyset$, $H^1_0(M)$ stands for $H^1(M)$.

\begin{theorem}
\label{t:dep-pot}
Assumes that $(\omega, T)$ satisfies GCC, \resp that $(\Gamma, T)$ satisfies GCC$_{\d}$. Then, for any $c \in L^\infty(M)$, any $V_0 =  (v_0, v_1) \in H^1_0(M) \times L^2(M)$, and associated solution $v$ of
 \begin{equation}
\label{e:waves-b-eq-c}
\begin{cases}
\d_t^2 v -\Delta v + c(x)v = 0 , \\
v|_{\d M} =  0  , \quad \text{if } \d M \neq \emptyset \\
(v(0), \d_t v(0)) = (v_0, v_1) ,
\end{cases} 
\end{equation}
we have the estimates
\bnan
\label{e:waves-b-obs-c}
 \mathfrak{C}_{obs}  \int_0^T \| v(t) \|_{H^1(\omega)}^2 dt \geq \E_1(V_0) ,  
\enan
\resp  , 
\bnan
\label{e:waves-bb-obs-c}
 \mathfrak{C}_{obs}  \int_0^T \| \d_\nu v(t) \|_{L^2(\Gamma)}^2 dt \geq \E_1(V_0) . 
\enan
with $\mathfrak{C}_{obs} = \mathfrak{C}_{obs}(\|c\|_{L^\infty})$ where  $\mathfrak{C}_{obs}(r)= C\exp( \exp (C \sqrt{r}))$.

 If $c \in L^\infty_\delta(M)$, $\delta>0$, then $\mathfrak{C}_{obs} =  \mathfrak{C}_{obs}(\delta, \|c\|_{L^\infty})$ where $\mathfrak{C}_{obs}(\delta, r) = C\exp (C(1 + \delta^{-1/2})r)$ (where the constant $C>0$ does not depend on $\delta, r$).
\end{theorem}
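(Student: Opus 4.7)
The plan is to follow the two-step BLR strategy used throughout the paper and to make every constant explicit in $\|c\|_{L^\infty}$ at every step.

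\textbf{Step 1 (relaxed observability with tracked constants).} First I would establish a relaxed observability of the form
\begin{equation*}
\E_1(V_0) \leq \mathfrak{C}_1(\|c\|_{L^\infty}) \int_0^T \mathcal{O}(v)^2 \, dt + \mathfrak{C}_2(\|c\|_{L^\infty}) \E_0(V_0),
\end{equation*}
where $\mathcal{O}(v)$ denotes the internal or boundary observation. In the boundary-less case, this is exactly the content of (a version of) Theorem~\ref{t:main-result-lot} with $b_0=b_1=0$ and a time-independent potential. Since $c$ is of order zero, it enters the Dehman--Lebeau Egorov scheme only through commutators $[P,\op(a)]$ whose norms are controlled by $\|c\|_{L^\infty}$ (times a symbol norm of $a$); running these estimates over a bounded time window $[0,T]$ gives $\mathfrak{C}_1,\mathfrak{C}_2$ at most polynomial in $\|c\|_{L^\infty}$. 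For the boundary case, one uses the same Egorov argument along the broken bicharacteristic flow, the bound on $\|c\|_{L^\infty}$ entering in the same way.

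\textbf{Step 2 (quantitative absorption of the weak norm).} To remove $\E_0(V_0)$, I would apply the quantitative unique continuation estimate of~\cite{LL:15} to~\eqref{e:waves-b-eq-c}. Under $T > T_{GCC} \geq T_{UC}$, this yields a logarithmic stability estimate
\begin{equation*}
\E_0(V_0) \leq \Lambda \log\!\left(2 + \frac{\E_1(V_0)}{\int_0^T \mathcal{O}(v)^2 dt }\right)^{-\mu} \E_1(V_0),
\end{equation*}
with $\mu>0$ and $\Lambda = C e^{C\sqrt{\|c\|_{L^\infty}}}$; the exponent $\sqrt{r}$ arises from the Carleman calibration $\tau \sim \|c\|_{L^\infty}^{1/2}$ needed to dominate the potential by the principal part $\tau^2 |d\psi|^2$. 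Inserting this estimate into Step~1 and solving the resulting implicit inequality (inverting the $\log^{-\mu}$ factor and absorbing) produces a constant of the form $C\exp\!\bigl(\exp(C\sqrt{\|c\|_{L^\infty}})\bigr)$: the inner exponential is the one from~\cite{LL:15}, the outer exponential is produced by inverting the logarithm.

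\textbf{Step 3 (coercive case $c \in L^\infty_\delta$).} Here $-\Delta + c$ is self-adjoint and coercive with lower bound $\delta>0$. The conserved energy $\frac{1}{2}(\|\partial_t v\|_{L^2}^2 + \|\nabla v\|_{L^2}^2 + (cv,v)_{L^2})$ is then equivalent to $\E_1$ with constants of the form $1+\delta^{-1/2}\|c\|_{L^\infty}^{1/2}$ (coming from the Poincaré-type inequality associated with the coercive form). Using the conservation of this energy, one can completely bypass the logarithmic unique continuation step: the relaxed observability of Step~1 (with $\|c\|_{L^\infty}$-polynomial constants) combined with the coercive energy equivalence directly absorbs $\E_0(V_0)$, giving the improved single-exponential bound $C\exp\!\bigl(C(1+\delta^{-1/2})\|c\|_{L^\infty}\bigr)$.

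\textbf{Main obstacle.} The delicate point is Step~2, namely turning the log-type quantitative unique continuation of~\cite{LL:15} into a genuine absorption of $\E_0(V_0)$ in the relaxed observability, and checking that the iteration or implicit-inequality argument does not degrade the $\sqrt{r}$ exponent inside the outer exponential. A further technical subtlety is to run this scheme uniformly in the boundary case, where one must combine Egorov propagation along broken geodesics with the boundary version of~\cite{LL:15}, both requiring care near diffractive points.
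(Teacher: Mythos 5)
Your proposal departs from the paper on both the high-frequency and the low-frequency steps, and both departures contain genuine gaps. In Step~1 you propose to re-run the Dehman--Lebeau Egorov scheme with the potential $c$ absorbed into the lower-order terms, including ``along the broken bicharacteristic flow'' in the boundary case, and you assert polynomial bounds in $\|c\|_{L^\infty}$ for the resulting constants. This cannot work as stated: $c\in L^\infty(M)$ is not a smooth symbol, so the pseudodifferential calculus behind the Egorov Theorem~\ref{theorem: Egorov} does not apply; the paper develops Egorov only when $\d M=\emptyset$, and a uniform Egorov theorem along broken bicharacteristics is a hard result which the paper precisely avoids by declaring at the start of Section~\ref{s:potential} that it uses the BLR observability as a black box; and even setting these aside, the solution operator after splitting obeys only an exponential bound $\nor{\mathscr{S}(t,s)}{}\lesssim e^{C\sqrt{\|c\|_{L^\infty}}\,|t-s|}$ (modified energy estimate of~\cite{DZZ:08}), so the Gramian cannot have constants polynomial in $\|c\|_{L^\infty}$. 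The paper instead obtains the relaxed inequality by an elementary perturbation: setting $z=w-v$ with $w$ the free wave with the same data, $z$ solves $\Box z = cv$ with vanishing Cauchy data, and standard hyperbolic energy (or hidden regularity in the boundary case) plus BLR observability for $w$ yield $\int_0^T\nor{v}{H^1(\omega)}^2\,dt\geq C_0\,\E_1(V_0)-C\nor{cv}{L^2L^2}^2$. This works with boundary and for rough $c$ and is the crucial structural simplification you are missing.

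In Step~3 the error is conceptual. Equivalence of the conserved energy $\E_c$ with $\E_1$ (up to $\delta$-dependent constants) gives no comparison between $\E_1(V_0)$ and $\E_0(V_0)$, so the relaxed observability inequality cannot ``directly absorb $\E_0(V_0)$'' using energy conservation alone; for data constant in $x$ one has $\E_0\sim\E_1$ and absorption is impossible without a unique continuation input. If your Step~3 were correct it would already trivialise the case $c=0$, which it does not. The paper's proof of the $L^\infty_\delta$ case still uses a quantitative unique continuation estimate (Corollary~\ref{thmestimbis}), but with a constant $C(1+\delta^{-1/2})$ in place of $Ce^{C\sqrt{\|c\|_{L^\infty}}}$, the gain coming from better energy and duality estimates under coercivity (Lemmas~\ref{lmNRJ} and~\ref{lminegTE}), not from eliminating unique continuation. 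A minor inaccuracy in Step~2: the $\sqrt{r}$ in the inner constant does not come from the Carleman calibration (the constraint from~\cite{LL:15} is $\mu\gtrsim\|c\|_{L^\infty}^{2/3}$), but from the modified energy bounds $\nor{(u,\d_t u)(t)}{\H^j}\lesssim e^{C|t|\sqrt{\|c\|_{L^\infty}}}\nor{(u,\d_t u)(0)}{\H^j}$ used to pass from the space-time norms of Theorem~\ref{thmobserwave-bis} to the energies $\E_0,\E_1$ at time zero.
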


Even the refined estimate in the case $c \in L^\infty_\delta(M)$ does not reached  the general conjecture of Duyckaerts-Zhang-Zuazua~\cite{DZZ:08}, being that $\mathfrak{C}_{obs}(r)$ should be of the form $C\exp (C r^\frac23)$ for all $c \in L^\infty (M)$ in dimension $n \geq 2$.
However, whereas the $C\exp (C r^\frac23)$ bound is proved in~\cite{DZZ:08} in case $(\omega,T)$ satisfy a mutliplier-type condition, to our knowledge, Theorem~\ref{t:dep-pot} is the first explicit bound under the sole GCC condition. We also refer to~\cite{Zua:93} for the dependence w.r.t. potentials in dimension one.

As can be seen in the proof, the loss with respect to the expected exponent is probably due to the rough energy estimates we perform and the use of the high and low-frequency results as black boxes.

A modification of the rough argument in the general case should probably allow to prove similar results for potentials $c \in L^d(M)$, for the unique continuation estimate of~\cite{LL:15} also holds for such potentials (using the rough Sobolev estimate $\|cu\|_{L^2} \leq \|c\|_{L^d} \|u\|_{H^1}$ in the proofs of that reference).

\subsection{Idea of the proof and plan of the article}
All proofs of the present paper rely on the optimal quantitative unique continuation results proved by the authors in~\cite{LL:15}. To explain the spirit of the proof, let us formulate a typical instance of this result (see~\cite[Theorem~1.1]{LL:15}) in the case $L=L^* =-\Delta$.
\begin{theorem}[Quantitative unique continuation for waves]
\label{thmobserwaveintro}
For any nonempty open subset $\omega_0$ of $M$ and any $T> T_{UC}(\omega_0)$, there exist $C, \kappa ,\mu_0>0$ such that for any $(v_0,v_1)\in H^1 (M)\times L^2(M)$ and associated solution $v \in C^0(0,T; H^1(M))$ of \eqref{e:KG-obs}, for any $\mu\geq \mu_0$, we have
\bnan
\label{e:LL15}
\nor{(v_0,v_1)}{L^2(M)\times H^{-1}(M)}\leq C e^{\kappa \mu}\nor{v}{L^2(0,T;H^1(\omega_0))}+\frac{1}{\mu}\nor{(v_0,v_1)}{H^1(M)\times L^2(M)}.
\enan
\end{theorem}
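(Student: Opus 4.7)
The plan is to combine local Carleman estimates for the wave operator $P = \partial_t^2 - \Delta$ with a geometric iteration exploiting the hypothesis $T > T_{UC}(\omega_0) = 2\mathcal{L}(M,\omega_0)$. This is the natural route to a quantitative counterpart of the Tataru--Robbiano--Zuily--Hörmander unique continuation theorem; the split right-hand side, with exponentially large cost $Ce^{\kappa\mu}$ against polynomial gain $1/\mu$, encodes the logarithmic modulus of continuity typical of such unique-continuation-based stability estimates.

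First I would establish a local Carleman estimate for $P$ using a weight $\varphi(t,x)$ whose level sets are strongly pseudoconvex with respect to $P$; the standard choice is $\varphi(t,x) = e^{\lambda \psi(t,x)}$ with $\psi(t,x) = d(x,x_\star)^2 - \beta (t-t_\star)^2$ for some $0 < \beta < 1$. A cutoff manipulation applied to the solution $v$ then yields a local propagation inequality of the form
\begin{equation*}
\|v\|_{L^2(U_2)} \leq C e^{\tau C_1}\|v\|_{L^2(U_1)} + C e^{-\tau C_2}\|v\|_{H^1(U_3)}, \qquad \tau \geq \tau_0,
\end{equation*}
on nested space-time sets $U_1 \Subset U_2 \Subset U_3$ adapted to $\varphi$, with a one-derivative regularity gap between the norm on the left and the norm in the remainder on the right coming from the first-order commutator $[P,\chi]$. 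The condition $T > 2\mathcal{L}(M,\omega_0)$ guarantees that every point $x_\star \in M$ sits at the apex of a backward light cone based at $(T/2, x_\star)$ that fits inside $(0,T) \times M$ and reaches the observation region $(0,T)\times\omega_0$; by compactness, finitely many such cones cover the slice $\{T/2\} \times M$. Iterating the local inequality along a finite chain of overlapping cones, and using energy-conservation-type identities for $P$ to convert space-time norms on a time slice into norms of the initial data, yields
\begin{equation*}
\|(v_0,v_1)\|_{L^2(M)\times H^{-1}(M)} \leq C e^{\kappa\tau}\|v\|_{L^2(0,T;H^1(\omega_0))} + C e^{-\tau/\kappa}\|(v_0,v_1)\|_{H^1(M)\times L^2(M)}.
\end{equation*}
Renaming $\mu = e^{\tau/\kappa}$ replaces $e^{-\tau/\kappa}$ by $1/\mu$ and $e^{\kappa\tau}$ by $\mu^{\kappa^2}$, which is dominated by $C e^{\kappa'\mu}$ for large $\mu$; after relabeling constants this is exactly the inequality stated.

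The main obstacles are of two kinds. Geometrically, constructing the chain of pseudoconvex weights whose associated light cones remain inside $(0,T) \times M$ is delicate precisely because $T_{UC}(\omega_0)$ is the critical threshold; the distance function $d(\cdot,\omega_0)$ is only Lipschitz, so a Whitney-type smoothing and a careful handling of the cut locus are needed, together with a finite covering argument ensuring the number of iterations — and hence the compounded exponential constant — depends only on the geometry of $(M,g,\omega_0,T)$. Analytically, the crucial point is that the norm on the left-hand side \emph{must} be genuinely weaker than the norm in the remainder on the right: otherwise for $\mu$ large the remainder could be absorbed into the left-hand side, yielding a full observability estimate, which would contradict the necessity of the geometric control condition (recall that in general $T_{UC}(\omega_0) < T_{GCC}(\omega_0)$). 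Tracking this one-derivative loss consistently along the whole Carleman iteration, while keeping the exponential cost compounded by only a bounded factor, is the technical heart of the argument.
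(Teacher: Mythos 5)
This theorem is not proved in the present paper: it is quoted verbatim from the authors' earlier work \cite[Theorem~1.1]{LL:15}, and the surrounding text explicitly credits the proof to that reference. So there is no in-paper proof to compare against; what one can assess is whether your sketch would reproduce the cited result.

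There is a genuine gap, and it is precisely the point emphasized in the paper's discussion of the history of this estimate. Your plan is the classical Carleman-iteration scheme: local Carleman estimates with a strongly pseudoconvex weight $\varphi = e^{\lambda\psi}$, $\psi(t,x) = d(x,x_\star)^2 - \beta(t-t_\star)^2$ with $0 < \beta < 1$, cutoff and commutator manipulations, iteration along overlapping cones, and the change of parameter $\mu = e^{\tau/\kappa}$. This is exactly the strategy of Robbiano~\cite{Robbiano:95} and Phung~\cite{Phung:10}. The difficulty is that the strict pseudoconvexity of such $\Cinf$ weights for the wave operator forces $\beta < 1$, so the regions swept by the iteration are \emph{strictly subluminal} cones: the propagation speed you get from the Carleman chain is $\sqrt{\beta} < 1$, and sending $\beta \to 1$ destroys pseudoconvexity. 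Consequently this method only yields the stability estimate for $T$ ``sufficiently large'' (Robbiano, Phung), not for the optimal $T > T_{UC}(\omega_0) = 2\mathcal{L}(M,\omega_0)$. Your geometric claim that the cones ``reach'' $(0,T)\times\omega_0$ under the hypothesis $T > 2\mathcal{L}(M,\omega_0)$ implicitly assumes propagation at speed one, which the $\Cinf$ Carleman machinery does not provide.

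The missing idea, which is the actual technical heart of \cite{LL:15} (as of the qualitative precursor of Tataru~\cite{Tataru:95} and Robbiano--Zuily~\cite{RZ:98}, Hörmander~\cite{Hor:97}), is the exploitation of \emph{partial analyticity in time}: the wave operator $\partial_t^2 - \Delta$ has coefficients analytic in $t$, and one uses a Fourier--Bros--Iagolnitzer-type transform (or equivalently, Carleman estimates for the operator conjugated by a Gaussian weight on the time-Fourier side) to obtain unique continuation across hypersurfaces pseudoconvex only with respect to the real conormal directions, which allows the light cone to close at speed one. Quantifying this FBI/Gaussian-weight step uniformly in the large parameter, and chaining it while tracking the $(\kappa,\mu)$ dependence, is what produces the estimate in the stated optimal time. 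Without this step your argument cannot reach $T > T_{UC}(\omega_0)$ and would recover, at best, the non-optimal-time results the paper explicitly contrasts with Theorem~\ref{thmobserwaveintro}.

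Two minor remarks. First, the translation $\mu = e^{\tau/\kappa}$ turns the Carleman parameter into the stated $\mu$; this is a well-known bookkeeping step and is correctly described. Second, your argument that the left-hand norm must be strictly weaker than the remainder norm (else one could absorb and contradict the necessity of GCC) is a good sanity check, but it does not substitute for carrying the derivative loss through the missing FBI/analytic-in-time step.
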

In the analytic setting, this result is a global quantitative version of the Holmgren theorem and can be proved with the theory developed by Lebeau in~\cite{Leb:Analytic}. In the $C^\infty$ case, the qualitative unique continuation result in optimal time was proved by Tataru~\cite{Tataru:95} (see also~\cite{RZ:98,Hor:97,Tataru:99} for more general operators). This followed a series of papers:~\cite{RT:72,Lerner:88} in infinite time, and then~\cite{Robbiano:91, Hormander:92}.
Concerning quantitative results, Robbiano~\cite{Robbiano:95} first proved inequality~\eqref{thmobserwaveintro} for $T$ sufficiently large and $Ce^{\kappa \mu}$ replaced by $Ce^{\kappa \mu^2}$. This was improved by Phung \cite{Phung:10} to $C_\eps e^{\kappa \mu^{1+\eps}}$, still in large time. In~\cite{Tatarunotes}, Tataru suggested a strategy to obtain $C_\eps e^{\kappa \mu^{1+\eps}}$ in optimal time (in domains without boundaries). At the same time we proved the above Theorem~\ref{thmobserwaveintro} \cite[Theorem~1.1]{LL:15}, Bosi, Kurylev and Lassas~\cite{BKL:15} obtained $C_\eps e^{\kappa \mu^{1+\eps}}$, still in domains without boundaries (but with constants uniform with respect to the operators involved, for applications to inverse problems). We refer to the introduction of~\cite{LL:15} for a more detailed discussion on this issue. One of the motivations for Theorem~\ref{thmobserwaveintro} is that it provides the cost of approximate controls for waves (see~\cite{Robbiano:95,LL:15}).

One of the advantages of this result is that it is proved via Carleman estimates and hence furnishes computable constants. In particular, a uniform version with respect to lower order terms is also furnished in~\cite{LL:15}, which we shall use for the proof of Theorem~\ref{t:dep-pot}.

\bigskip
With this in hand, the starting point of this paper is a proof of the full observability estimate~\eqref{e:general-obs} from the high-frequency one~\eqref{e:general-obs-relaxed} and~\eqref{e:LL15}. This is the following very basic observation: plugging~\eqref{e:LL15} in~\eqref{e:general-obs-relaxed} yields, for all $\mu \geq \mu_0$,
\bna
\left( 1- \frac{2\mathfrak{C}'}{\mu^2}\right)\E_1(v_0 , v_1) \leq  \mathfrak{C} \int_0^T \|\obs v(t) \|_{H^1(M)}^2 dt  +2\mathfrak{C}' C^2 e^{2\kappa \mu} \int_0^T\nor{v(t)}{H^1(\omega_0)}^2 dt .
\ena
Taking also $\mu \geq \sqrt{2 \mathfrak{C}'}$, this eventually proves~\eqref{e:general-obs} with $\mathfrak{C}_{obs} \simeq  \mathfrak{C} + \mathfrak{C}' C^2 e^{2\kappa \sqrt{2 \mathfrak{C}'}}$, provided that $\ovl{\omega_0} \subset \omega$ and $T_{UC}(\omega_0) \leq T_{GCC}(\omega)$ (which we may always assume, see Appendix~\ref{s:compare-time}).
This directly provides a quantitative treatment of Step (2): passing from the relaxed observability inequality~\eqref{e:general-obs-relaxed} to the full observability inequality~\eqref{e:general-obs}.

\bigskip
On a compact manifold without boundary, we also explain how to prove~\eqref{e:general-obs-relaxed} in a constructive way. This follows the spirit of the paper by Dehman and Lebeau~\cite{DL:09}. We write the observation as
$$
\int_0^T \|\obs v(t) \|_{H^1(M)}^2 dt  =\left( \mathscr{G}_T V_0 , V_0 \right) , \quad V_0 = (v_0 , v_1) ,
$$
where $\mathscr{G}_T$ is the Gramian operator of the control problem. As in~\cite{DL:09}, we prove essentially that $\mathscr{G}_T$ is a pseudodifferential operator of order zero with principal symbol $\sigma_0(\mathscr{G}_T) = \int_0^T \obs^2 \circ \pi  \circ\varphi_t (\rho)dt$. We have $\sigma_0(\mathscr{G}_T) \geq \mathfrak{K}(T)$ uniformly on $S^*M$; the use of the Sharp G{\aa}rding inequality then proves that $\left( \mathscr{G}_T V_0 , V_0 \right) \geq \mathfrak{K}(T) \E_1(V_0)$, modulo lower order terms $C\E_0(V_0)$, which is exactly~\eqref{e:general-obs-relaxed} with $\mathfrak{C}= \frac{1}{\mathfrak{K}(T)}$ and $\mathfrak{C}'= \frac{C}{\mathfrak{K}(T)}$.

\bigskip
The plan of the paper is the following.  Section~\ref{s:estimate-T} is devoted to the study of the limit $T \to T_{GCC}(\omega)^+$. In Section~\ref{s:prelim}, we introduce some notation used throughout the paper. Then, in Section \ref{s:model-case} we perform the high-frequency analysis of a model case, namely the Klein-Gordon equation, corresponding to $L=L^*=-\Delta+1$ (and prove in particular Theorem~\ref{t:lower-obs}). In this case, the proofs are simpler to write, so we chose to expose it separately. Then, we conclude in this case the proof of Theorem~\ref{th:KG-obs} in Section~\ref{s:low-freq-T}. Finally, we consider the general case of Theorem~\ref{t:main-result-lot} in Section~\ref{s:generalcase-T}. Only the high-frequency analysis needs care, for the low-frequency analysis is exactly that of Section~\ref{s:low-freq-T}.

Then, in Section~\ref{s:potential}, we consider the problem of uniform observation with respect to potentials. We first prove the refined low-frequency estimates in this case in Section~\ref{s:potential-lowfreq}.
Second,  we conclude the proof of Theorem~\ref{t:dep-pot} in Section~\ref{s:potential-fullest}, using as a black box the high-frequency estimates of~\cite{BLR:88, BLR:92}.

The article ends with two appendices. Appendix~\ref{s:pseudocalc} concerns general fact on pseudodifferential calculus. It contains in particular a proof of a non-autonomous non-selfadjoint Egorov theorem (Appendix~\ref{s:egorov}), of some smoothing properties of operators (Appendix~\ref{s:smoothing-prop}) and some uniform calculus estimates on compact manifolds (Appendix~\ref{s:unif-pseudo}). The second Appendix~\ref{app:geom} is devoted to geometry and contains some elementary properties of $T_{GCC}(\omega)$ and $T_{UC}(\omega)$ (Appendix~\ref{s:compare-time}).

\section{The observability constant as $T \to T_{GCC}(\omega)^+$}
\label{s:estimate-T}

In all this section, $\d M =\emptyset$. In Sections~\ref{s:model-case} and~\ref{s:low-freq-T}, we first prove Theorems~\ref{t:lower-obs} and~\ref{th:KG-obs}: in these two sections, the operator $L$ is $-\Delta +1$. In Section~\ref{s:generalcase-T}, we then prove their generalization, namely Theorem~\ref{t:main-result-lot}: in that section, $L$ has the general form given in~\eqref{e:def-op-elliptiq-adj}. 
The reason why the analysis is simpler in the Klein-Gordon case is that we have the exact factorization formula, for $\Lambda = (-\Delta+1)^\frac12$,
\bnan
\label{e:factorization}
\d_t^2   -\Delta   + 1 = \d_t^2 - \Lambda^2 = (\d_t - i \Lambda)(\d_t + i \Lambda) .
\enan
Of course, this is not needed (as shown in Section~\ref{s:generalcase-T}) but gives rise to several simplifications. We refer to Remark~\ref{rem:Lambda} concerning the use of an exact square root of $-\Delta+1$.

\subsection{Preliminaries}
\label{s:prelim}
We denote by $(e_j)_{j \in \N}$ a Hilbert basis of eigenfunctions of the Laplace-Beltrami operator, associated to the eigenvalues $(\kappa_j)_{j \in \N}$. In particular, we have $e_j \in \Cinf(M)$, $- \Delta e_j = \kappa_j e_j$, $\kappa_j \geq 0$, and $(e_j , e_k)_{L^2(M)} = \delta_{jk}$.

For $s \in \R$, we shall often use the operator $\Lambda^s = (-\Delta+1)^{\frac{s}{2}} : \Cinf(M) \to \Cinf(M)$, defined spectrally by
$$
\Lambda^s f =\sum_{j \in \N} (\kappa_j + 1)^\frac{s}{2} (f, e_j)_{L^2(M)} e_j , \quad  s\in \R .
$$
By duality, it may be extended by duality as an operator $\Lambda^s  : \D'(M) \to \D'(M)$.
We define the Sobolev spaces 
$$
 H^s(M) =  \{f \in \D'(M) ,\Lambda^s f \in L^2(M) \} , \quad  s\in \R .
$$
and associated norms
\begin{align}
\label{def-Hs}
\|f\|_{H^s(M)}^2 = \| \Lambda^s f \|_{L^2(M)}^2, \qquad 
\|(f , g)\|_{H^s(M) \times H^{\sigma}(M)}^2 = \|f \|_{H^s(M) }^2 +  \|g \|_{H^{\sigma}(M) }^2  .
\end{align}
We also sometimes write $H^s(M;\C^2) = H^s(M)\times H^s(M)$. 
On any $H^\sigma(M)$, $\sigma \in \R$, the operator $\Lambda^s$ is an unbounded selfadjoint operator with domain $H^{\sigma+s}(M)$.
In particular, $\Lambda^s$ is an isomorphism from $H^{\sigma+s}(M)$ onto $H^\sigma(M)$.

Let us also recall that, given an open set $\Omega \subset M$, we may define the local $H^1$-norm on $\Omega$ by
$$
\nor{v}{H^1(\Omega)}^2  = \int_{\Omega} |\nabla v|^2 + |v|^2 dx , \quad \text{ with } |\nabla v|^2(x) = g_x(\nabla v(x) , \nabla v(x)) ,
$$
which, in case $\Omega = M$, is equivalent to the global $H^1$-norm defined by~\eqref{def-Hs}.

We shall also use the energy-spaces $\H^s(M) = H^s (M) \times H^{s-1}(M)$ associated to the energy norms
$$
\|(v_0 , v_1)\|_{\H^s(M)}^2 := \|v_0\|_{H^s}^2 + \| v_1 \|_{H^{s-1}}^2  , \qquad \E_s(v_0 , v_1) := \frac12 \|(v_0 , v_1)\|_{\H^s(M)}^2 .
$$

According to \cite{Seeley:66} (or \cite[Theorem~11.2]{Shubin:01}), we have
$$ë
\Lambda^s \in \Psi_{\phg}^{s} (M) , \quad \text{with} \quad 
\sigma_s(\Lambda^s)(x, \xi) : = \lambda^s(x, \xi)= |\xi|_x^s, \quad (x, \xi) \in T^*M \setminus 0 ,
$$
where all notations are defined in Appendix~\ref{app:geom}.
We denote by $(e^{\pm it\Lambda})_{t \in \R}$ the group of operators acting on $H^s(M)$ generated by $ \pm i \Lambda$. 

We denote by $\varphi_t = \varphi^+_t$ (both notations will be used) the hamiltonian flow of $\lambda(x, \xi)= |\xi|_x$ on $T^*M\setminus 0$, and $\varphi^-_t$ that of $-\lambda$. They are linked by $\varphi^-_t =\varphi^+_{-t} $, according to Lemma~\ref{l:phi+phi-}, but is is convenient to keep two different notations.

We conclude this notation section with the following definition.
\begin{definition}
\label{def:Bo}
Assume we are given $I= \mathcal{I}_1 \times \cdots \times \mathcal{I}_N$ a product of intervals of $\R$ (possibly reduced to a single interval) and $S$ an application from $I$ with value in the set of bounded linear operators acting from a Banach space $B_1$ to another one $B_2$.  We shall say that $S \in \Bo( I ;\L(B_1; B_2))$ if 
\begin{enumerate}
\item there exists $C>0$ such that $\nor{S(t)u}{B_2}\leq C \nor{u}{B_1}$ for any $u\in B_1$ and $t\in I$;
\item for any $j \in \{1, , \cdots , N\}$ and any $(t_1, \cdots , t_{j-1}, t_{j+1}, \cdots , t_N) \in  \mathcal{I}_1 \times \cdots \times \mathcal{I}_{j-1} \times  \mathcal{I}_{j+1} \times \cdots \times \mathcal{I}_N$, the map $t_j \to S(t_1 , \cdots , t_N)u$ is in $\Con^0(\mathcal{I}_j ; B_2)$ for any $u \in B_1$.
\end{enumerate}Similarly, we write $S \in \Bo_{\loc}( I ;\L(B_1; B_2))$ if this estimate is satisfied on any compact set of $I$.
\end{definition}
In the applications, we always have $I \subset \R$ of $I = \mathcal{I} \times \mathcal{I}$ with $\mathcal{I}$ an interval of $\R$, in particular when studying the solution operator associated to a strictly hyperbolic Cauchy problem, see Appendix~\ref{s:pseudo-elementaire}.

Note that if $S \in \Bo( I ;\L(B_1; B_2))$ and $T \in \Bo( I ;\L(B_2; B_3))$, then we have $TS \in \Bo( I ;\L(B_1; B_3))$.

Note also that the space $\Bo( I ;\L(B_1; B_2))$ is not included in $L^{\infty}(I;\L(B_1; B_2))$, for maps in $S \in \Bo( I ;\L(B_1; B_2))$ are not a priori measurable in the Bochner sense. However, for all $u \in B_1$ and $(t_1, \cdots , t_{j-1}, t_{j+1}, \cdots , t_N) \in  \mathcal{I}_1 \times \cdots \times \mathcal{I}_{j-1} \times  \mathcal{I}_{j+1} \times \cdots \times \mathcal{I}_N$ fixed, the partial map $t_j \to S(t_1 , \cdots , t_N)u$ is in $\Con^0(\mathcal{I}_j ; B_2)$ and hence (Bochner) integrable. With a usual abuse of notation, for $T_j \in \mathcal{I}_j$ (and assume $0 \in \mathcal{I}_j$), we shall write $\int_0^{T_j} S(t_1 , \cdots , t_N) dt_j$ the linear map
$$
 u \mapsto \int_0^{T_j} \big(S(t_1 , \cdots , t_N) u \big)dt_j .
$$
Remark then that $(t_1, \cdots , t_{j-1}, T_j , t_{j+1}, \cdots , t_N) \mapsto \int_0^{T_j} S(t_1 , \cdots , t_N) dt_j$ belongs to $\Bo_{\loc}( I ;\L(B_1; B_2))$ if $S$ does. 
Also, we have $\Con^0(I ; \Psi^m_{\phg}(M)) \subset \Bo_{\loc}( I ;\L(H^\sigma(M); H^{\sigma - m}(M)))$, according to Corollary~\ref{cor:unif-bound}.

These facts will be used throughout the section.

\subsection{The high-frequency estimate for the Klein-Gordon equation}
\label{s:model-case}

In the present case of the Klein-Gordon equation, that is~\eqref{e:KG-obs} with $L^* = -\Delta+1$, and in view of the factorization formula~\eqref{e:factorization}, we use the following splitting:
\begin{align}
\label{eq: splitting g}
v_+ = \frac12 \big(  v_0 - i \Lambda^{-1} v_1 \big)  , \qquad  v_- = \frac12 \big(  v_0 + i \Lambda^{-1} v_1 \big) ,
\end{align}
so that 
$$
v_0 = v_+ + v_- , \qquad 
v_1 = i \Lambda(v_+ - v_-)  .
$$

we denote by $\Sigma$ the {\em isomorphism} corresponding to the splitting~\eqref{eq: splitting g}:
\begin{equation*}
\begin{array}{rccc}
\Sigma : & H^s(M) \times H^{s-1}(M)&  \to & H^s(M ) \times H^s(M )\\
&(v_0,v_1) &\mapsto & (v_+ , v_- )  .
\end{array}
\end{equation*}
that is 
\begin{equation}
\label{e:splitting-sigma}
\Sigma  = \frac12\left(
\begin{array}{cc}
1  &- i \Lambda^{-1}  \\
1 & i \Lambda^{-1}
\end{array}
\right) , \qquad 
\Sigma^{-1}  =  \left(
\begin{array}{cc}
1  & 1 \\
i \Lambda & -i \Lambda
\end{array}
\right) .
\end{equation}

Notice that the operator $\Sigma$ is (almost) an {\em isometry} $H^s(M) \times H^{s-1}(M) \to H^s(M) \times H^s(M)$. Indeed, if $(v_+ , v_- ) = \Sigma (v_0,v_1)$, we have
\begin{equation}
\label{e:isometry}
\|(v_0,v_1) \|_{H^s(M) \times H^{s-1}(M)}^2 = \|v_+ + v_- \|_{H^s(M)}^2 + \|v_+ - v_- \|_{H^s(M)}^2 =2 \left( \|v_+   \|_{H^s(M)}^2 + \| v_- \|_{H^s(M)}^2 \right),
\end{equation}
that is
\begin{equation}
\label{e:isometry-energy}
\|v_+   \|_{H^s(M)}^2 + \| v_- \|_{H^s(M)}^2 = \E_s(v_0 ,v_1)= \E_s(\Sigma^{-1}(v_+ , v_-)).
\end{equation}
According to~\eqref{e:factorization}, the expression of the solution of System~\eqref{e:KG-obs} is simply
\begin{align}
\label{eq: HUM Duhamel splitting 1}
v(t) = e^{i t\Lambda} v_+ + e^{- i t\Lambda} v_- .
\end{align}

We can now recall a result of \cite{DL:09} (in a little different context), providing a characterization of the Gramian operator (in the wave splitting~\eqref{eq: splitting g}).

\begin{proposition}
\label{th: HUM operator-waves}
Denoting by $V_0 = (v_0,  v_1) \in H^s(M) \times H^{s-1}(M)$ the initial data for System~\eqref{e:KG-obs}, we have 
\begin{equation}
\label{eq: obs = G_T}
\int_0^T \|\obs v(t)\|_{H^s(M)}^2 dt  = \big(\G_T \Sigma V_0, \Sigma V_0 \big)_{H^s(M) \times H^s(M)} ,
\end{equation}
where
\begin{equation}
\label{eq: G_T}
\G_T = \int_0^T  \left(
\begin{array}{cc}
  e^{-i t \Lambda} B e^{i t \Lambda}  &  e^{-i t \Lambda} B  e^{- i t \Lambda}    \\
  e^{i t \Lambda}  B  e^{i t \Lambda}  & e^{i t \Lambda}  B  e^{- i t \Lambda} 
\end{array}
\right)  dt , \qquad B = \Lambda^{-2s} \obs \Lambda^{2s} \obs .
\end{equation}
Moreover, the operator $\G_T$ can be decomposed as $\G_T = G_T + R_T$ with 
$$
R_T \in \Bo_{\loc} (\R^+; \L(H^\sigma (M;\C^2) ;H^{\sigma +1}(M;\C^2)) , \quad \text{ for all } \sigma \in \R ,
$$
 and $G_T  \in \Cinf ( \R_T ;\Psi_{\phg}^0(M ;\C^{2\times2}))$ has principal symbol
\begin{equation}
\label{eq: sigma_0 (G_T)}
\sigma_0 (G_T) = \left(
\begin{array}{cc}
\int_0^T \obs^2 \circ \varphi_t^- dt  & 0  \\
0 & \int_0^T \obs^2 \circ \varphi_t^+ dt 
\end{array}
\right) \in S_{\phg}^{0}(T^*M , \C^{2 \times 2}). 
\end{equation}
\end{proposition}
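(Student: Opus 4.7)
The first identity~\eqref{eq: obs = G_T} I would establish by direct algebraic expansion. Inserting $v(t) = e^{it\Lambda}v_+ + e^{-it\Lambda}v_-$ from~\eqref{eq: HUM Duhamel splitting 1} into $\|\obs v(t)\|_{H^s(M)}^2 = (\Lambda^{2s}\obs v(t),\obs v(t))_{L^2}$ produces four cross-terms, each of the form $(\Lambda^{2s}\obs e^{\epsilon it\Lambda}v_\epsilon,\obs e^{\eta it\Lambda}v_\eta)_{L^2}$ with $\epsilon,\eta\in\{+,-\}$. Moving the rightmost $\obs$ to the left factor (since $\obs$ is real-valued, hence $L^2$-self-adjoint) and using the identity $\obs\Lambda^{2s}\obs = \Lambda^{2s}B$, the pairing becomes $(B e^{\epsilon it\Lambda}v_\epsilon, e^{\eta it\Lambda}v_\eta)_{H^s}$. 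Finally, moving $e^{\eta it\Lambda}$ to the first slot via its $H^s$-adjoint $e^{-\eta it\Lambda}$ (valid because $\Lambda$ commutes with $\Lambda^s$) yields $(e^{-\eta it\Lambda}B e^{\epsilon it\Lambda}v_\epsilon, v_\eta)_{H^s}$. Summing over $\epsilon,\eta$ and integrating in $t$ reproduces precisely~\eqref{eq: obs = G_T}--\eqref{eq: G_T}.

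For the decomposition $\G_T = G_T + R_T$, I would first observe that $B = \Lambda^{-2s}\obs\Lambda^{2s}\obs$ belongs to $\Psi^0_{\phg}(M)$ with principal symbol $\sigma_0(B) = \obs^2$ (the $\Lambda^{2s}$ factors cancel at principal order). The diagonal entries $e^{-it\Lambda}B e^{it\Lambda}$ and $e^{it\Lambda}B e^{-it\Lambda}$ are in Egorov form. Applying the non-autonomous Egorov theorem of Appendix~\ref{s:egorov} and computing the symbol ODE---the principal symbol $q(t)$ of $Q(t)=e^{-it\Lambda}Be^{it\Lambda}$ satisfies $\dot q = \{q,\lambda\} = -\{\lambda,q\}$ with $q(0)=\obs^2$---identifies it as $\obs^2\circ\varphi_t^-$, and the mirror computation for the other diagonal entry gives $\obs^2\circ\varphi_t^+$. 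Integrating over $[0,T]$ produces the pseudodifferential part $G_T$ with principal symbol~\eqref{eq: sigma_0 (G_T)}, and its smoothness in $T$ follows from the fundamental theorem of calculus applied to a smooth-in-$t$ family in $\Psi^0_{\phg}(M;\C^{2\times 2})$.

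The main obstacle, as I see it, lies in the off-diagonal entries $\int_0^T e^{\mp it\Lambda}B e^{\mp it\Lambda}\,dt$, which do not fit the Egorov framework because the two propagators have the same sign. Here I would exploit the ellipticity of $\Lambda$ via one integration by parts in $t$. Writing
\[
e^{\mp it\Lambda}B e^{\mp it\Lambda} = e^{\mp 2it\Lambda}\,\tilde B_\mp(t), \qquad \tilde B_\mp(t) := e^{\pm it\Lambda}B e^{\mp it\Lambda} \in \Psi^0_{\phg}(M),
\]
the second factor is a smooth-in-$t$ family in $\Psi^0_{\phg}$ by Egorov, and $e^{\mp 2it\Lambda} = \pm(2i\Lambda)^{-1}\partial_t e^{\mp 2it\Lambda}$. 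A single integration by parts produces boundary terms and an interior term involving $\partial_t\tilde B_\mp(t) = i[\Lambda,\tilde B_\mp(t)]$, itself still of order zero by the commutator calculus; all three pieces are multiplied by $\Lambda^{-1}$ on the left, hence belong to $\Psi^{-1}_{\phg}$ and are bounded $H^\sigma\to H^{\sigma+1}$ locally uniformly in $T$. These contributions, together with the $\Psi^{-1}$ remainders generated by the Egorov expansion of the diagonal entries, form the operator $R_T$.

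The principal technical difficulty I anticipate is bookkeeping: checking that every remainder enjoys the claimed continuity in $T$ and the locally-uniform $H^\sigma \to H^{\sigma+1}$ bound expressed through the $\Bo_{\loc}$ framework of Definition~\ref{def:Bo}, and that $G_T$ really depends smoothly on $T$ in the Fréchet topology of $\Psi^0_{\phg}$. Both properties hinge on continuity and smoothness in $t$ of the Egorov and commutator expansions, which are furnished by the non-autonomous Egorov theorem of Appendix~\ref{s:egorov} and the uniform pseudodifferential calculus estimates of Appendix~\ref{s:pseudocalc}; once those are invoked, the remaining arguments are essentially linear.
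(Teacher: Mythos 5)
Your proposal is correct and follows essentially the same structure as the paper's proof: algebraic expansion of $\|\obs v\|_{H^s}^2$ using the $\Sigma$-splitting, Egorov for the diagonal entries, and a one-derivative gain for the off-diagonal (nonresonant) entries. The one place where you diverge from the paper in substance is that the paper treats the off-diagonal entries as a black box by invoking Lemma~\ref{lemma: regularity Hs}, whereas you unpack that lemma via an explicit integration by parts in~$t$. That unpacking is precisely the mechanism behind Lemma~\ref{lemma: regularity Hs} (a lemma borrowed from Dehman--Lebeau), so this is a transparent re-derivation rather than a genuinely different route; it is arguably clearer for a reader encountering the argument for the first time, at the cost of a little bookkeeping that the paper delegates.

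Two minor sign slips in your integration-by-parts step, neither affecting the argument: from $\partial_t e^{\mp 2it\Lambda} = \mp 2i\Lambda e^{\mp 2it\Lambda}$ one gets $e^{\mp 2it\Lambda} = \mp(2i\Lambda)^{-1}\partial_t e^{\mp 2it\Lambda}$ (not $\pm$), and similarly $\partial_t\tilde B_\mp(t) = \pm i[\Lambda,\tilde B_\mp(t)]$. The essential point---that one integration by parts produces a $\Lambda^{-1}$ on the left of boundary terms and of an integrand that is still $\Psi^0_{\phg}$ by commutator calculus, hence overall gains one derivative and lies in $\Bo_{\loc}$---survives unchanged. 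Your symbol transport computation $\dot q = \{q,\lambda\}$ with $q(0)=\obs^2$ giving $q(t)=\obs^2\circ\pi\circ\varphi_t^-$ is correct and matches Remark~\ref{rkegorovsimple}.
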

Note that the Gramian operator $\G_T$ actually depends on the space $H^s(M)$ (even not written in the notation). An interesting fact is that its principal symbol does not depend on $s$.
The result of Proposition~\ref{th: HUM operator-waves} is essentially proved in \cite[Section~4.1]{DL:09} and we reproduce a proof below for the sake of completeness.

\begin{remark}
Note that the operator $B = \Lambda^{-2s} \obs \Lambda^{2s} \obs$ is symmetric on $H^s(M)$ since
$$
(Bg,h)_{H^s(M)}=  \big(\Lambda^s(\Lambda^{-2s} \obs \Lambda^{2s} \obs) g,\Lambda^sh \big)_{L^2 (M)}= (\obs g, \obs h)_{H^s(M)} , \qquad g,h \in H^s(M).
$$
\end{remark}

\begin{proof}[Proof of Proposition~\ref{th: HUM operator-waves}]
We write $\Sigma V_0= (v_+ , v_- )$, $v (t) =  e^{i t \Lambda} v_+ + e^{-i t \Lambda} v_- $ the associated solution, and develop the inner product
\begin{align}
\int_0^T \|\obs v (t)\|_{H^s(M)}^2 dt &= \int_0^T \left( \Lambda^s \obs( e^{i t \Lambda} v_+ + e^{-i t \Lambda} v_-  ),  \Lambda^s \obs( e^{i t \Lambda} v_+ + e^{-i t \Lambda} v_- ) \right)_{L^2(M)} dt \\
&= \int_0^T \left( \Lambda^{- 2s} \obs \Lambda^{2s} \obs( e^{i t \Lambda} v_+ + e^{-i t \Lambda} v_-  ), ( e^{i t \Lambda} v_+ + e^{-i t \Lambda} v_- ) \right)_{H^s(M)} dt .
\end{align}
This directly yields the sought form for the operator $\G_T$ given by~\eqref{eq: G_T}. 
The Egorov Theorem~\ref{theorem: Egorov} (see also Remark \ref{rkegorovsimple}) in the Appendix then implies that 
\begin{equation}
 \left(
\begin{array}{cc}
  \int_0^T  e^{-i t \Lambda} B e^{i t \Lambda}  dt&  0   \\
0  &  \int_0^T e^{i t \Lambda}  B  e^{- i t \Lambda} dt 
\end{array}
\right)   = G_T + R_T^0 ,
\end{equation}
with $G_T \in \Cinf ( \R_T ;\Psi_{\phg}^0(M ;\C^{2\times2}))$ has principal symbol given by~\eqref{eq: sigma_0 (G_T)} and $R_T^0 \in \Bo_{\loc} (\R^+; \L(H^\sigma (M;\C^2) ;H^{\sigma +1}(M;\C^2))$ for all $\sigma \in \R$. Finally, Lemma~\ref{lemma: regularity Hs} implies that 
\begin{equation}
R_T^1 =  \left(
\begin{array}{cc}
0 &   \int_0^T  e^{-i t \Lambda} B e^{- i t \Lambda}  dt   \\
\int_0^T e^{i t \Lambda}  B  e^{i t \Lambda} dt   &  0
\end{array}
\right)   ,
\end{equation}
is also in $\Bo_{\loc} (\R^+; \L(H^\sigma (M) ;H^{\sigma +1}(M))$ for all $\sigma \in \R$, which concludes the proof with $R_T = R_T^0 +R_T^1$.
\end{proof}

As a first consequence of Proposition~\ref{th: HUM operator-waves}, we deduce a proof of Theorem~\ref{t:lower-obs}.

\bnp[Proof of Theorem~\ref{t:lower-obs}]
Let $\rho_0 = (x_0 , \xi_0)\in S^*M$ that realizes the minimum in~\eqref{e:def-KT}, that is,
\bnan
\label{e:choice-rho0}
\mathfrak{K}(T) = \min_{\rho \in S^*M} \int_0^T \obs^2 \circ \pi \circ \varphi_t (\rho)dt =  \int_0^T \obs^2 \circ \pi \circ  \varphi_t (\rho_0)dt .
\enan
Take a local chart $(U_\kappa , \kappa)$ of $M$ such
that $x_0 \in U_\kappa$. We denote by $(y_0 , \eta_0)$ the coordinates
of $\rho_0$ in this chart.
We choose $\psi \in \Cinfc(\R^n)$ such that $\supp(\psi) \subset
\kappa(U_\kappa)$, and $\psi = 1$ in a \nhd of $y_0$. Next we define
$$
w^k (y) = C_0 k^{\frac{n}{4}} e^{i k \varphi(y)}\psi(y), \quad  
\text{with } 
\varphi(y) = y \cdot \eta_0 + i(y-y_0)^2 \text{ and } C_0 >0 .
$$
Setting now 
\begin{align}
\label{eq: sequence concentrating}
v_+^k = \Lambda^{-s} \kappa^* w^k \in \Cinfc(M), 
\end{align}
we have $v_+^k \rightharpoonup 0$ in $H^s(M)$, $\lim_{k \to
  \infty} \|v_+^k\|_{H^s(M)} = 1$ for an appropriate choice of
$C_0$. Moreover, a classical computation on $(w^k)_{k \in \N}$ shows
that $(v_+^k)_{k \in \N}$ satisfies
\begin{equation}
\label{e:Amdm}
\left(A v_+^k , v_+^k \right)_{H^{s}(M)} \to \sigma_{0}(A)(\rho_0) ,  \quad \text{ for all } A \in \Psi_{\phg}^0(M) .
\end{equation}
Next, we set $v_-^k = 0$ for all $k \in \N$, and $V^k = \Sigma^{-1} (v_+^k , v_-^k)  \in H^s(M) \times H^{s-1}(M)$, so that $\E_s(V^k) \to 1$ as $k \to \infty$.
Applying now~\eqref{eq: obs = G_T} to $V^k$, we have  
\bna
\int_0^T \|\obs v^k (t)\|_{H^s(M)}^2 dt  = \big(\G_T \Sigma V^k, \Sigma V^k \big)_{H^s(M) \times H^s(M)} ,
 \ena
where $v^k(t)$ is the solution to System~\eqref{e:KG-obs} with initial data $V^k$.
Proposition~\ref{th: HUM operator-waves} and~\eqref{e:Amdm} also imply
\begin{align*}
\lim_{k \to \infty}
 \big(\G_T \Sigma V^k, \Sigma V^k \big)_{H^s(M) \times H^s(M)} 
&= \lim_{k \to \infty}
 \big((G_T + R_T) \Sigma V^k, \Sigma V^k \big)_{H^s(M) \times H^s(M)} \\
 &= \lim_{k \to \infty}
 \big( G_T  \Sigma V^k, \Sigma V^k \big)_{H^s(M) \times H^s(M)} \\
& = \int_0^T \obs^2 \circ \pi \circ \varphi_t (\rho_0) dt = \mathfrak{K}(T) ,
\end{align*}
where we used that $R_T$ is $1$-smoothing, that $G_T \in \Psi_{\phg}^0(M)$ has principal symbol given by~\eqref{eq: sigma_0 (G_T)}, and the choice of $\rho_0$ in~\eqref{e:choice-rho0}.
Finally using the assumed observability estimate~\eqref{e:general-obs} with $V^k$, and taking the limit $k \to \infty$ yields
$$
 \mathfrak{C}_{obs}(T) \mathfrak{K}(T)  \leftarrow  \mathfrak{C}_{obs}(T)  \int_0^T \|\obs v^k(t) \|_{H^s(M)}^2 dt \geq \E_s(V^k) \to 1 .
$$
This implies $\mathfrak{C}_{obs}(T) \geq \mathfrak{K}(T)^{-1}$, and concludes the proof of Theorem~\ref{t:lower-obs}.
\enp
\begin{remark}
Note that~\eqref{e:Amdm} translates the fact that the sequence $(v_+^k)_{k \in \N}$ is a pure sequence admitting the $H^s$-microlocal defect
measure $\delta_{\rho =\rho_0}$ in the sense of~\cite{Gerard:91,Tartar:90}. Similarly, the $H^s$-microlocal defect
measure of the sequence $(V^k)_{k \in \N}$ is 
\begin{align*}
\mu =
\left(
\begin{array}{cc}
\delta_{\rho_0} &0 \\
0&0 \\
\end{array}
\right) .
\end{align*}
\end{remark}

As a second consequence of Proposition~\ref{th: HUM operator-waves}, we also obtain the following high-frequency observability inequality.

\begin{proposition}
\label{l:obsHF}
For any $T_0>0$, there exists a constant $C_0>0$ such that for all $T \in [0,T_0]$, for all $V_0 =(v_0, v_1) \in H^s(M)\times H^{s-1}(M)$ and associated solution $v$ of~\eqref{e:KG-obs}, we have
\begin{equation}
\label{eq: obs HF}
\int_0^T \|\obs v (t)\|_{H^s(M)}^2 dt  \geq \mathfrak{K}(T)  \E_s(V_0) -C_0 \E_{s-1/2}(V_0),
\end{equation}
where $\mathfrak{K}(T)$ is defined by~\eqref{e:def-KT} and $L^*=-\Delta+1$.
\end{proposition}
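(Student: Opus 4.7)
The plan is to apply matrix-valued sharp Gårding to the main piece $G_T$ of the Gramian $\G_T$ provided by Proposition~\ref{th: HUM operator-waves}, and absorb everything else into the loss $\E_{s-1/2}(V_0)$. I set $U := \Sigma V_0 = (v_+,v_-)$, so that the isometry~\eqref{e:isometry-energy} yields $\|U\|_{H^s(M;\C^2)}^2 = \E_s(V_0)$ and, applied at the regularity $s-1/2$, also $\|U\|_{H^{s-1/2}(M;\C^2)}^2 = \E_{s-1/2}(V_0)$. By Proposition~\ref{th: HUM operator-waves},
$$
\int_0^T \|\obs v(t)\|_{H^s(M)}^2 \, dt =  (\G_T U,U)_{H^s \times H^s} = (G_T U,U)_{H^s \times H^s} +(R_T U,U)_{H^s \times H^s} .
$$

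For the main term, I would first check that the diagonal symbol given in~\eqref{eq: sigma_0 (G_T)} satisfies $\sigma_0(G_T)(\rho) \geq \mathfrak{K}(T)\, I_{\C^2}$ for every $\rho \in S^*M$. The $(+,+)$-entry is exactly bounded below by $\mathfrak{K}(T)$ by definition~\eqref{e:def-KT}. For the $(-,-)$-entry, I use Lemma~\ref{l:phi+phi-}, namely $\varphi_t^- = \varphi_{-t}^+$, and the change of variable $\rho \mapsto \varphi_{-T}^+(\rho)$ (which preserves $S^*M$) to rewrite $\int_0^T \obs^2 \circ \pi \circ \varphi_t^-(\rho)\,dt = \int_0^T \obs^2 \circ \pi \circ \varphi_t^+(\varphi_{-T}^+\rho)\,dt$, whose minimum over $S^*M$ is again $\mathfrak{K}(T)$. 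Since $G_T - \mathfrak{K}(T) I$ is then an operator in $\Psi^0_\phg(M;\C^{2\times 2})$ with non-negative (diagonal) principal symbol on $T^*M\setminus 0$, I transport to $L^2$ via $\tilde P_T := \Lambda^s(G_T - \mathfrak{K}(T) I)\Lambda^{-s}$ and $\tilde U = \Lambda^s U$ (these have the same principal symbol), and apply matrix-valued sharp Gårding to obtain
$$
(G_T U,U)_{H^s\times H^s} - \mathfrak{K}(T) \|U\|_{H^s\times H^s}^2 = (\tilde P_T \tilde U,\tilde U)_{L^2\times L^2} \geq -C \|\tilde U\|_{H^{-1/2}}^2 = -C \|U\|_{H^{s-1/2}\times H^{s-1/2}}^2 .
$$

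For the remainder, since $R_T \in \Bo_{\loc}(\R^+;\L(H^{s-1/2}(M;\C^2); H^{s+1/2}(M;\C^2)))$, splitting the $H^s$-inner product as $(R_T U,U)_{H^s} = (\Lambda^{s+1/2} R_T U, \Lambda^{s-1/2} U)_{L^2}$ and applying Cauchy--Schwarz gives $|(R_T U,U)_{H^s\times H^s}| \leq C\|U\|_{H^{s-1/2}\times H^{s-1/2}}^2$. Summing the two bounds and using the two isometries above yields exactly~\eqref{eq: obs HF}. The uniformity of $C_0$ over $T\in[0,T_0]$ follows from the fact that $G_T \in \Cinf(\R_T;\Psi^0_\phg)$ and $R_T \in \Bo_{\loc}$, combined with the uniform calculus estimates of Appendix~\ref{s:unif-pseudo}. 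The main subtle point I expect is this last uniformity, together with keeping the matrix-valued sharp Gårding constant independent of $T$; this is ensured by the continuous dependence of the full symbol of $G_T$ on $T$ (hence on a finite number of seminorms controlled uniformly on $[0,T_0]$), rather than any additional input.
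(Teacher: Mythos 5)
Your proposal is correct and follows essentially the same route as the paper: write $(\G_T U,U)_{H^s} = (G_T U,U)_{H^s} + (R_T U,U)_{H^s}$, bound the remainder via the $1$-smoothing estimate and Cauchy--Schwarz, and apply the uniform sharp G{\aa}rding inequality to $G_T - \mathfrak{K}(T)\id$ after checking its principal symbol is nonnegative. The only cosmetic deviation is your verification that the $\varphi^-$-entry of $\sigma_0(G_T)$ is bounded below by $\mathfrak{K}(T)$ via a time-shift change of variables ($\rho \mapsto \varphi_{-T}^+\rho$), whereas the paper invokes Corollary~\ref{e:moy-=moy+}, which relies on the reflection involution $\sigma(x,\xi)=(x,-\xi)$; both are correct and equivalent.
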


\begin{proof}[Proof of Proposition~\ref{l:obsHF}]
We first write $\Sigma V_0 = (v_+ , v_-  ) = V$, and use \eqref{eq: obs = G_T}. We have 
\begin{equation}
\label{e:decomp1}
\big(\G_T V, V  \big)_{H^s(M ; \C^2)} = \big(G_T V, V  \big)_{H^s(M ; \C^2)}  + \big( R_T V, V  \big)_{H^s(M ; \C^2)} .
\end{equation}
Using that $R_T \in \Bo_{\loc} (\R^+; \L(H^{s-1/2} (M;\C^2) ;H^{s+1/2}(M;\C^2))$, we have
\begin{equation}
\label{e:decomp2}
\big( R_T V, V  \big)_{H^s(M;\C^2)} \leq  \| R_T V \|_{H^{s+ 1/2}(M;\C^2)}  \| V \|_{H^{s-1/2}(M;\C^2)} 
\leq C_T   \| V \|_{H^{s-1/2}(M;\C^2)}^2 ,
\end{equation}
where $C_T$ is bounded on compact time intervals.

Next, according to~\eqref{eq: sigma_0 (G_T)}, the principal symbol of the operator $G_T  - \mathfrak{K}(T)\id \in \Psi_{\phg}^0(M; \C^{2\times2})$ is
$$
\sigma_0(G_T - \mathfrak{K} (T)\id) = 
 \left(
\begin{array}{cc}
\int_0^T \obs^2 \circ \varphi_t^- dt - \mathfrak{K}(T)  & 0  \\
0 & \int_0^T \obs^2 \circ \varphi_t^+ dt  - \mathfrak{K} (T)
\end{array}
\right) ,
$$
which is diagonal with nonnegative components since, according to Corollary~\ref{e:moy-=moy+}, we have 
$$
\mathfrak{K}(T) = \min_{\rho \in S^*M} \int_0^T \obs^2 \circ \varphi_t (\rho) dt =  \min_{\rho \in S^*M} \int_0^T \obs^2 \circ \varphi_t^- (\rho) dt .
$$
Using the G{\aa}rding inequality of Theorem~\ref{th: sharp garding manifold} gives the existence of $C>0$ such that, for all $V \in H^s(M ; \C^2)$ all $T \in [0,T_0]$,
\begin{align}
\label{eq: garding hum 2}
\left((G_T - \mathfrak{K}(T)\id) V , V \right)_{H^s(M ; \C^2)} \geq - C \|V\|_{H^{s-1/2}(M; \C^2)}^2 .
\end{align}
Combining~\eqref{e:decomp1}, \eqref{e:decomp2} and \eqref{eq: garding hum 2} now yields the existence of $C>0$ such that, for all $V \in H^s(M ; \C^2)$ all $T \in [0,T_0]$,
$$
\big(\G_T V, V  \big)_{H^s(M) \times H^s(M)} \geq 
\mathfrak{K}(T) \|V \|_{H^s(M ; \C^2)}^2  - C \|V\|_{H^{s-1/2}(M; \C^2)}^2 
$$
Recalling \eqref{e:isometry-energy} that $\|V\|_{H^\sigma(M ; \C^2)}^2 = \E_\sigma(V_0)$ concludes the proof of~\eqref{eq: obs HF}.
\end{proof}

\bigskip
To conclude this section, we explain the terminology ``high-frequency observability estimates''. 
Let first $T_0 > T_{GCC}(\omega)$ be fixed and denote by $C_0>0$ the associated constant given by Proposition~\ref{l:obsHF}. We define the following $T$-dependent subset of $\H^s$ by
\bna
\H^s_{HF}(T)=\left\{V_0\in \H^s , \quad \E_{s-\frac12}(V_0) \leq  \frac{\mathfrak{K}(T)}{4C_0}  \E_{s}(V_0) \right\}.
\ena
Note that this space is nonlinear, however homogeneous, in the sense that $V_0 \in \H^s_{HF}(T) \implies \R V_0 \in \H^s_{HF}(T)$.
Remark also that $\H^s_{HF}(T)=\left\{0\right\}$ if $T\leq T_{GCC}(\omega)$, since $\mathfrak{K}(T)=0$ in this case.
We may now formulate an immediate corollary of Proposition~\ref{l:obsHF}, only consisting in a rewriting of that statement for data in $\H^s_{HF}(T)$, yielding a full observability inequality.
\begin{corollary}
\label{cor:obsHFbis}
For all $V_0 =(v_0, v_1) \in \H^s_{HF}(T)$ and associated solution $v$ of~\eqref{e:KG-obs}, we have
\begin{equation}
\label{eq: obs HFbis}
\int_0^T \|\obs v (t)\|_{H^s(M)}^2 dt  \geq  \frac{\mathfrak{K}(T)}{2} \E_s(V_0) .
\end{equation}
\end{corollary}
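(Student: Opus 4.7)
The plan is extremely short: this corollary is a direct insertion of the membership condition defining $\H^s_{HF}(T)$ into the relaxed observability estimate of Proposition~\ref{l:obsHF}, so essentially no new argument is needed beyond bookkeeping. There is no genuine obstacle; the statement is a rewriting tailored to the ``high-frequency'' subset.

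Concretely, I would start from Proposition~\ref{l:obsHF} applied with the constant $C_0$ fixed in the paragraph preceding the definition of $\H^s_{HF}(T)$. For $V_0 \in \H^s_{HF}(T)$ and $v$ the associated solution of~\eqref{e:KG-obs}, this gives
\begin{equation*}
\int_0^T \|\obs v(t)\|_{H^s(M)}^2 \, dt \;\geq\; \mathfrak{K}(T)\, \E_s(V_0) - C_0\, \E_{s-1/2}(V_0).
\end{equation*}
Then I would invoke the defining inequality of $\H^s_{HF}(T)$, namely $\E_{s-1/2}(V_0) \leq \frac{\mathfrak{K}(T)}{4 C_0} \E_s(V_0)$, to bound the lower-order term: $C_0 \E_{s-1/2}(V_0) \leq \frac{\mathfrak{K}(T)}{4} \E_s(V_0)$. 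Substituting this into the previous display yields
\begin{equation*}
\int_0^T \|\obs v(t)\|_{H^s(M)}^2 \, dt \;\geq\; \mathfrak{K}(T)\, \E_s(V_0) - \frac{\mathfrak{K}(T)}{4} \E_s(V_0) \;=\; \frac{3\mathfrak{K}(T)}{4}\, \E_s(V_0) \;\geq\; \frac{\mathfrak{K}(T)}{2}\, \E_s(V_0),
\end{equation*}
which is the claimed inequality~\eqref{eq: obs HFbis}.

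The only minor point worth mentioning is consistency of the constants: the $C_0$ appearing in the definition of $\H^s_{HF}(T)$ must be the very one delivered by Proposition~\ref{l:obsHF} for the chosen $T_0 > T_{GCC}(\omega)$ with $T \in [0,T_0]$. Since the paper fixes $T_0 > T_{GCC}(\omega)$ and then defines $\H^s_{HF}(T)$ precisely with that $C_0$, this matches automatically. Note also that when $T \leq T_{GCC}(\omega)$ the corollary is vacuous, because $\mathfrak{K}(T) = 0$ forces $\H^s_{HF}(T) = \{0\}$, for which both sides vanish; thus the statement is nontrivial only in the range $T > T_{GCC}(\omega)$.
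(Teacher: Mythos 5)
Your proof is correct and is precisely the intended argument; the paper itself states the corollary without a separate proof, noting it is ``only consisting in a rewriting'' of Proposition~\ref{l:obsHF} for data in $\H^s_{HF}(T)$. You even observe that the argument actually yields the slightly stronger constant $\tfrac{3}{4}\mathfrak{K}(T)$, which is consistent and fine.
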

Finally, the following Lemma states that data spectrally supported at high-frequency (in terms of the spectral theory of $-\Delta$) are in $\H^s_{HF}(T)$. As such, they satisfy the full observability inequality~\eqref{eq: obs HFbis}.
\begin{lemma}
\label{lm:obsHFbis}
Denoting by 
$$
F_{\kappa}^s = \left\{V_0 \in \H^s , \Pi_\kappa V_0 = 0 \right\} , \quad \text{with} \quad \Pi_\kappa(v_0, v_1) = \left( \sum_{\kappa_j \leq \kappa} (v_0 , e_j)_{L^2(M)} e_j, \sum_{\kappa_j \leq \kappa} (v_1 , e_j)_{L^2(M)} e_j\right) ,
$$
we have
\bna
\kappa \geq \left(  \frac{4 C_0}{\mathfrak{K}(T)} \right)^2 - 1 \quad \Longrightarrow \quad  F_{\kappa}^s \subset \H^s_{HF}(T) .
\ena
\end{lemma}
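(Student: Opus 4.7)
The plan is to expand the data in the eigenbasis of $-\Delta$ and read off the inclusion from the spectral characterization of the Sobolev norms together with the definition of $\H^s_{HF}(T)$. The whole argument reduces to a one‑line bound on the ratio $\E_{s-1/2}(V_0)/\E_s(V_0)$ for data spectrally supported on eigenvalues larger than $\kappa$.

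More precisely, writing $V_0=(v_0,v_1)$ with $v_0=\sum_j a_j e_j$ and $v_1=\sum_j b_j e_j$, the spectral definition of $\Lambda^\sigma$ gives, for every $\sigma\in\R$,
\bna
2\E_\sigma(V_0) = \sum_j (\kappa_j+1)^\sigma |a_j|^2 + \sum_j (\kappa_j+1)^{\sigma-1}|b_j|^2 = \sum_j (\kappa_j+1)^{\sigma-s}\bigl[(\kappa_j+1)^s|a_j|^2+(\kappa_j+1)^{s-1}|b_j|^2\bigr] .
\ena
Taking $\sigma=s-1/2$, the factor in front of the bracket becomes $(\kappa_j+1)^{-1/2}$. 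Now the hypothesis $V_0\in F_\kappa^s$ means exactly that $a_j=b_j=0$ whenever $\kappa_j\leq \kappa$, so only indices with $\kappa_j>\kappa$ contribute to the sums. For such indices $(\kappa_j+1)^{-1/2}\leq (\kappa+1)^{-1/2}$, and therefore
\bna
\E_{s-1/2}(V_0)\leq (\kappa+1)^{-1/2}\,\E_s(V_0).
\ena

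Plugging in the assumption $\kappa\geq (4C_0/\mathfrak{K}(T))^2-1$ yields $(\kappa+1)^{-1/2}\leq \mathfrak{K}(T)/(4C_0)$, hence $\E_{s-1/2}(V_0)\leq \tfrac{\mathfrak{K}(T)}{4C_0}\E_s(V_0)$, which is exactly the defining condition of $\H^s_{HF}(T)$. There is no real obstacle here: the statement is a purely spectral bookkeeping once one observes that the Sobolev norms are diagonal in the basis $(e_j)$, and the numerical threshold on $\kappa$ is designed precisely so that the $(\kappa+1)^{-1/2}$ gap between $\E_{s-1/2}$ and $\E_s$ beats the factor $\mathfrak{K}(T)/(4C_0)$ appearing in the definition of $\H^s_{HF}(T)$.
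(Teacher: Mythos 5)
Your proof is correct and follows essentially the same argument as the paper: expand in the eigenbasis, use that $F_\kappa^s$ kills the modes with $\kappa_j\leq\kappa$, bound the extra factor $(\kappa_j+1)^{-1/2}$ by $(\kappa+1)^{-1/2}$, and invoke the threshold on $\kappa$. The only cosmetic difference is that you factor out $(\kappa_j+1)^{\sigma-s}$ explicitly before specializing to $\sigma=s-1/2$, whereas the paper compares the two sums directly; the content is identical.
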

When doing this, notice that we compare the typical frequency $\kappa^\frac12$ to the blow up of the observation $\mathfrak{K}(T)^{-1}$. We recall that $ \left(\frac{4C_0}{\mathfrak{K}(T)}\right)^2\underset{T\rightarrow T_{GCC}^+(\omega)}{\longrightarrow} +\infty$. 

\begin{proof}
If $(u, v) \in F_{\kappa}^s$, then we have $\Pi_\kappa(u, v)=0$ so that, with $u_j = (u , e_j)_{L^2(M)}$ and $v_j = (v , e_j)_{L^2(M)}$, we obtain
\bna
2 \E_{s-\frac12}(u,v) &= & \sum_{\kappa_j > \kappa} (\kappa_j + 1)^{s-\frac12} |u_j|^2 + (\kappa_j + 1)^{s-\frac32} |v_j|^2 \\
& \leq & (\kappa  + 1)^{-\frac12} \sum_{\kappa_j > \kappa} (\kappa_j + 1)^{s } |u_j|^2 + (\kappa_j + 1)^{s-1} |v_j|^2 \\
& \leq & (\kappa  + 1)^{-\frac12} 2 \E_{s}(u,v) .
\ena
If now $\kappa +1 \geq \left(  \frac{4 C_0}{\mathfrak{K}(T)} \right)^2$, this directly implies $(u,v) \in \H^s_{HF}(T)$.
\end{proof}

\subsection{The full observability estimate}
\label{s:low-freq-T}
Once the high-frequency observability estimate is proved, it remains to say something on the low-frequencies, i.e. remove the term
$\E_{s-1/2}(V_0)$ in the right hand-side of~\eqref{eq: obs HF} for general data (as opposed to the result in Corollary~\ref{cor:obsHFbis}). This is based on~\cite{LL:15}. We only use the case $s=1$ in~\eqref{eq: obs HF} to which~\cite{LL:15} is more adapted.
As a corollary of Theorem~\ref{thmobserwaveintro} (i.e.~\cite[Theorem~1.1]{LL:15}), we have the following intermediate estimates.

 \begin{corollary}
 Let $\omega_0$ be an open set of $M$ and fix $T_0 > T_{UC}(\omega_0)$. Then, there exist $\kappa ,\mu_0>0$ such that 
for any $s\in [0,1)$ there is $C>0$ such that for all $(v_0,v_1)\in \H^1 (M)$ and associated solution $v \in \Con^0(0,T; H^1(M))$ of \eqref{e:KG-obs}, for any $T\geq T_0$ and $\mu\geq \mu_0^{1-s}$, we have 
\bna
\nor{(v_0,v_1)}{\H^{s}(M)}\leq C C_s(\mu) \nor{v}{L^2(0,T;H^1(\omega_0))}+\frac{C}{\mu}\nor{(v_0,v_1)}{\H^1(M)},   
\ena
with $C_s(\mu) = \mu^{\frac{s}{1-s}} e^{\kappa \mu^{\frac{1}{1-s}}}$. 
In particular, for any $T\geq T_0$ and $\mu\geq \mu_0^{1/2}$, we have
\bnan
\label{observweak12}
\nor{(v_0,v_1)}{\H^{1/2}(M)} \leq C\mu e^{\kappa \mu^2}\nor{v}{L^2(0,T;H^1(\omega_0))}+\frac{C}{\mu} \nor{(v_0,v_1)}{\H^1(M)}.
\enan
\end{corollary}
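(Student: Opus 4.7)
The plan is to deduce this quantitative $\mathcal{H}^s$ estimate from the $\mathcal{H}^0$ estimate of Theorem~\ref{thmobserwaveintro} by a standard interpolation coupled with Young's inequality, where crucially the parameter $\mu$ in Theorem~\ref{thmobserwaveintro} is not the same $\mu$ that appears in the conclusion; it will be rescaled as a power of it.

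\medskip

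\textbf{Step 1 (spectral interpolation).} First I record the interpolation inequality
\begin{equation*}
\nor{(v_0,v_1)}{\H^s(M)} \leq \nor{(v_0,v_1)}{\H^0(M)}^{1-s}\nor{(v_0,v_1)}{\H^1(M)}^{s},
\end{equation*}
which follows at once from $\nor{u}{H^s}\leq\nor{u}{L^2}^{1-s}\nor{u}{H^1}^{s}$ and $\nor{w}{H^{s-1}}\leq\nor{w}{H^{-1}}^{1-s}\nor{w}{L^2}^{s}$ (themselves obtained by H\"older's inequality on the eigenbasis $(e_j)$), combined with the discrete H\"older inequality $\sum a_i^{1-s}b_i^s\leq (\sum a_i)^{1-s}(\sum b_i)^{s}$ applied to the two pieces of the $\H^s$-norm.

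\medskip

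\textbf{Step 2 (substitute Theorem~\ref{thmobserwaveintro} with an auxiliary parameter).} Pick $\mu'\geq\mu_0$, to be optimized later. Applying Theorem~\ref{thmobserwaveintro} with parameter $\mu'$ yields
\begin{equation*}
\nor{(v_0,v_1)}{\H^{0}(M)}\leq C e^{\kappa \mu'}\nor{v}{L^2(0,T;H^1(\omega_0))}+\frac{1}{\mu'}\nor{(v_0,v_1)}{\H^1(M)} .
\end{equation*}
Inserting this bound into the interpolation inequality of Step 1 and using the elementary $(a+b)^{1-s}\leq a^{1-s}+b^{1-s}$ (valid since $1-s\in(0,1]$), I get
\begin{equation*}
\nor{(v_0,v_1)}{\H^{s}} \leq C e^{\kappa\mu'(1-s)}\nor{v}{L^2(0,T;H^1(\omega_0))}^{1-s}\nor{(v_0,v_1)}{\H^1}^{s}+\frac{C}{(\mu')^{1-s}}\nor{(v_0,v_1)}{\H^1} .
\end{equation*}

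\medskip

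\textbf{Step 3 (Young's inequality with a tunable weight).} The first term on the right is a product of the form $A\cdot B$ with $A=e^{\kappa\mu'(1-s)}\nor{v}{}^{1-s}$ and $B=\nor{(v_0,v_1)}{\H^1}^{s}$. I apply Young's inequality with conjugate exponents $p=\frac{1}{1-s}$, $q=\frac{1}{s}$ and a free weight $\delta>0$:
\begin{equation*}
AB \leq (1-s)\,\delta^{1/(1-s)} A^{1/(1-s)} + s\,\delta^{-1/s} B^{1/s} .
\end{equation*}
I choose $\delta$ so that $s\,\delta^{-1/s}=C/\mu$, i.e.\ $\delta=(s\mu/C)^{s}$, so that $\delta^{1/(1-s)}\lesssim\mu^{s/(1-s)}$. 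This produces a bound of the form
\begin{equation*}
AB \leq C\,\mu^{s/(1-s)}\,e^{\kappa\mu'}\nor{v}{L^2(0,T;H^1(\omega_0))} + \frac{C}{\mu}\nor{(v_0,v_1)}{\H^1} .
\end{equation*}

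\medskip

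\textbf{Step 4 (choice of $\mu'$ and conclusion).} To absorb the remaining low-order error $\tfrac{C}{(\mu')^{1-s}}\nor{(v_0,v_1)}{\H^1}$ into $\tfrac{C}{\mu}\nor{(v_0,v_1)}{\H^1}$, I enforce $(\mu')^{1-s}\geq \mu$, and the optimal choice is $\mu'=c\,\mu^{1/(1-s)}$ for a suitable constant $c>0$. The admissibility constraint $\mu'\geq\mu_0$ then reduces to $\mu\geq\mu_0^{1-s}$ (up to absorbing constants). Inserting this value of $\mu'$ into the first term yields $e^{\kappa\mu'}=e^{\kappa c\mu^{1/(1-s)}}$, so that the whole estimate becomes
\begin{equation*}
\nor{(v_0,v_1)}{\H^{s}} \leq C\,\mu^{s/(1-s)} e^{\kappa\mu^{1/(1-s)}}\nor{v}{L^2(0,T;H^1(\omega_0))} + \frac{C}{\mu}\nor{(v_0,v_1)}{\H^1},
\end{equation*}
which is the claimed estimate with $C_s(\mu)=\mu^{s/(1-s)}e^{\kappa\mu^{1/(1-s)}}$. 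Specializing to $s=1/2$ gives $C_{1/2}(\mu)=\mu\,e^{\kappa\mu^{2}}$ and the second displayed inequality.

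\medskip

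There is no real obstacle here beyond bookkeeping; the only subtle point is that the exponential is worsened from $e^{\kappa\mu}$ (what naive interpolation would give) to $e^{\kappa\mu^{1/(1-s)}}$ precisely because the rescaling $\mu'\simeq\mu^{1/(1-s)}$ is forced by the need to control the low-order remainder $\tfrac{1}{(\mu')^{1-s}}\nor{\cdot}{\H^{1}}$ by $\tfrac{1}{\mu}\nor{\cdot}{\H^{1}}$.
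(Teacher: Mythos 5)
Your proof is correct and follows essentially the same route as the paper's: both hinge on the spectral interpolation $\nor{V_0}{\H^s}\leq\nor{V_0}{\H^0}^{1-s}\nor{V_0}{\H^1}^{s}$, a weighted Young inequality with conjugate exponents $\tfrac1{1-s},\tfrac1s$, substitution of the $\H^0$ estimate from Theorem~\ref{thmobserwaveintro}, and the rescaling $\mu'\simeq\mu^{1/(1-s)}$ that produces the $e^{\kappa\mu^{1/(1-s)}}$ exponent; the paper merely applies Young before substituting the $\H^0$ bound, whereas you substitute first and then apply Young, which commutes. One small remark: in Step~4 you should take the free constant $c=1$ (i.e.\ $\mu'=\mu^{1/(1-s)}$ exactly, as the paper does), since an $s$-dependent $c$ would spoil the uniformity of $\kappa$ over $s\in[0,1)$ that the statement asserts.
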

 \bnp
 We denote by $V_0 = (v_0 , v_1)$ all along the proof.
 Using an interpolation estimate and Young inequality, with $\eta>0$, we obtain for $C>0$ (depending on $s$) 
 \bna
\nor{V_0}{\H^{s}(M)} \leq C \nor{V_0}{\H^0(M)}^{1-s}\nor{V_0}{\H^1(M)}^{s} \leq C (1-s)\eta^{-1/(1-s)}\nor{V_0}{\H^0(M)}+ Cs\eta^{1/s}\nor{V_0}{\H^1(M)} .
\ena
Then using~\eqref{e:LL15} for $T = T_0 > T_{UC}(\omega_0)$ yields, for $\mu \geq \mu_0$,
\bna
\nor{V_0}{\H^s(M)}  \leq C \eta^{-1/(1-s)}\left[C_0(\mu) \nor{v}{L^2(0,T_0;H^1(\omega_0))} + \frac{1}{\mu} \nor{V_0}{\H^1(M)}\right]+C\eta^{1/s}\nor{V_0}{\H^1(M)} 
 \ena
Now, we take $\eta$ such that $\eta^{\frac{1}{s(1-s)}}=\frac{1}{\mu}$, implying, for $\mu \geq \mu_0$,
 \bna
\nor{V_0}{\H^s(M)} \leq C  \mu^{s}C_0(\mu) \nor{v}{L^2(0,T_0;H^1(\omega_0))} + \frac{C}{\mu^{1-s}}\nor{V_0}{\H^1(M)}.
 \ena 
 Finally, writing $\tilde{\mu}=\mu^{1-s}$, there is $C>0$ (depending on $s$) such that for any $\tilde{\mu} \geq \mu_0^{1-s}$, we have
  \bna
\nor{V_0}{\H^s(M)} \leq C \tilde{\mu}^{\frac{s}{1-s}}C_0(\tilde{\mu}^{\frac{1}{1-s}})\nor{v}{L^2(0,T_0;H^1(\omega_0))} + \frac{C}{\tilde\mu} \nor{V_0}{\H^1(M)} .
 \ena
 Using that $\nor{v}{L^2(0,T_0;H^1(\omega_0))} \leq \nor{v}{L^2(0,T;H^1(\omega_0))} $ for all $T\geq T_0$ concludes the proof of the corollary.
 \enp
 
 We can now conclude the proof of the main theorem in the model case of the Klein Gordon equation by combining the high-frequency estimate~\eqref{eq: obs HF} and the low-frequency estimate~\eqref{observweak12}.

\bnp[Proof of the observability Theorem~\ref{th:KG-obs}]
First, according to Lemma~\ref{l:omega-0} and the assumption $T_{UC}(\omega) < T_{GCC}(\omega)$, there is an open subset $\omega_0$ of $M$ such that 
$$
\ovl{\omega}_0 \subset \omega ,  \quad \text{and} \quad T_{UC}(\omega_0) < T_{GCC}(\omega) .
$$
We now choose $T_0$, so that we have
$$
0< T_{UC}(\omega)  \leq T_{UC}(\omega_0) < T_0 < T_{GCC}(\omega) < T_1
$$
(note that the assumption $T_{UC}(\omega) < T_{GCC}(\omega)$ implies $T_{GCC}(\omega)>0$ and hence $\ovl{\omega} \neq M$ and hence $T_{UC}(\omega_0) \geq T_{UC}(\omega)  >0$).

The high-frequency estimate~\eqref{eq: obs HF} for $s=1$, yields the existence of $C_0>0$ such that for all $T \in [0, T_1]$, $V_0 = (v_0,v_1)$, and associated solution $v \in \Con^0(0,T; H^1(M))$ of \eqref{e:KG-obs}, we have
\begin{equation}
\label{eq: obs HF bis}
\int_0^T \|\obs v (t)\|_{H^1(M)}^2 dt  \geq \mathfrak{K}(T) \E_1(V_0) - C_0 \E_{1/2}(V_0) .
\end{equation}
The low-frequency estimate~\eqref{observweak12} (squared) gives the existence of $C,\kappa, \mu_0 >0$, such that one has
\bnan
\label{observweak12bis}
 \E_{1/2}(V_0) \leq C\mu^2 e^{2\kappa \mu^2} \int_0^T\nor{v}{H^1(\omega_0)}^2 dt+\frac{C}{\mu^2} \E_1(V_0) ,
\enan
for all $\mu \geq \mu_0^\frac12$ and all $T\geq T_0$. These last two estimates yield, for any $\mu \geq \mu_0$ and $T \in [T_0 ,T_1]$ (the constant $C>0$ may change from line to line, but remains uniform with respect to the parameters $T$ and $\mu$),
\bna
\int_0^T \|\obs v (t)\|_{H^1(M)}^2 dt  \geq \mathfrak{K}(T) \E_1(V_0) - C \left( e^{\tilde{\kappa} \mu } \int_0^T\nor{v}{H^1(\omega_0)}^2 dt+\frac{1}{\mu} \E_1(V_0) \right),
\ena
that is 
\bna
\int_0^T \|\obs v (t)\|_{H^1(M)}^2 dt  + Ce^{\tilde{\kappa} \mu } \int_0^T\nor{v}{H^1(\omega_0)}^2 dt
\geq \left(\mathfrak{K}(T)- \frac{C}{\mu} \right) \E_1(V_0) .
\ena
Assuming now that $T>T_{GCC}(\omega)$, we have $\mathfrak{K}(T) >0$, and may choose $\mu = \max \left\{ \frac{2C}{\mathfrak{K}(T)}, \mu_0 \right\}$ to obtain, for some $\kappa^*>0 , C>0$
\bnan
\label{eq: obs HF ter}
\int_0^T \|\obs v (t)\|_{H^1(M)}^2 dt  + Ce^{\kappa^* \max \{\mathfrak{K}(T)^{-1} , \mu_0 \} } \int_0^T\nor{v}{H^1(\omega_0)}^2 dt 
\geq  \frac{\mathfrak{K}(T)}{2}  \E_1(V_0) .
\enan
Note that until this point, we did not use the assumptions on the relative location of the sets $\omega_0$ and $\omega = \{ \obs \neq 0\}$ (except that $T_{UC}(\omega_0) < T_{GCC}(\omega)$).

Finally, using that $\overline{\omega}_0 \subset \omega = \{ \obs \neq 0\}$, we have $|\obs| \geq c_0^{-1}>0$ on $\omega_0$ and $1 = \frac{\obs}{\obs}$ on this set, so that
\bna
\nor{v}{H^1(\omega_0)}^2 & =& \int_{\omega_0} |\nabla v|^2 + |v|^2 dx = 
\int_{\omega_0} |\nabla  (\obs^{-1} \obs v)|^2 + |\obs^{-1} \obs v|^2 dx \\
&\leq & \int_{\omega_0} |\nabla \obs^{-1}|^2 | \obs v|^2 + c_0^2 \int_{\omega_0} |\nabla  ( \obs v)|^2 + | \obs v|^2 dx \\
&\leq &C \int_{\omega_0} |\nabla ( \obs v)|^2 + | \obs v|^2 dx  \leq C \nor{ \obs v}{H^1(M)}^2 .
\ena
As a consequence, coming back to~\eqref{eq: obs HF ter}, there is $C, \kappa' >0$ such that for all $T \in (T_{GCC}(\omega) , T_1]$, we have
\bna
 Ce^{\kappa' \mathfrak{K}(T)^{-1} } \int_0^T \|\obs v (t)\|_{H^1(M)}^2 dt  
\geq   \E_1(V_0) ,
\ena
which concludes the proof of Theorem~\ref{th:KG-obs}.
\enp

\subsection{The high-frequency estimate in the general case}
\label{s:generalcase-T}
We now consider the general case of the observability problem~\eqref{e:KG-obs}-\eqref{e:def-op-elliptiq-adj} (dual to the controllability problem~\eqref{e:KG-cont}-\eqref{e:def-op-elliptiq}), and give a proof of Theorem~\ref{t:main-result-lot}.
We only provide below the high-frequency part of the analysis. 
The analogue of Theorem~\ref{t:lower-obs} (the lower bound) directly  follows (and does not require the analyticity of the coefficients). 
Concerning the analogue of Theorem~\ref{th:KG-obs} (the upper bound), its low-frequency part uses~\cite[Theorem~6.1]{LL:15} (instead of Theorem~\ref{thmobserwaveintro} which only deals with $L^*=-\Delta$), which requires the coefficients to be analytic in time. The proof of the full observability estimate from the high-frequency one then follows Section~\ref{s:low-freq-T}, without any modification. 

The main purpose of the following subsection is therefore the proof of Proposition \ref{l:obsHF-general} below, which is the generalization of Proposition \ref{l:obsHF}. As in the Klein-Gordon case, the proof proceeds in several steps:
\begin{itemize}
\item Writing the equation as a $2\times 2$ system.  
\item Using a trick due to Taylor to eliminate the anti-diagonal lower order terms, this is the object of Proposition~\ref{prop:representation}.
\item Applying an Egorov theorem to get a nice pseudodifferential representation. This is Proposition~\ref{th: HUM operator-waves-total}.
\item Concluding by the G{\aa}rding inequality.
\end{itemize}
\bigskip
When performing the high-frequency analysis of this observation problem, it is convenient to recast it in a more general framework. More precisely, given a fixed time $T_0>0$, we shall study the HUM control operator for the problem 
\begin{equation}
\label{e:wave-HF-obs}
\begin{cases}
\d_t^2 v -\Delta v + v + A_0 D_t v + A_1 v  = 0 ,  \quad \text{on } [0,T_0] \times M\\
(v(0), \d_t v(0)) = (v_0, v_1)
\end{cases}
\end{equation}
where $A_0 \in \Cinf( 0,T_0 ; \Psi_{\phg}^0(M))$, $D_t = \frac{\d_t}{i}$ and $A_1 \in \Cinf( 0,T_0  ; \Psi_{\phg}^1(M))$ have symbols 
$$
a_0 = \sigma_0(A_0) \in  \Cinf(  0,T_0 ; S_{\phg}^0(M)) , \quad a_1 = \sigma_1(A_1) \in  \Cinf(  0,T_0  ; S_{\phg}^1(M))  .
$$
The main additional difficulty with respect to the model case of Section~\ref{s:model-case} is that we do not have the simple representation formula~\eqref{eq: HUM Duhamel splitting 1} for the solution.

\medskip
The equation \eqref{e:KG-obs}-\eqref{e:def-op-elliptiq-adj} under interest is a particular case of~\eqref{e:wave-HF-obs} with 
\bnan
\label{e:lot-link-a-b-1}
A_0 =- i \ovl{b_0}, \quad \text{with }
 a_0 (t, x, \xi)= - i \ovl{b_0}(t,x),\\
 \label{e:lot-link-a-b-2}
 A_1 = - \langle d \  \cdot  \ , \ovl{b_1} \rangle_x
 + \left(\ovl{c}  -\d_t  \ovl{b_0}  - \div( \ovl{b_1})  \right) ,
 \quad  \text{with } a_1(t,x,\xi) =- i \langle \xi , \ovl{b_1}(t,x)  \rangle_x,
 \enan
and all (high-frequency) results proved for~\eqref{e:wave-HF-obs} yield a counterpart for~\eqref{e:KG-obs}-\eqref{e:def-op-elliptiq-adj}.

\medskip
We now focus on equation~\eqref{e:wave-HF-obs}. 
For $(v_0, v_1) \in H^s \times H^{s-1}$, we recall that there exists a unique solution $v \in \Con^0(0,T_0; H^s(M)) \cap \Con^1(0,T_0; H^{s-1}(M))$ to~\eqref{e:wave-HF-obs}. We set
\begin{align}
\label{eq: splitting time}
v^+(t) =  \big( D_t  + \Lambda \big) v(t)   , \qquad  v^-(t) =  \big( D_t  - \Lambda \big) v(t)  
\end{align}
so that $v^\pm \in \Con^0(0,T_0; H^{s-1}(M))$ for $(v_0, v_1) \in H^s \times H^{s-1}$. We have
\bnan
\label{eq: splitting time inverse}
v(t) = \frac12 \Lambda^{-1} \big( v^+(t) - v^-(t) \big), \qquad D_t v(t) = \frac12  \big( v^+(t) + v^-(t) \big).
\enan
This corresponds to the splitting $(v^+ , v^-) = \widetilde{\Sigma}(v, \d_t v)$ with 
\begin{equation}
\label{e:splitting-sigma-tilde}
\widetilde{\Sigma}  =  \left(
\begin{array}{cc}
\Lambda  &1/ i  \\
- \Lambda & 1/i 
\end{array}
\right) , \qquad 
\widetilde{\Sigma}^{-1}  =  \frac12 
\left(
\begin{array}{cc}
\Lambda^{-1}  & -\Lambda^{-1}  \\
i  &  i
\end{array}
\right) .
\end{equation}
Note that this is not exactly the splitting $\Sigma$ introduced in~\eqref{e:splitting-sigma} but we have
$$
\Sigma  =\frac12 \Lambda^{-1} \left(
\begin{array}{cc}
1  & 0  \\
0  &  -1
\end{array}
\right)
\widetilde{\Sigma} .
$$
We could also have performed the analysis in Section~\ref{s:model-case} with $\widetilde{\Sigma}$, but in the case of the Klein Gordon equation, $\Sigma$ was more convenient to work with in $H^s \times H^s$.

\medskip
Then, writing $\d_t^2 -\Delta +1 = - \big( D_t  + \Lambda \big)\big( D_t  - \Lambda \big)$, Equation~\eqref{e:wave-HF-obs} can be recast as a system of two first order hyperbolic equation in terms of $v^\pm$, namely
\begin{equation}
\begin{cases}
-(D_t-\Lambda) v^+ + \frac{A_0}{2}(v^++v^-) + \frac{A_1 \Lambda^{-1}}{2}(v^+ - v^-) = 0 \\
-(D_t+\Lambda) v^- + \frac{A_0}{2}(v^++v^-) + \frac{A_1 \Lambda^{-1}}{2}(v^+ - v^-) = 0 .
\end{cases}
\end{equation}
This is a striclty hyperbolic Cauchy problem~ \cite[Chapter~7.7]{Taylor:vol2}  with solution operator $\mathscr{S}(t,s)$. As in the scalar case (see Corollary~\ref{cor:S(t,s)}), it enjoys the regularity  
\bna
\mathscr{S}(t,s) \in 
\Bo( (0,T_0)^2 ;\L(H^\sigma (M; \C^2) )) , \\
\d_t \mathscr{S}(t,s) , \d_s \mathscr{S}(t,s) \in \Bo ( (0,T_0)^2;\L(H^\sigma (M; \C^2); H^{\sigma - 1}(M; \C^2))) , 
\ena
for all $\sigma \in \R$. The definition of the operators $\d_t \mathscr{S}(t,s) , \d_s \mathscr{S}(t,s)$ is given in Corollary~\ref{cor:S(t,s)} (in the scalar case). It can be rewritten as
\begin{equation}
\label{e:syste-ondes-lot}
\begin{cases}
(D_t-\Lambda) v^+ - A_+ v^+ - A_- v^- = 0 , \\
(D_t+\Lambda) v^- - A_+ v^+ - A_- v^- = 0 ,
\end{cases}
\end{equation}
with
$$
A_+ = \frac12 \big(A_0 + A_1 \Lambda^{-1} \big) , \qquad A_- = \frac12 \big(A_0 - A_1 \Lambda^{-1} \big) ,
$$
both belonging to $\Cinf(0,T_0 ; \Psi_{\phg}^0(M))$. 
Note that the equations are only coupled by zero order terms. 
Again, this is $P V = 0$ with $V = \transp (v^+, v^- )$ and 
\bnan
\label{e:def-M}
P = D_t + M - A , \qquad 
M= 
\left(
\begin{array}{cc}
- \Lambda  & 0 \\
0 & \Lambda 
\end{array}
\right) 
, \qquad A = 
\left(
\begin{array}{cc}
A_+ & A_-\\
A_+ & A_-
\end{array}
\right) .
\enan

With this splitting in hand, we first have the following high-frequency representation formula for solutions of~\eqref{e:wave-HF-obs} or~\eqref{e:syste-ondes-lot}.
\begin{proposition}
\label{prop:representation}
We denote by $S_\pm (t, s)$ the solution operator associated to $(\d_t \pm i \Lambda - iA_\pm)$, that is $y (s') = S_\pm (s', s)y (s)$ if and only if 
\bnan
\label{e:decoupled-eqn-n}
(\d_t \pm i \Lambda - iA_\pm(t))y (t) = 0 , \quad \text{for all } t\in [s,s'] .
\enan
We also define
\bnan
\label{e:def-S+-}
\mathcal{S}(t,s) =  \left(
\begin{array}{cc}
S_+(t,s) & 0 \\
0    &  S_-(t,s)
\end{array}
\right) .
\enan
Then the solution operator $\mathscr{S}(t,s)$ of~\eqref{e:syste-ondes-lot} satisfies 
$$
\mathscr{S}(t,s) = \mathcal{S}(t,s) + \mathcal{R}(t,s) , \qquad (t,s) \in [0,T_0]^2 ,
$$
where, for all $\sigma \in \R$,
\bnan
\label{e:regularity-R1}
\mathcal{R}(t,s)  \in  \Bo( (0,T_0)^2 ;\L(H^\sigma (M; \C^2); H^{\sigma+1} (M; \C^2) )) ,\\
\label{e:regularity-R2}
\d_t\mathcal{R}(t,s) , \d_s \mathcal{R}(t,s)  \in  \Bo( (0,T_0)^2;\L(H^\sigma (M; \C^2) )) .
\enan

\end{proposition}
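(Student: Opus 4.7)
The plan is to invoke the classical Taylor diagonalization to decouple the system \eqref{e:syste-ondes-lot} modulo smoothing terms, and then to compare the resulting decoupled evolution with $\mathcal{S}$ via a Duhamel identity. Setting $D(t) = \left(\begin{array}{cc} A_+(t) & 0 \\ 0 & A_-(t)\end{array}\right) \in \Cinf(0,T_0;\Psi^0_{\phg}(M;\C^{2\times 2}))$, the coupling in $P = D_t + M - A$ is entirely carried by the off-diagonal part $A - D$, which must be absorbed by a pseudodifferential conjugation. The crucial point is that the principal symbols $\mp |\xi|$ of $M$ are distinct at every cotangent direction, which allows to solve the cohomological equation $[M,K] = -(A-D)$ modulo $\Psi^{-1}_{\phg}$. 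Explicitly, setting
\bnan
\label{e:def-K-taylor}
K(t) = \frac{1}{2}\Lambda^{-1}\left(\begin{array}{cc}0 & A_-(t) \\ -A_+(t) & 0\end{array}\right) \in \Cinf(0,T_0;\Psi^{-1}_{\phg}(M;\C^{2\times 2})),
\enan
a short computation using $[A_\pm,\Lambda] \in \Psi^0_{\phg}$ yields
\bna
[M,K] + (A-D) = \frac12\,\Lambda^{-1}\left(\begin{array}{cc}0 & [A_-,\Lambda]\\ [A_+,\Lambda] & 0\end{array}\right) \in \Cinf(0,T_0;\Psi^{-1}_{\phg}).
\ena

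Setting $\widetilde{V}(t) = (I + K(t))V(t)$ whenever $PV = 0$ and expanding $(I+K)^{-1} = I - K + K^2(I+K)^{-1}$, a direct computation (accounting for the $t$-dependence of $K$) shows that $\widetilde{V}$ satisfies the conjugated system
\bna
(D_t + M - D + R_0)\widetilde{V} = 0, \qquad R_0 \in \Cinf(0,T_0;\Psi^{-1}_{\phg}(M;\C^{2\times 2})).
\ena
Let $\widetilde{\mathscr{S}}(t,s)$ denote the associated solution operator, so that $\mathscr{S}(t,s) = (I + K(t))^{-1}\widetilde{\mathscr{S}}(t,s)(I + K(s))$. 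Since $\mathcal{S}$ solves $(D_t + M - D)\mathcal{S} = 0$, the difference $\widetilde{\mathscr{S}} - \mathcal{S}$ vanishes at $t = s$ and satisfies $(D_t + M - D)(\widetilde{\mathscr{S}} - \mathcal{S}) = -R_0 \widetilde{\mathscr{S}}$; Duhamel's formula then gives
\bnan
\label{e:duhamel-taylor}
\widetilde{\mathscr{S}}(t,s) - \mathcal{S}(t,s) = -i\int_s^t \mathcal{S}(t,\tau)\,R_0(\tau)\,\widetilde{\mathscr{S}}(\tau, s)\, d\tau,
\enan
which maps $H^\sigma(M;\C^2)$ into $H^{\sigma+1}(M;\C^2)$ uniformly in $(t,s) \in (0,T_0)^2$ because $R_0$ does.

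To translate this into a statement about $\mathcal{R} = \mathscr{S} - \mathcal{S}$, decompose
\bna
\mathcal{R} = (I+K(t))^{-1}\big[\widetilde{\mathscr{S}} - \mathcal{S}\big](I+K(s)) + \big[(I + K(t))^{-1} - I\big]\mathcal{S}(t,s)(I + K(s)) + \mathcal{S}(t,s)K(s),
\ena
so that every summand contains at least one factor of order $-1$ (either $\widetilde{\mathscr{S}} - \mathcal{S}$ from \eqref{e:duhamel-taylor}, or $K$); together with the uniform pseudodifferential estimates of Appendix~\ref{s:unif-pseudo}, this yields \eqref{e:regularity-R1} and the $\Bo_{\loc}$-continuity in $(t,s)$. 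Finally, \eqref{e:regularity-R2} follows by differentiating and using the evolution equations
\bna
\d_t \mathcal{R}(t,s) &=& i(A(t) - M)\mathcal{R}(t,s) + i(A(t) - D(t))\mathcal{S}(t,s), \\
\d_s \mathcal{R}(t,s) &=& i\mathcal{R}(t,s)(M - A(s)) - i\mathcal{S}(t,s)(A(s) - D(s)),
\ena
in which the loss of one derivative from applying $M$ is exactly compensated by the one-derivative gain of $\mathcal{R}$. The main technical obstacle is the careful bookkeeping of the pseudodifferential calculus and its $t$-uniform symbolic estimates in the conjugation step (in particular, the invertibility of $I + K(t)$ up to smoothing and the uniform boundedness of the resulting propagators); modulo these technicalities, the strategy is entirely classical.
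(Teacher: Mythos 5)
Your argument is essentially the paper's: Taylor's conjugation to decouple the $2\times 2$ first-order system modulo lower order, then Duhamel to compare propagators, and the cohomological equation is solved by the same type of $K\in\Cinf(0,T_0;\Psi^{-1}_{\phg}(M;\C^{2\times2}))$. The one genuine difference is that you conjugate by $(I+K)$ exactly, which forces you to invert $I+K(t)$ on each $H^\sigma$ uniformly in $t$; this is not automatic for $K\in\Psi^{-1}_{\phg}$, and you correctly flag it. It can be arranged by truncating $K$ to high frequencies to shrink its operator norm (a smoothing modification, so the cohomological equation survives modulo $\Psi^{-\infty}$), but the paper sidesteps the issue: it sets $W=(\id-K)V$ and only uses the one-sided product $(\id-K)(\id+K)=\id-K^2$, so the exact inverse never appears and $\mathcal{R}(t,s)$ drops out directly as $K(t)\mathscr{S}(t,s)-\mathcal{S}(t,s)K(s)+\int_s^t\mathcal{S}(t,t')R(t')\mathscr{S}(t',s)\,dt'$. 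Two smaller points: your remainder $R_0$ is not quite in $\Cinf(0,T_0;\Psi^{-1}_{\phg})$ --- the conjugation produces terms of the form $\Psi^{-2}_{\phg}D_t$ (e.g.\ from $PK^2$), which is why the paper works in the class $\mathscr{R}^{-1}=\Cinf\Psi^{-1}_{\phg}+\Cinf\Psi^{-2}_{\phg}\,D_t$ and then eliminates $D_t$ using the equation $PV=0$; and the sign of your cohomological equation, $[M,K]=-(A-D)$ modulo $\Psi^{-1}$, is the opposite of the paper's $[M,K]=A-D$, as it must be since you conjugate by $(I+K)$ rather than $(\id-K)$, and the explicit $K$ you wrote down does satisfy it.
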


\bnp[Proof of Proposition~\ref{prop:representation}]
We use a trick (due to Taylor~\cite[Section~2]{Taylor:75}) to decouple the equations. More precisely, we look for $K \in \Cinf(0,T_0 ; \Psi_{\phg}^{-1}(M;\C^2))$ so that the function $W=(\id - K)V$ solves a diagonal system, up to appropriate remainders (on the variable $V$). We have on the one hand 
\bna
(\id + K)W = V - K^2 V,
\ena
 and hence
\bnan
\label{e:decoupling1}
(\id - K)P(\id + K)W = (\id - K)P ( V - K^2 V) = -  (\id - K)P  K^2 V = R V ,
\enan
since $P V = 0$. Moreover, the remainder satisfies
$ R \in \mathscr{R}^{-1}$, where
$$
  \mathscr{R}^{-1} =  \Cinf(0,T_0 ; \Psi_{\phg}^{-1}(M;\C^2)) + \Cinf(0,T_0 ; \Psi_{\phg}^{-2}(M;\C^2)) D_t  $$
  is the admissible class of remainders in the present context.
On the other hand, we have
\bnan
\label{e:decoupling2}
(\id - K)P(\id + K)W = PW + [P,K]W - KPK W ,
\enan
with $KPK \in  \mathscr{R}^{-1}$. We then remark that $[D_t , K] W = (D_t K) W$ so that $[D_t , K] \in  \Cinf(0,T_0 ; \Psi_{\phg}^{-1}(M;\C^2)) \subset \mathscr{R}^{-1}$, and as well $[A, K] \in \mathscr{R}^{-1}$. Hence, if we can find $K$ such that 
\bnan
\label{e:decouplingK}
- \left(
\begin{array}{cc}
0 & A_-\\
A_+ & 0
\end{array}
\right)  + 
[M , K]\in \mathscr{R}^{-1} ,
\enan
we will then obtain from~\eqref{e:decoupling1}-\eqref{e:decoupling2} that $W$ solves
\bnan
\label{e:eq-W}
P_d W= R_1 W + R_2 V =RV ,
\enan
with $R_1 ,R_2 , R \in \mathscr{R}^{-1}$ and, with $M$ defined in~\eqref{e:def-M}, 
$$
P_d = D_t + M - A_d 
, \qquad A_d = 
\left(
\begin{array}{cc}
A_+ & 0\\
0     & A_-
\end{array}
\right) .
$$
Now taking (for instance)
$$
K := \frac{1}{2} \left(
\begin{array}{cc}
0 &  - \Lambda^{-1}A_+\\
  A_- \Lambda^{-1}    & 0
\end{array}
\right)
\in 
\Cinf(0,T_0 ; \Psi_{\phg}^{-1}(M;\C^2))
$$
realizes~\eqref{e:decouplingK}, and we are left to study $P_d W= RV$, $R \in \mathscr{R}^{-1}$, with $W=(\id - K)V$.

With $\mathcal{S}(t,s)$ defined in~\eqref{e:def-S+-},  Equation~\eqref{e:eq-W} is now solved by 
$$
W(t) = \mathcal{S}(t,s)W(s) + \int_s^t \mathcal{S}(t,t') R(t') V(t') dt' , \qquad R \in \mathscr{R}^{-1} .
$$
Recalling that $W=(\id - K)V$ and that $V(t) = \mathscr{S}(t,s)V(s)$, this yields
$$
V(t) = \mathcal{S}(t,s)V(s) + K(t)\mathscr{S}(t,s)V(s) - \mathcal{S}(t,s)K(s)V(s)+\Big( \int_s^t \mathcal{S}(t,t') R(t') \mathscr{S}(t',s) dt' \Big) V(s) .
$$
This can be rewritten as
$$
V(t) = \mathcal{S}(t,s)V(s) + \mathcal{R}(t,s) V(s), 
$$
with 
$$
\mathcal{R}(t,s) = K(t)\mathscr{S}(t,s)  - \mathcal{S}(t,s)K(s) +\Big( \int_s^t \mathcal{S}(t,t') R(t') \mathscr{S}(t',s) dt' \Big) $$
satisfying 
\bna
\mathcal{R}(t,s)  \in \Bo( (0,T_0)^2 ;\L(H^\sigma (M; \C^2); H^{\sigma+1} (M; \C^2) )), \\
\d_t \mathcal{R}(t,s), \d_s \mathcal{R}(t,s)  \in  \Bo( (0,T_0)^2;\L(H^\sigma (M; \C^2) )) , 
\ena
 for all $\sigma \in \R$, according to the respective regularity properties of $\mathscr{S}(t,s),  \mathcal{S}(t,s)$ and $K(s)$ (see Appendix~\ref{s:pseudo-elementaire} for the regularity properties of $\mathcal{S}(t,s),\mathscr{S}(t,s)$).
\enp
\begin{remark}
Note that the decoupling of the two equations is permitted since the difference of the two eigenvalues of the principal part of the system, namely $\pm \lambda$, is elliptic. Moreover, we do no have the choice of the principal symbol of $K$ in this procedure.
Also, we could choose $K$ by a classical iterative procedure so that all remainders are infinitely smoothing, which is not needed here.
\end{remark}

\begin{remark}
\label{rem:Lambda}
Note here that we do not need to use that $\Lambda$ (the square root of the Laplace operator defined via spectral theory) is a pseudodifferential operator. Indeed, we could in place of $\Lambda$ use any operator $P$ such that
\begin{itemize}
\item $P \in \Psi^1_{\phg}(M)$ with $\sigma_1(P)(x,\xi) = \lambda(x,\xi)=|\xi|_x$
\item $P$ is selfadjoint on $L^2(M)$,
\item $P$ is positive, in the sense that $(P u, u)_{L^2(M)} \geq C \|u\|_{L^2(M)}^2$,
\end{itemize}
Then notice that we have  $-\Delta - P^2 \in \Psi^1_{\phg}(M)$, with principal symbol $\sigma_1(-\Delta - P^2)$ real since $-\Delta - P^2$ is selfadjoint on $L^2(M)$. As a consequence writing Equation~\eqref{e:wave-HF-obs} with $P^2$ instead of $\Lambda^2 = -\Delta + 1$ only amounts to add to $A_1$ a term with real principal symbol. Then, we conclude by remarking that the result of Proposition~\ref{th: HUM operator-waves-total} only depends on $\Im(a_1)$.

Such an operator $P$ is easy to construct using only basic pseudodifferential calculus on $M$: Start with some $A \in \Psi^1_{\phg}(M)$ with $\sigma_1(A)(x,\xi) = \lambda(x,\xi)$ (given by any quantification of the symbol $\lambda$), and set $P:= \frac12(A+A^*) + C_0$ with $C_0$ large enough so that $P$ is positive (use for that the G{\aa}rding inequality). Then it is clear that $P$ fulfills all above conditions.

In Section~\ref{s:model-case}, it was convenient to take an exact square root $\Lambda$, so that to have the nice exact formula~\eqref{eq: HUM Duhamel splitting 1}. The analysis below shows this is not needed.
\end{remark}

The representation formula of Proposition~\ref{prop:representation} together with an appropriate Egorov theorem (Theorem~\ref{theorem: Egorov}) allows to express the Gramian control operator as follows.

\begin{proposition}
\label{th: HUM operator-waves-total}
Denoting by $V_0 = (v_0,  v_1) \in H^s(M) \times H^{s-1}(M)$ the initial data for System~\eqref{e:wave-HF-obs}, and $\widetilde{\Sigma} V_0 = \transp\left(\frac{v_1}{i} + \Lambda v_0 , \frac{v_1}{i} - \Lambda v_0  \right)$, we have 
\begin{equation}
\label{eq: obs = G_T complete}
\int_0^T \|\obs v(t)\|_{H^s(M)}^2 dt  = \big(\G_T \widetilde{\Sigma} V_0, \widetilde{\Sigma} V_0 \big)_{H^{s-1}(M) \times H^{s-1}(M)} ,
\end{equation}
where $\G_T = G_T + R_T$ with $R_T   \in \Bo \big(0,T_0; \L(H^\sigma(M), H^{\sigma +1 }(M; \C^2))\big)$ for all $\sigma \in \R$, and $G_T  \in \Cinf (0,T_0 ;\Psi_{\phg}^0(M ;\C^{2\times2}))$ has principal symbol
\bna
\sigma_0(G_T)   := \frac14 
\left(
\begin{array}{cc}
g_T^+& 0 \\
0  &  g_T^-
\end{array}
\right)
\in S_{\phg}^{0}(T^*M , \C^{2 \times 2}) ,
\ena
with 
$$
g_T^\pm (\rho)=  \int_0^T  \obs^2 \circ \pi \circ \varphi^\pm_{t}(\rho)  e^{-\int_0^t \Im(a_0 \pm a_1 \lambda^{-1}) (\tau, \varphi^\pm_{\tau}(\rho) ) d\tau } dt . 
$$
\end{proposition}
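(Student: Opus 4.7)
The plan is to mimic the structure used in the proof of Proposition~\ref{th: HUM operator-waves} for the Klein--Gordon case, but to work with the splitting $\widetilde{\Sigma}$ adapted to the first order system~\eqref{e:syste-ondes-lot} and with the decoupling from Proposition~\ref{prop:representation}. The starting point is the inversion formula~\eqref{eq: splitting time inverse}, $v(t) = \tfrac{1}{2}\Lambda^{-1}(v^+(t)-v^-(t))$, which gives
$$
\|\obs v(t)\|_{H^s(M)}^2 = \tfrac{1}{4}\big(B(v^+(t) - v^-(t)), v^+(t) - v^-(t)\big)_{H^{s-1}(M)},
$$
with $B = \Lambda^{1-2s}\obs\Lambda^{2s}\obs \Lambda^{-1} \in \Psi^0_{\phg}(M)$ selfadjoint on $H^{s-1}(M)$ (modulo smoothing) and with real principal symbol $\obs^2(x)$. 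Setting $V(t) = \transp(v^+(t), v^-(t)) = \mathscr{S}(t,0)\widetilde{\Sigma}V_0$ and integrating in $t$ produces~\eqref{eq: obs = G_T complete} with
$$
\G_T = \tfrac{1}{4}\int_0^T \mathscr{S}(t,0)^{\#}\, \mathscr{B}\, \mathscr{S}(t,0)\,dt, \quad \mathscr{B} = \left(\begin{array}{cc} B & -B \\ -B & B\end{array}\right),
$$
where $\#$ stands for the adjoint in the $H^{s-1}(M)\times H^{s-1}(M)$ inner product.

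I would then plug in the decomposition $\mathscr{S}(t,0) = \mathcal{S}(t,0) + \mathcal{R}(t,0)$ furnished by Proposition~\ref{prop:representation}. By the mapping properties~\eqref{e:regularity-R1}--\eqref{e:regularity-R2}, every term of the expansion containing at least one factor $\mathcal{R}$ or $\mathcal{R}^{\#}$ is one-derivative smoothing and may be absorbed in $R_T$. The only non-trivial remaining piece is $\tfrac14\int_0^T \mathcal{S}(t,0)^{\#}\mathscr{B}\mathcal{S}(t,0)\,dt$, whose diagonal entries are $\tfrac14\int_0^T S_\pm^{\#} B S_\pm\,dt$ and whose off-diagonal entries are $-\tfrac14\int_0^T S_\pm^{\#} B S_\mp\,dt$. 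For the off-diagonal blocks, the two factors propagate along opposite Hamiltonian flows (since $\varphi^-_t = \varphi^+_{-t}$, see Lemma~\ref{l:phi+phi-}), so that their phases accumulate instead of cancelling; the resulting $t$-integral is then a $1$-smoothing operator, by the very argument used for $R_T^1$ in the proof of Proposition~\ref{th: HUM operator-waves} (the general version being Lemma~\ref{lemma: regularity Hs}).

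For the diagonal blocks $S_\pm^{\#} B S_\pm$, I would apply the non-autonomous Egorov theorem (Theorem~\ref{theorem: Egorov}) to the propagators $S_\pm$, generated respectively by $\pm\Lambda - A_\pm$. The principal part $\pm\lambda$ transports the principal symbol $\obs^2$ of $B$ along the flow $\varphi^\pm_t$, while the imaginary part of the subprincipal symbol $-\tfrac12(a_0 \pm a_1\lambda^{-1})$ of the generator---precisely the defect of unitarity of $S_\pm$---contributes the exponential weight $\exp\big(-\int_0^t \Im(a_0 \pm a_1\lambda^{-1})(\tau,\varphi^\pm_\tau(\rho))\,d\tau\big)$ in the principal symbol of $S_\pm^{\#} B S_\pm$, the factor $2$ arising from the adjoint in the Egorov formula combining with the factor $\tfrac12$ in the definition of $A_\pm$. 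Integrating in $t$ and multiplying by the overall prefactor $\tfrac14$ then yields exactly the claimed diagonal formula for $\sigma_0(G_T)$.

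The main difficulty lies in this Egorov analysis: one needs a version of the theorem that tracks simultaneously the Hamiltonian flow of the principal symbol and the decay/growth produced by the lower order terms of the generator, and that reconciles the $L^2$-adjoint (natural in the Egorov statement) with the $H^{s-1}$-adjoint $\#$ appearing in $\G_T$ (the latter amounting to an extra conjugation by $\Lambda^{s-1}$). This is precisely the purpose of Theorem~\ref{theorem: Egorov} from the appendix, whose main application is the present proposition.
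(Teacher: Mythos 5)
Your proposal is correct and follows essentially the same route as the paper's proof: write $v(t)$ via the inversion formula $v=\frac12\Lambda^{-1}(v^+-v^-)$, express the observation integral as a quadratic form $\big(\G_T\widetilde{\Sigma}V_0,\widetilde{\Sigma}V_0\big)$, substitute the decomposition $\mathscr{S}=\mathcal{S}+\mathcal{R}$ from Proposition~\ref{prop:representation}, apply the non-autonomous Egorov theorem to the diagonal blocks $S_\pm^*BS_\pm$, and dispose of the anti-diagonal blocks by Lemma~\ref{lemma: regularity Hs non self}. The only cosmetic difference is that you normalize $B$ to be of order $0$ and work directly with the $H^{s-1}$-adjoint, whereas the paper keeps the $L^2$-adjoint throughout and conjugates by $\Lambda^{2(1-s)}$ at the end, but these are the same computation in different dress.
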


\begin{remark}
Similarly, we also recover an analogue of \cite[Lemma~3.1]{Leb:96} which is the crucial step towards the estimate of the optimal exponential decay rate for the damped wave equation. Namely, for all $T>0$, there is a constant $C>0$ such that we have, for all solutions of $\d_t^2 v -\Delta v + b_0 \d_t v  = 0$, 
\bna
\E_1(v, \d_tv)(T) \leq \exp \left(-2 \inf_{(x, \xi) \in S^*M}\int_0^T b_0(s, x(s)) ds\right)\E_1(v, \d_tv)(0)  + C \E_0(v, \d_tv)(0) , \\
\E_1(v, \d_tv)(T) \geq \exp \left(-2 \sup_{(x, \xi) \in S^*M}\int_0^T b_0(s, x(s)) ds\right)\E_1(v, \d_tv)(0)  - C \E_0(v, \d_tv)(0) ,
\ena
where $x(s) = \pi \circ \varphi_s(x, \xi)$. The proof is very close to that of Proposition~\ref{th: HUM operator-waves-total}: it follows from the representation formula of Proposition~\ref{prop:representation}, the Egorov Theorem~\ref{theorem: Egorov}, and the sharp G{\aa}rding estimate.
\end{remark}
\bnp[Proof of Proposition~\ref{th: HUM operator-waves-total}]
According to~\eqref{eq: splitting time inverse}, the unique solution to~\eqref{e:wave-HF-obs} is given by 
$$v(t) = \frac12 \Lambda^{-1} \big( v^+(t) - v^-(t) \big) = L V(t),
$$
where
$$V(t) = \transp( v^+(t) , v^-(t)) , \quad \text{ and }\quad L: = \frac12 \Lambda^{-1} (1, -1) .
$$ 
According to Proposition~\ref{prop:representation}, $V(t) = \transp(v^+(t) , v^-(t))$ satisfies
$$
V(t) = \mathscr{S}(t,0)V^0 , \qquad \mathscr{S}(t,0) = \mathcal{S}(t,0)  + \mathcal{R}(t,0)  , \qquad t  \in [0,T_0]  ,
$$
with  
$$
V^0 = (v^+_0 , v^-_0 ) =\widetilde{\Sigma}(v_0,v_1) = \left(\frac{v_1}{i} + \Lambda v_0 , \frac{v_1}{i} - \Lambda v_0  \right) \in H^{s-1}(M ; \C^2) ,
$$
for $(v_0,  v_1) \in H^s(M) \times H^{s-1}(M)$.
Now, we compute 
\bna
\int_0^T \|\obs v(t)\|_{H^s(M)}^2 dt & =&  \int_0^T \|\obs L \mathscr{S}(t,0)V^0 \|_{H^s(M)}^2 dt  \\
 & =&  \int_0^T \left( \mathscr{S}(t,0)^* \transp L \obs \Lambda^{2s}\obs L \mathscr{S}(t,0)V^0, V^0 \right)_{L^2(M; \C^2)} dt  \\
\ena
where all adjoints are taken in $L^2$.
This implies
\bna
\int_0^T \|\obs v(t)\|_{H^s(M)}^2 dt =  \left( \mathcal{G}_T V^0, V^0 \right)_{H^{s -1}(M; \C^2)}   ,
 \quad \mathcal{G}_T = \int_0^T  \Lambda^{2(1-s)}\mathscr{S}(t,0)^* \transp L \obs \Lambda^{2s}\obs L \mathscr{S}(t,0)dt .
\ena
Recalling now the form of $\mathscr{S}(t,0) = \mathcal{S}(t,0) + \mathcal{R}(t,0)$ given by Proposition~\ref{prop:representation}, we set 
\bnan
\label{e:def Gtilde}
\tilde{\mathcal{G}}_T   := 
 \int_0^T  \Lambda^{2(1-s)}\mathcal{S}(t,0)^* \transp L \obs \Lambda^{2s}\obs L \mathcal{S}(t,0)dt ,
  \quad \text{and} \quad \tilde{R}_T = \mathcal{G}_T - \tilde{\mathcal{G}}_T
\enan
The regularity properties of  $\Lambda^{2(1-s)}$, $\mathscr{S}(t,0)$, $L$, and that of $\mathcal{R}(t,0)$ given in~\eqref{e:regularity-R1}-\eqref{e:regularity-R2} yield that 
\bna
\tilde{R}_T \in \Bo( (0,T_0) ;\L(H^\sigma (M; \C^2); H^{\sigma+1} (M; \C^2) )) .
\ena
Next, recalling the definition of $\mathcal{S}(t,0)$ in~\eqref{e:def-S+-}, we can compute
\bnan
\label{e:evolgram}
\mathcal{S}(t,0)^* \transp L \obs \Lambda^{2s}\obs L \mathcal{S}(t,0) = 
\left(
\begin{array}{cc}
S_+(t,0)^*  B S_+(t,0) & - S_+(t,0)^*  B S_-(t,0)\\
- S_-(t,0)^*  B S_+(t,0) & S_-(t,0)^*  B S_-(t,0) 
\end{array}
\right) , 
\enan
with
$$
B = \frac14 \Lambda^{-1} \obs \Lambda^{2s} \obs \Lambda^{-1} \in  \Psi^{2s-2}_{\phg}(M).
$$
Let us first study the diagonal terms in~\eqref{e:evolgram}. With $S_\pm(t,0)$ defined in~\eqref{e:decoupled-eqn-n}, the Egorov theorem~\ref{theorem: Egorov} yields the existence of $Q_\pm (t ) \in \Cinf  \big((0,T_0), \Psi^{2s-2}_{\phg}(M)\big)$ 
 and
 \bna
 R_\pm (t ) \in \Bo \big((0,T_0) , \L(H^\sigma(M), H^{\sigma +1-2(s-1)}(M))\big) ,\\
 \d_t R_\pm (t ) \in \Bo \big((0,T_0), \L(H^\sigma(M), H^{\sigma -2(s-1)}(M))\big),
 \ena
for all $\sigma \in \R$, such that we have 
 $$
 S_\pm (t,0)^*  B S_\pm (t,0)  - Q_\pm(t ) = R_\pm(t ) , \qquad  t  \in (0,T_0) .
 $$ 
and the principal symbol of $Q_\pm(t )$ is given by 
 $$
 q_\pm (t , \rho)= \frac14 \lambda^{2s-2} \obs^2 \circ \pi \circ \varphi^{\pm}_{t}(\rho)  e^{2\int_t^0 \Im(a_\pm) (\tau, \varphi^\pm_{\tau}(\rho) ) d\tau }\quad \in
 \Cinf((0,T_0) , S^{2s-2}_{\phg}(T^\ast M)) ,
 $$
 where $a_{\pm} = \sigma_0(A_\pm)$.
 
Concerning the anti-diagonal terms in~\eqref{e:evolgram} when integrated on $(0,T)$, Lemma~\ref{lemma: regularity Hs non self} yields 
$$
\int_0^T S_\pm(t,0)^*  B S_\mp(t,0) dt \in \Bo \big((0,T_0) , \L(H^\sigma(M), H^{\sigma +1-2(s-1)}(M))\big) .
$$
 
With all these properties in hand, when coming back to~\eqref{e:def Gtilde}, we may now write $\tilde{\mathcal{G}}_T:= G_T +R^0_T$
where $R^0_T   \in \Bo \big((0,T_0), \L(H^\sigma(M; \C^2), H^{\sigma +1 }(M; \C^2))\big)$ for all $\sigma \in \R$, and $G_T$ is given by 
\bna
G_T   := 
\left(
\begin{array}{cc}
\Lambda^{2(1-s)} \int_0^T Q_+(t ) dt& 0 \\
0  & \Lambda^{2(1-s)} \int_0^T Q_-(t )  dt
\end{array}
\right) 
 \quad \in \Cinf  \big(0,T_0;\Psi^{0}_{\phg}(M)\big) ,
\ena
and has principal symbol
\bna
\sigma_0(G_T)   := \frac14 
\left(
\begin{array}{cc}
 \int_0^T  \obs^2 \circ \pi \circ \varphi^+_{t}(\rho)  e^{-2\int_0^t \Im(a_+) (\tau, \varphi^+_{\tau}(\rho) ) d\tau } dt& 0 \\
0  &  \int_0^T \obs^2 \circ \pi \circ \varphi^-_{t}(\rho)  e^{-2\int_0^t \Im(a_-) (\tau, \varphi^-_{\tau}(\rho) ) d\tau } dt
\end{array}
\right) .
\ena

This, together with~\eqref{e:def Gtilde} concludes the proof of the Proposition.
\enp

As a consequence of Proposition~\ref{th: HUM operator-waves-total}, we obtain the following high-frequency observability estimate. We use for this the definition of the constant $\mathfrak{K}(T)$ associated to~\eqref{e:wave-HF-obs}:
\bnan
\label{e:KTgeneral}
\mathfrak{K}(T)  := \min \left\{ \min_{\rho \in S^*M} g_T^+(\rho) , \min_{\rho \in S^*M} g_T^-(\rho)\right\} .
\enan

\begin{proposition}
\label{l:obsHF-general}
For any $T_0>0$, there exists a constant $C_0>0$ such that for all $T \in [0,T_0]$, for all $V_0 =(v_0, v_1) \in H^s(M)\times H^{s-1}(M)$ and associated solution $v$ of~\eqref{e:wave-HF-obs}, we have
\begin{equation}
\label{eq: obs HF-general}
\int_0^T \|\obs v (t)\|_{H^s(M)}^2 dt  \geq \mathfrak{K}(T)  \E_s(V_0) -C_0 \E_{s-1/2}(V_0),
\end{equation}
where $\mathfrak{K}(T)$ is defined by~\eqref{e:KTgeneral}.
\end{proposition}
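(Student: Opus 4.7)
The plan is to mimic the structure of the proof of Proposition~\ref{l:obsHF} from the Klein-Gordon case, feeding in the general representation formula obtained in Proposition~\ref{th: HUM operator-waves-total} together with the sharp Gårding inequality. The only new bookkeeping concerns the scalar product: here one works in $H^{s-1}(M;\C^2)$ (instead of $H^s(M;\C^2)$), and the splitting is realized by $\widetilde{\Sigma}$ rather than $\Sigma$.

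\textbf{Step 1: reduce to a quadratic form on the data.} Starting from Proposition~\ref{th: HUM operator-waves-total} with $V^0 := \widetilde{\Sigma} V_0 = \transp\bigl(\tfrac{v_1}{i}+\Lambda v_0 ,\ \tfrac{v_1}{i}-\Lambda v_0\bigr)$, one has
\bna
\int_0^T \|\obs v(t)\|_{H^s(M)}^2 dt = \bigl(\mathcal{G}_T V^0, V^0\bigr)_{H^{s-1}(M;\C^2)} = \bigl(G_T V^0, V^0\bigr)_{H^{s-1}(M;\C^2)} + \bigl(R_T V^0, V^0\bigr)_{H^{s-1}(M;\C^2)}.
\ena
From the definition of $\widetilde{\Sigma}$ together with $\|a+b\|^2+\|a-b\|^2 = 2\|a\|^2+2\|b\|^2$, one gets $\|V^0\|_{H^{s-1}(M;\C^2)}^2 = 4\E_s(V_0)$, and similarly $\|V^0\|_{H^{s-3/2}(M;\C^2)}^2 \leq C\E_{s-1/2}(V_0)$ (using that $\Lambda$ is an isomorphism from $H^\sigma$ to $H^{\sigma-1}$).

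\textbf{Step 2: absorb the smoothing remainder.} Since $R_T \in \Bo(0,T_0;\L(H^\sigma(M;\C^2),H^{\sigma+1}(M;\C^2)))$ uniformly on compact time intervals, Cauchy-Schwarz gives
\bna
\bigl|(R_T V^0, V^0)_{H^{s-1}(M;\C^2)}\bigr| \leq \|R_T V^0\|_{H^{s-1/2}(M;\C^2)}\|V^0\|_{H^{s-3/2}(M;\C^2)} \leq C\|V^0\|_{H^{s-3/2}(M;\C^2)}^2 \leq C\E_{s-1/2}(V_0),
\ena
with $C$ uniform for $T\in[0,T_0]$.

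\textbf{Step 3: apply sharp Gårding to the principal part.} The key observation is that
\bna
\sigma_0\!\left(G_T - \tfrac{\mathfrak{K}(T)}{4}\id\right) = \tfrac14\!\left(\begin{array}{cc} g_T^+ - \mathfrak{K}(T) & 0 \\ 0 & g_T^- - \mathfrak{K}(T) \end{array}\right)
\ena
is diagonal with nonnegative entries on $T^*M\setminus 0$, by the very definition~\eqref{e:KTgeneral} of $\mathfrak{K}(T)$. Applying the sharp Gårding inequality (Theorem~\ref{th: sharp garding manifold}) in the scalar product of $H^{s-1}(M;\C^2)$ yields
\bna
\bigl(G_T V^0, V^0\bigr)_{H^{s-1}(M;\C^2)} \geq \tfrac{\mathfrak{K}(T)}{4}\|V^0\|_{H^{s-1}(M;\C^2)}^2 - C\|V^0\|_{H^{s-3/2}(M;\C^2)}^2,
\ena
with $C$ uniform in $T\in[0,T_0]$.

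\textbf{Step 4: assemble.} Combining Steps~1--3 and inserting the identities from Step~1 relating Sobolev norms of $V^0$ to the energies $\E_s(V_0),\E_{s-1/2}(V_0)$, one obtains
\bna
\int_0^T \|\obs v(t)\|_{H^s(M)}^2 dt \geq \mathfrak{K}(T)\E_s(V_0) - C_0\,\E_{s-1/2}(V_0),
\ena
which is~\eqref{eq: obs HF-general}. No step looks like a real obstacle: the hardest work was done in Proposition~\ref{th: HUM operator-waves-total} (Taylor's decoupling trick and the non-selfadjoint Egorov theorem); here it only remains to combine that representation with Gårding and track the $\widetilde{\Sigma}$-induced change of norms, exactly parallel to the Klein-Gordon proof.
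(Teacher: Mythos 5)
Your proposal is correct and follows essentially the same route as the paper: use Proposition~\ref{th: HUM operator-waves-total} to rewrite the observation as the quadratic form $\bigl(\mathcal{G}_T\widetilde\Sigma V_0,\widetilde\Sigma V_0\bigr)_{H^{s-1}(M;\C^2)}$, control the $1$-smoothing remainder $R_T$, apply the uniform sharp G{\aa}rding inequality to $G_T-\tfrac{\mathfrak{K}(T)}{4}\id$, and translate the $\widetilde\Sigma$-norms back into energies via $\|\widetilde\Sigma V_0\|_{H^{s-1}(M;\C^2)}^2 = 4\E_s(V_0)$. The paper's proof is just a more condensed version of the same argument (your Steps 2 and 3 are collapsed into a single invocation of Theorem~\ref{th: sharp garding manifold} together with the remainder absorption).
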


Note that in the case of Equation~\eqref{e:KG-obs}-\eqref{e:def-op-elliptiq-adj} above, the symbols $a_0 ,a_1$ are given by \eqref{e:lot-link-a-b-1}-\eqref{e:lot-link-a-b-2}, so that in this case, denoting by $(x^\pm(s) , \xi^\pm(s)) = \varphi_s^\pm (x_0 ,\xi_0)$, we have
\bnan
\label{e:def-gpm-pde}
g_T^\pm (x_0 , \xi_0)=  \int_0^T  \obs^2(x^\pm(t))  \exp \left(\int_0^t \Re(b_0)(\tau, x^\pm(\tau)) \pm \left< \frac{\xi^\pm(\tau)}{|\xi^\pm (\tau)|_{x^\pm(\tau)}} , \Re(b_1)(\tau , x^\pm(\tau)) \right>_{x^\pm (\tau)}  d\tau \right) dt.
\enan
The two functions $g_T^-$ and $g_T^+$ in~\eqref{e:def-gpm-pde} are linked by the following lemma, proved in Appendix~\ref{app:geom}.
\begin{lemma}
\label{l:egal-gpm-pde}
With $g_T^-$ and $g_T^+$ given by~\eqref{e:def-gpm-pde} we have $g_T^-  \circ \sigma = g_T^+$, where $\sigma(x,\xi) = (x, -\xi)$.
\end{lemma}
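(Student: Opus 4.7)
The plan is to reduce the identity $g_T^- \circ \sigma = g_T^+$ to a symmetry property of the geodesic (Hamiltonian) flow under the involution $\sigma(x,\xi) = (x,-\xi)$. Concretely, because the principal symbol $\lambda(x,\xi) = |\xi|_x$ is even with respect to $\xi$ (as it comes from a Riemannian norm), its Hamiltonian vector field $H_\lambda$ transforms as $\sigma_* H_\lambda = -H_\lambda$. This yields the key relation
$$
\varphi^+_t \circ \sigma = \sigma \circ \varphi^+_{-t}  \quad \text{on } T^*M \setminus 0,
$$
and, combined with $\varphi^-_t = \varphi^+_{-t}$ from Lemma~\ref{l:phi+phi-}, gives
$$
\varphi^-_t \circ \sigma = \sigma \circ \varphi^+_t.
$$

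Once this is established, the rest is a direct substitution. First, I would denote $(x^+(s),\xi^+(s)) = \varphi^+_s(x_0,\xi_0)$, so that the relation above implies
$$
(x^-(s),\xi^-(s)) := \varphi^-_s(x_0,-\xi_0) = \sigma(x^+(s),\xi^+(s)) = (x^+(s),-\xi^+(s)).
$$
Thus along the flow starting at $\sigma(x_0,\xi_0)$, the base point coincides with that of $\varphi^+_s(x_0,\xi_0)$ while the fiber variable is reversed. Plugging this into the definition~\eqref{e:def-gpm-pde} of $g_T^-$, the factor $\obs^2(x^-(t))$ becomes $\obs^2(x^+(t))$, the term $\Re(b_0)(\tau,x^-(\tau))$ becomes $\Re(b_0)(\tau,x^+(\tau))$, and in the $b_1$ pairing the normalized cotangent vector $\xi^-(\tau)/|\xi^-(\tau)|_{x^-(\tau)}$ becomes $-\xi^+(\tau)/|\xi^+(\tau)|_{x^+(\tau)}$. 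The extra minus sign then combines with the explicit $\,-\,$ sign in front of the $b_1$-term in the formula for $g_T^-$, reproducing the $\,+\,$ sign in the formula for $g_T^+$, and we recover $g_T^-(\sigma(x_0,\xi_0)) = g_T^+(x_0,\xi_0)$.

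The only nontrivial point is the flow identity $\varphi^-_t \circ \sigma = \sigma \circ \varphi^+_t$; I would justify it either by direct computation of $H_\lambda$ in local coordinates (where $\partial_{\xi_k}\lambda$ and $\partial_{x_k}\lambda$ are both odd in $\xi$, so $H_\lambda$ is odd under $\sigma$) or by noting that this is a standard symmetry of the geodesic flow on $T^*M \setminus 0$ and is essentially contained in (or immediately follows from) Lemma~\ref{l:phi+phi-} in Appendix~\ref{app:geom}. With this geometric fact in hand, the proof is purely computational and poses no additional difficulty.
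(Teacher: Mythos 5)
Your proof is correct and is essentially the paper's argument: both rely exclusively on the two facts from Lemma~\ref{l:phi+phi-} (namely $\varphi^-_t=\varphi^+_{-t}$ and $\sigma\circ\varphi_t=\varphi_{-t}\circ\sigma$) followed by direct substitution into the defining formula~\eqref{e:def-gpm-pde}. The only superficial difference is organizational: you first combine the two symmetries into the single identity $\varphi^-_t\circ\sigma=\sigma\circ\varphi^+_t$ and then substitute, whereas the paper first rewrites $g_T^\pm$ in terms of the unified flow $\varphi_t=\varphi^+_t$ with time arguments $\pm t,\pm\tau$ and then applies the $\sigma$-symmetry componentwise; the resulting computation is the same.
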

According to Lemma~\ref{l:egal-gpm-pde} (together with the fact that $\sigma$ is an involution), we have in this situation  $\min_{\rho \in S^*M} g_T^+(\rho) = \min_{\rho \in S^*M} g_T^-(\rho)$. This justifies the definition~\eqref{e:KTgeneral+} in Theorem~\ref{t:main-result-lot}.

\bnp[Proof of Proposition~\ref{l:obsHF-general}]
We follow the proof of Proposition~\ref{l:obsHF}.
From Proposition~\ref{th: HUM operator-waves-total} and the use of the uniform G{\aa}rding estimate of Theorem~\ref{th: sharp garding manifold} (or its corollary), we obtain, uniformly for $T \in [0,T_0]$,
\bna
\int_0^T \|\obs v(t)\|_{H^s(M)}^2 dt  & = & \big(\G_T \widetilde{\Sigma} V_0, \widetilde{\Sigma} V_0 \big)_{H^{s-1}(M) \times H^{s-1}(M)} \\
& \geq & \frac14 \mathfrak{K}(T) \| \widetilde{\Sigma} V_0\|_{H^{s-1}(M; \C^2)}^2 - C_0 \| \widetilde{\Sigma} V_0\|_{H^{s-3/2}(M; \C^2)}^2 .
\ena
To conclude, we just notice that  
\bna
\| \widetilde{\Sigma} V_0 \|_{H^{s-1}(M; \C^2)}^2 & = & \| \frac{v_1}{i} + \Lambda v_0\|_{H^{s-1}(M)}^2 + \| \frac{v_1}{i} - \Lambda v_0\|_{H^{s-1}(M)}^2\\
& = & 2 \| v_1 \|_{H^{s-1}(M)}^2 + 2 \| v_0 \|_{H^{s}(M)}^2 = 4 \mathscr{E}_s(V_0) .
\ena
\enp

\section{Uniform dependence with respect to potentials}
\label{s:potential}

In this section, we allow $M$ to have a nonempty boundary $\d M$. In fact, we do not perform a high-frequency analysis but rather use as a black box a known result, for which we refer \eg to~\cite{BLR:92},~\cite{Leb:96}.
We hence now use the notation: $\H^1 = H^1_0(M) \times L^2(M)$, $\H^1 =  L^2(M) \times H^{-1}(M)$ ($H^{-1}$ being the usual dual space of $H^1_0$), and 
$$
\E_1(u , \d_t u) = \frac12 \left( \|\d_t u \|_{L^2(M)}^2 +\|\nabla u \|_{L^2(M)}^2+\| u \|_{L^2(M)}^2\right). 
$$

In Section~\ref{s:potential-lowfreq}, we first focus on obaining (from \cite{LL:15}) an explicit dependence of the low frequency estimates with respect to potentials. We then conclude the proof in Section~\ref{s:potential-fullest}.

\subsection{The low-frequency estimate}
\label{s:potential-lowfreq}
Our starting point is the following result, which is a particular case of~\cite[Theorem~6.3]{LL:15}, when there is no first order terms.
\begin{theorem}
\label{thmobserwave-bis}
For any nonempty open subset $\omega$ of $M$ and any $T> \mathcal{L}(M ,\omega)$, there exist $\eps, C, \kappa ,\mu_0>0$ such that for any $c\in L^{\infty}(M)$, any $u \in H^1((-T,T) \times M)$ solving \eqref{e:waves-b-eq-c},
we have, for any $\mu\geq \mu_0\max\{1 , \|c\|_{L^\infty}^{\frac23}\}$,
\bna
\nor{u}{L^2((-\eps ,\eps) \times M)}\leq C e^{\kappa \mu}\nor{u}{L^2((-T,T); H^1(\omega))} 
+\frac{C}{\mu}\nor{u}{H^1((-T,T) \times M)}.
\ena
If $\partial M\neq \emptyset$ and $\Gamma$ is a non empty open subset of $\partial M$, for any $T>\mathcal{L}(M ,\Gamma)$, there exist $\eps , C, \kappa,\mu_0 >0$ such that for any $u \in H^1((-T,T) \times M)$ solving~\eqref{e:waves-b-eq-c}, we have
\bna
\nor{u}{L^2((-\eps ,\eps) \times M)}\leq C e^{\kappa \mu}
\nor{\partial_{\nu}u}{L^2((-T,T)\times \Gamma)} 
+\frac{C}{\mu}\nor{u}{H^1((-T,T) \times M)}.
\ena
\end{theorem}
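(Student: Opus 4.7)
The plan is to deduce this statement directly from \cite[Theorem~6.3]{LL:15}, the point being to trace through the Carleman arguments of that reference and verify that the only $c$-dependent constraint is $\mu \geq \mu_0 \|c\|_{L^\infty}^{2/3}$. The starting point is the local Carleman estimate for the wave operator $\d_t^2 - \Delta$ established in \cite{Tataru:95,RZ:98,Hor:97} and quantified in \cite{LL:15}: for a suitable weight $\phi$ with pseudoconvex level sets, one has, on a sufficiently small neighborhood,
$$
\tau^3 \nor{e^{\tau\phi}u}{L^2}^2 + \tau \nor{e^{\tau\phi}\nabla_{t,x}u}{L^2}^2 \leq C \nor{e^{\tau\phi}(\d_t^2-\Delta)u}{L^2}^2 + \text{observation terms},
$$
valid without any assumption on $u$ beyond $H^1$ regularity and appropriate boundary conditions (Dirichlet on $\d M$ in the interior case; no boundary condition but the Neumann trace as an observation in the boundary case).

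For a solution $u$ of \eqref{e:waves-b-eq-c} one has $(\d_t^2-\Delta)u = -cu$, so the driving term is controlled by
$$
\nor{e^{\tau\phi}(\d_t^2-\Delta)u}{L^2}^2 \leq \nor{c}{L^\infty}^2 \nor{e^{\tau\phi}u}{L^2}^2,
$$
which can be absorbed into the $\tau^3\nor{e^{\tau\phi}u}{L^2}^2$ term on the left as soon as $\tau^3 \geq 2C\nor{c}{L^\infty}^2$, that is, $\tau \geq C' \nor{c}{L^\infty}^{2/3}$. This is precisely the origin of the restriction $\mu \geq \mu_0 \nor{c}{L^\infty}^{2/3}$, and is the only place where the potential enters quantitatively. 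The boundary case is identical, replacing the interior Carleman estimate by the boundary one of \cite{LL:15}.

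From this cleaned-up local Carleman estimate, the global quantitative unique continuation inequality is obtained by the standard iterative-weight strategy: one foliates a neighborhood of $M\times\{0\}$ by a family of pseudoconvex hypersurfaces emanating from (a neighborhood of) $(0,T)\times\omega$ (respectively $(0,T)\times\Gamma$), which covers the whole slab as soon as $T > \mathcal{L}(M,\omega)$ (resp. $T > \mathcal{L}(M,\Gamma)$). Applying the Carleman estimate to suitable cutoffs of $u$ along this foliation and reabsorbing the commutator errors from earlier layers produces, after splitting the right-hand side at a level $\phi = \phi_0$, an observation term multiplied by $e^{\kappa\tau}$ (coming from the high values of $\phi$ in the observation region) and an $H^1$-term multiplied by $e^{-\tau(\phi_0 - \phi_{\min})}$. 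Setting $\mu = \tau$ gives the factor $1/\mu$ by an elementary $\log$-substitution.

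The main obstacle is not conceptual but bookkeeping: one has to check that each Carleman estimate, cutoff localization, and interpolation across level sets in the construction of \cite{LL:15} is in fact uniform in $c\in L^\infty(M)$ once $\mu \geq \mu_0 \nor{c}{L^\infty}^{2/3}$. Because the potential appears \emph{only} as a right-hand side, never in the weights, the symbols, or the pseudoconvexity conditions, this uniformity is automatic and the statement follows.
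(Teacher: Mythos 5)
The paper's own proof of Theorem~\ref{thmobserwave-bis} is a one-line citation to \cite[Theorem~6.3]{LL:15} (the particular case with no first-order terms), which is precisely your starting point, and your supplementary trace of the constraint $\mu\geq\mu_0\|c\|_{L^\infty}^{2/3}$ back to absorbing $\|cu\|_{L^2}^2\leq\|c\|_{L^\infty}^2\|u\|_{L^2}^2$ into the $\tau^3$-weighted lower-order term correctly identifies the mechanism. One caution on the details: the Carleman inequality you display is not the one actually used in \cite{LL:15} --- for the wave operator the plain estimate fails across the spacelike level sets one must cross to reach the optimal time $T>\mathcal{L}(M,\omega)$ (this is the Alinhac--Baouendi obstruction that Tataru's method is designed to bypass), and the correct estimate carries a Gaussian/FBI regularization $e^{-\varepsilon D_t^2/(2\tau)}$ in the time-Fourier variable applied to every term; but since that operator acts only in $t$ and therefore commutes with multiplication by $c(x)$, your absorption computation and the resulting exponent $2/3$ carry over verbatim.
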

From this result, we may deduce, in case there is no first order terms, the following corollary which is a refined version of~\cite[Theorem~6.1]{LL:15} (in which we replace $C=C_0 e^{C_0 \|c\|_{L^\infty}}$ by $C=C_0 e^{C_0\sqrt{\|c\|_{L^\infty}}}$)
\begin{corollary}
\label{c:corollaire-QCP}
Under the same assumptions as Theorem \ref{thmobserwave-bis}, there exist $C_0, \kappa ,\mu_0>0$ such that for any $c\in L^{\infty}(M)$, any $u \in H^1((-T,T) \times M)$ solving \eqref{e:waves-b-eq-c},
we have,
for any $\mu\geq \mu_0\max\{1 , \|c\|_{L^\infty}^{\frac23}\}$,
\bna
\nor{(u_0,u_1)}{\H^0}\leq C e^{\kappa \mu}\nor{u}{L^2((-T,T); H^1(\omega))} 
+\frac{C}{\mu}\nor{(u_0,u_1)}{\H^1},
\ena
resp., in the boundary observation case,
\bna
\nor{(u_0,u_1)}{\H^0}\leq C e^{\kappa \mu}\nor{\partial_{\nu}u}{L^2((-T,T)\times \Gamma)} 
+\frac{C}{\mu}\nor{(u_0,u_1)}{\H^1},
\ena
with $C=C_0 e^{C_0\sqrt{\|c\|_{L^\infty}}}$.
\end{corollary}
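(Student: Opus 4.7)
The plan is to apply Theorem~\ref{thmobserwave-bis} and then convert its estimate (which has $\|u\|_{L^2((-\eps,\eps)\times M)}$ on the left and $\|u\|_{H^1((-T,T)\times M)}$ on the right) into one involving norms of the initial data $(u_0,u_1)$ in $\H^0$ and $\H^1$. The key new ingredient, where the improved $\sqrt{\|c\|_{L^\infty}}$ factor arises, is a sharp $\H^1$-energy bound for solutions of~\eqref{e:waves-b-eq-c}:
$$
\|(u(t), \d_t u(t))\|_{\H^1} \leq C\, e^{C\sqrt{\|c\|_{L^\infty}}|t|}\, \|(u_0, u_1)\|_{\H^1}, \qquad |t| \leq T.
$$
One obtains this by decomposing the solution along the eigenbasis of the selfadjoint operator $-\Delta + c$ (with Dirichlet b.c. when $\d M\neq\emptyset$): since its spectrum is bounded below by $-\|c\|_{L^\infty}$, each Fourier mode either oscillates (positive eigenvalue) or grows like $\cosh(\sigma_n t),\sinh(\sigma_n t)$ with $\sigma_n \leq \sqrt{\|c\|_{L^\infty}}$. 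A direct Gr\"onwall argument on the conserved quantity $\|\d_t u\|^2+\|\nabla u\|^2+(cu,u)$ only gives the weaker bound $e^{C\|c\|_{L^\infty}|t|}$, which would be insufficient.

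Given this sharp energy bound, the second term on the right-hand side of Theorem~\ref{thmobserwave-bis} is controlled by $\|u\|_{H^1((-T,T)\times M)} \leq C e^{C\sqrt{\|c\|_{L^\infty}}} \|(u_0,u_1)\|_{\H^1}$. For the left-hand side, one converts $\|u\|_{L^2((-\eps,\eps)\times M)}$ into a bound on $\|(u_0,u_1)\|_{\H^0}$ via Taylor expansion in time about $t=0$. The component $u_0 = u(0)$ is recovered from the mean $\frac{1}{2\eps}\int_{-\eps}^\eps u(t)\,dt$, with error controlled by $\eps^{1/2}\|\d_t u\|_{L^2((-\eps,\eps);L^2)}$, itself bounded via the sharp energy estimate. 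The component $u_1$ is recovered in $H^{-1}$ by testing against $\varphi \in H^1_0(M)$ and extracting $(u_1,\varphi)_{L^2}$ from the odd moment $\frac{3}{2\eps^3}\int_{-\eps}^\eps t\,(u(t),\varphi)_{L^2}\,dt$; the second-order remainder involves $\|\d_t^2 u\|_{H^{-1}} = \|\Delta u - cu\|_{H^{-1}} \lesssim (1+\|c\|_{L^\infty})\|u\|_{H^1}$.

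Combining these pieces yields an inequality of the desired form, up to a residual polynomial factor in $\|c\|_{L^\infty}$ coming from the bound on $\|cu\|_{H^{-1}}$. This polynomial factor is absorbed into the exponential $e^{\kappa\mu}$ in front of the observation term using the hypothesis $\mu \geq \mu_0 \|c\|_{L^\infty}^{2/3}$ (hence $\|c\|_{L^\infty} \leq \mu^{3/2}$), at the cost of enlarging $\kappa$. The boundary observation estimate follows in exactly the same manner from the boundary version of Theorem~\ref{thmobserwave-bis}. The main obstacle is establishing the sharp energy bound with the $\sqrt{\|c\|_{L^\infty}}$ exponent rather than $\|c\|_{L^\infty}$; this really requires the spectral decomposition of $-\Delta + c$, since the naive multiplier/Gr\"onwall argument gives only $e^{C\|c\|_{L^\infty}|t|}$ and would reproduce the weaker bound of~\cite[Theorem 6.1]{LL:15}.
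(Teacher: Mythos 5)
Your overall plan (pass from Theorem~\ref{thmobserwave-bis} to the statement using a sharp energy bound with exponent $\sqrt{\|c\|_{L^\infty}}$) is the right idea, but the left-hand-side conversion has a genuine gap. Recovering $u_0$ and $u_1$ from time moments of $u$ over $(-\eps,\eps)$ produces an inequality of the form
\begin{equation*}
\nor{(u_0,u_1)}{\H^0}\;\lesssim\;\nor{u}{L^2((-\eps,\eps)\times M)}\;+\;C(\eps,c)\nor{(u_0,u_1)}{\H^1},
\end{equation*}
where the remainder $C(\eps,c)\nor{(u_0,u_1)}{\H^1}$, coming from $\sqrt{\eps}\nor{\d_t u}{L^2((-\eps,\eps);L^2)}$ and from the second-order term $\nor{\d_t^2 u}{H^{-1}}$, is of a \emph{fixed} size once $\eps$ (given by Theorem~\ref{thmobserwave-bis}, it depends only on $T$, $\omega$) is fixed. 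When you then substitute Theorem~\ref{thmobserwave-bis}, the coefficient of $\nor{(u_0,u_1)}{\H^1}$ becomes $\frac{C}{\mu}+C(\eps,c)$, which does not tend to zero as $\mu\to\infty$. That is strictly weaker than the statement, which asserts a coefficient $C/\mu$ for all $\mu\geq\mu_0\max\{1,\|c\|_{L^\infty}^{2/3}\}$. There is no way to remove this residual term by a pointwise Taylor argument at $t=0$. The paper avoids it entirely: it establishes the \emph{clean} inequality $\nor{(u_0,u_1)}{\H^0}\leq C e^{C\sqrt{\|c\|_{L^\infty}}}\nor{u}{L^2((-\eps,\eps)\times M)}$ (the second half of \eqref{NRJintL2}, applied on the interval $(-\eps,\eps)$) by a duality argument — construct the backward inhomogeneous solution and integrate by parts, as in the proof of Lemma~\ref{lminegTE} and in the proof of Theorem~6.1 of~\cite{LL:15} — which has no $\H^1$ remainder at all. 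This is the step your proof is missing.

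Secondly, the claim that obtaining the exponent $\sqrt{\|c\|_{L^\infty}}$ in the energy bound ``really requires the spectral decomposition of $-\Delta+c$, since the naive multiplier/Gr\"onwall argument gives only $e^{C\|c\|_{L^\infty}|t|}$'' is incorrect. The paper cites the \emph{modified} energy method of~\cite{DZZ:08} (estimates (2.44), (2.50) there): one applies Gr\"onwall not to $\E_1$ but to $E_\mu=\frac12(\nor{\d_t u}{L^2}^2+\nor{\nabla u}{L^2}^2+\mu\nor{u}{L^2}^2)$ with $\mu\sim\|c\|_{L^\infty}$, giving $\frac{d}{dt}E_\mu\leq\frac{\|c\|_{L^\infty}+\mu}{\sqrt{\mu}}E_\mu\sim\sqrt{\|c\|_{L^\infty}}E_\mu$; the polynomial discrepancy between $E_\mu$ and $\E_1$ is then absorbed. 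So the sharp exponent is available by an elementary ODE argument without spectral theory. Your spectral route is also valid as an alternative proof of \eqref{NRJsqrtH1}-\eqref{NRJsqrtL2} (it gives the correct growth mode by mode), but you should not assert its necessity. Finally, a minor point: the polynomial in $\|c\|_{L^\infty}$ appearing in front of the second term cannot be absorbed into $e^{\kappa\mu}$ (it would spoil the $C/\mu$ decay); it must go into the prefactor $C=C_0e^{C_0\sqrt{\|c\|_{L^\infty}}}$, which is what the statement allows.
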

These estimates will eventually lead to the general bound of the form $\mathfrak{C}_{obs}= C\exp (\exp(C\nor{c}{L^{\infty}(M)}^{1/2}))$.
This result is a direct consequence of the following lemma of energy estimates.
\begin{lemma}
There exists $C>0$ such for any $u$ solution of \eqref{e:waves-b-eq-c}, we have
\bnan
\label{NRJsqrtH1}
\nor{(u(t),\d_t u(t))}{\H^1}\leq C e^{C|t-s|\sqrt{\|c\|_{L^\infty}}}\nor{(u(s),\d_t u(s))}{\H^1} ,\\
\label{NRJsqrtL2} \nor{(u(t),\d_t u(t))}{\H^0}\leq C e^{C|t-s|\sqrt{\|c\|_{L^\infty}}}\nor{(u(s),\d_t u(s))}{\H^0}.
\enan
For any $T>0$ there exists $C>0$ such that  for any $u$ solution of \eqref{e:waves-b-eq-c}, we have
\bnan
\label{NRJintH1}C^{-1} e^{-C\sqrt{\|c\|_{L^\infty}}} \nor{u}{H^1((-T,T) \times M)}\leq &\nor{(u_0,u_1)}{\H^1}&\leq C e^{C\sqrt{\|c\|_{L^\infty}}} \nor{u}{H^1((-T,T) \times M)} ,\\
\label{NRJintL2}C^{-1} e^{-C\sqrt{\|c\|_{L^\infty}}}\nor{u}{L^2((-T,T) \times M)}\leq &\nor{(u_0,u_1)}{\H^0}&\leq C e^{C\sqrt{\|c\|_{L^\infty}}}\nor{u}{L^2((-T,T) \times M)}.
\enan
\end{lemma}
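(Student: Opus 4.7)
The four inequalities reduce to the pointwise-in-time estimates~\eqref{NRJsqrtH1} and~\eqref{NRJsqrtL2}: once those are established, the space-time versions~\eqref{NRJintH1} and~\eqref{NRJintL2} follow by integrating over $t\in(-T,T)$ for the upper bounds, and by applying the pointwise estimates with $s=0$ and $t$ arbitrary then integrating in $t$ for the lower bounds. Setting $R:=\nor{c}{L^\infty}$, the heuristic of the whole lemma is that $A=-\Delta+c$ is self-adjoint with spectrum in $[-R,\infty)$, so that solutions of $\d_t^2 u+Au=0$ can grow at most at exponential rate $\sqrt{R}$, morally the square root of (minus) the worst negative eigenvalue of $A$.

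To establish~\eqref{NRJsqrtH1} I introduce the modified energy
\[
E(t) := \frac{1}{2}\int_M \bigl( |\d_t u|^2 + |\nabla u|^2 + c(x)|u|^2 \bigr)\,dx + (R+1)\nor{u(t)}{L^2(M)}^2,
\]
where the shift by $(R+1)\nor{u}{L^2}^2$ is chosen large enough so that, using $c\geq -R$ pointwise, one has $c_0\nor{(u(t),\d_t u(t))}{\H^1}^2\leq E(t)\leq C(R+1)\nor{(u(t),\d_t u(t))}{\H^1}^2$. Differentiating $E$ in time and using that $u$ solves~\eqref{e:waves-b-eq-c}, the ``natural'' part of the energy (coming from $\tfrac12\nor{\d_t u}{L^2}^2+\tfrac12\langle Au,u\rangle$) is conserved and only the shift contributes, giving $\frac{dE}{dt}=2(R+1)\int_M u\,\d_t u\,dx$. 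The lower bound on $E$ yields $\nor{\d_t u}{L^2}\leq\sqrt{2E}$ and $\nor{u}{L^2}\leq\sqrt{E/(R/2+1)}$, so Cauchy--Schwarz gives $|dE/dt|\leq C\sqrt{R+1}\,E$. Gr\"onwall then provides $E(t)\leq e^{C\sqrt{R+1}|t-s|}E(s)$, and returning to the $\H^1$ norm (absorbing the polynomial prefactor $\sqrt{R+1}$ into the exponential via $\sqrt{R+1}\leq e^{\sqrt{R+1}}$) yields~\eqref{NRJsqrtH1}.

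The $\H^0$ estimate~\eqref{NRJsqrtL2} is obtained by the same mechanism at one level of regularity lower: exploiting that $c$ is real and $A$ self-adjoint, the cleanest route is by duality, testing the equation against smooth data for the same wave equation and applying the $\H^1$ bound to the test function; alternatively, one applies the argument above to $w=(-\Delta+R+2)^{-1/2}u$, which solves a perturbation of~\eqref{e:waves-b-eq-c} whose zero-order error is an $L^2$-bounded commutator absorbed into the exponential. The main obstacle of the entire lemma is precisely preserving the $\sqrt{R}$ rate rather than $R$: a naive Gr\"onwall based on the unperturbed wave group yields only $e^{CR|t-s|}$, and the gain of the square root in the exponent is exactly what the modified energy, or equivalently a spectral decomposition of $A$, achieves.
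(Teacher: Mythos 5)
Your modified-energy argument for~\eqref{NRJsqrtH1} is correct and is precisely the Duyckaerts--Zhang--Zuazua computation to which the paper defers (the paper's own ``proof'' of the pointwise estimates is a citation to that reference), so that part is in order, as is the reduction of~\eqref{NRJintH1} and of the left inequality of~\eqref{NRJintL2} to the pointwise bounds by integration. Two points, however, do not go through as written.

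For~\eqref{NRJsqrtL2}, the commutator route you sketch fails for $c$ merely in $L^\infty$. Writing $\Lambda=(-\Delta+R+2)^{1/2}$ and $w=\Lambda^{-1}u$, the equation for $w$ reads $\d_t^2 w-\Delta w+\Lambda^{-1}c\,\Lambda\,w=0$, and $\Lambda^{-1}c\,\Lambda-c$ is not a bounded operator on $L^2$ in a way that is uniform in $\nor{c}{L^\infty}$: multiplication by an $L^\infty$ function does not act on $H^{-1}$, so $\Lambda^{-1}c\,\Lambda$ is not even well defined on $L^2$, and any commutator estimate would require control of derivatives of $c$, which is precisely what must be avoided. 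The duality route you mention in passing (apply the $\H^1$ bound to the adjoint solution operator for the potential $\bar c$, and dualize against the pairing between $\H^1=H^1_0\times L^2$ and $\H^0=L^2\times H^{-1}$) is the correct one and should be the argument, not an aside.

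The more serious gap is the claim that~\eqref{NRJintL2} follows from~\eqref{NRJsqrtL2} by integration. Integrating the pointwise bound with $t=0$, $s=t$ only yields
\[
2T\,\nor{(u_0,u_1)}{\H^0}^2 \;\leq\; C e^{C\sqrt{\|c\|_{L^\infty}}}\int_{-T}^{T}\nor{(u(t),\d_t u(t))}{\H^0}^2\,dt
= C e^{C\sqrt{\|c\|_{L^\infty}}}\int_{-T}^{T}\left(\nor{u(t)}{L^2(M)}^2+\nor{\d_t u(t)}{H^{-1}(M)}^2\right)dt,
\]
which controls $\nor{(u_0,u_1)}{\H^0}$ by a quantity \emph{larger} than $\nor{u}{L^2((-T,T)\times M)}^2=\int_{-T}^T\nor{u(t)}{L^2(M)}^2\,dt$: the time-derivative contribution does not appear in the space-time $L^2$ norm, and is not recovered for free. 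Eliminating the $\d_t u$ piece is exactly what requires the duality argument against a backward solution with a suitably truncated source, which is what the paper invokes at this point and carries out explicitly in the proof of Lemma~\ref{lminegTE}. Note the contrast with~\eqref{NRJintH1}, where $\nor{u}{H^1((-T,T)\times M)}^2$ does include $\int\nor{\d_t u(t)}{L^2}^2\,dt$, so integration of~\eqref{NRJsqrtH1} suffices in that case. Your reduction scheme implicitly asserts that all four one-sided estimates follow by integration; the right inequality of~\eqref{NRJintL2} is the one that does not.
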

The nontrivial part of this Lemma is in the power $1/2$ for the size of the potential. Estimates~\eqref{NRJsqrtH1} and \eqref{NRJsqrtL2} are proved in \cite{DZZ:08} using a modified energy method (see estimate (2.50) and (2.44) in that reference, see also~\cite{Zua:93}). Both estimates in~\eqref{NRJintH1} and the first part of \eqref{NRJintL2} are obtained by integration on $(-T,T)$. The second estimate of \eqref{NRJintL2} is obtained from \eqref{NRJintH1} by a duality argument (see the proof of Theorem 6.1 in \cite{LL:15}). A similar argument will be performed in the proof of Lemma \ref{lminegTE}.

\bigskip
In the case when $c$ belongs to $L^\infty_{\delta}$, the exponential dependence with respect to $c$ in the constant $\mathfrak{C}_{obs}$ can in fact be improved.
We stress the fact that potentials in $L^\infty_{\delta}$ are real-valued so that $-\Delta + c$ is selfadjoint on $L^2$. 
If  $c \in L^\infty_0$, the operator $-\Delta + c$ is nonnegative.

Setting
$$
\E_c(u , \d_t u) = \frac12 \left( \|\d_t u \|_{L^2(M)}^2 +\|\nabla u \|_{L^2(M)}^2 + \int_M c|u|^2\right) ,
$$ 
we always have
\bna
\E_c(u , \d_t u)\leq (1+\|c\|_{L^\infty(M)} )\E_1(u, \d_tu )   ,
\ena
and, if $c \in L^\infty_\delta$ with $\delta>0$ we also obtain
\bnan
\label{E_1-E_c}
 \E_1(u , \d_t u) & = & \frac12 \left( \|\d_t u \|_{L^2(M)}^2 +\| \nabla u \|_{L^2(M)}^2 + \|  u \|_{L^2(M)}^2 \right) \nonumber \\
&\leq &\frac12 \left( \|\d_t u \|_{L^2(M)}^2+ \| \nabla u \|_{L^2(M)}^2  +\delta^{-1}\left(\int_M |\nabla u|^2 + c|u|^2  \right)\right) \nonumber\\
&\leq &(1+ \delta^{-1})  \E_c(u, \d_tu ) . 
\enan
We have the following elementary Lemma which applies for any $c \in L^\infty_{\delta}\subset L^\infty_0$, $\delta\geq 0$.
\begin{lemma}
\label{lmNRJ}
Let $T>0$. Then, there exists $C_T>0$ such that for all $c \in L^\infty_0$, all $(u_0,u_1) \in H^1_0(M) \times L^2(M)$, $g \in L^1(0,T ; L^2(M))$ and $u$ associated solution of 
\bneqn
\label{wave-c-g}
\partial_t^2u-\Delta u+cu&=&g ,\\
u_{|\d M} & = & 0 , \quad  \text{if } \d M \neq \emptyset, \\
(u,\partial_t u)_{t=0}&=&(u_0,u_1) ,
\eneqn
we have the estimate
\bna
\sup_{t\in [0,T]}(\E_c(u , \d_t u))\leq C_T \left(\E_c(u_0 , u_1) + \nor{g}{L^1(0,T ;L^2(M)}^2 \right).
\ena
If moreover $g=0$, then we have $\E_c(u , \d_t u) = \E_c(u_0 , u_1) $ on $(0,T)$.
\end{lemma}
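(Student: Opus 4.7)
The plan is to exploit the nonnegativity of the modified energy $\E_c$ under the assumption $c\in L^\infty_0$, and then run the classical hyperbolic energy identity obtained by testing the equation against $\d_t u$.

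First, since $c\in L^\infty_0$, the very definition of $L^\infty_0$ gives $\int_M |\nabla u|^2 + c|u|^2 \geq 0$ for all $u\in H^1_0(M)$, so that
$$
\E_c(u,\d_t u) \;\geq\; \frac12 \|\d_t u\|_{L^2(M)}^2 \;\geq\; 0.
$$
In particular $\sqrt{\E_c}$ is well-defined and $\|\d_t u(t)\|_{L^2(M)} \leq \sqrt{2\, \E_c(u,\d_t u)(t)}$.

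Second, I would multiply \eqref{wave-c-g} by $\overline{\d_t u}$, integrate over $M$, take the real part, and use the Dirichlet condition together with $2\Re(\nabla u\cdot \nabla \overline{\d_t u}) = \d_t |\nabla u|^2$ and $2\Re(cu\overline{\d_t u}) = \d_t (c|u|^2)$ (recall $c$ is real-valued). This yields the distributional identity in $t$,
$$
\frac{d}{dt}\E_c(u,\d_t u) \;=\; \Re \int_M g\, \overline{\d_t u}\,dx \;\leq\; \|g(t)\|_{L^2(M)} \sqrt{2\,\E_c(u,\d_t u)(t)}.
$$
Dividing by $2\sqrt{\E_c}$ (and regularizing by $\sqrt{\E_c+\eps}$ to handle the zero set before letting $\eps\to 0$) gives $\frac{d}{dt}\sqrt{\E_c(u,\d_t u)} \leq \frac{1}{\sqrt{2}}\|g(t)\|_{L^2(M)}$, hence by integration
$$
\sqrt{\E_c(u(t),\d_t u(t))} \;\leq\; \sqrt{\E_c(u_0,u_1)} + \frac{1}{\sqrt{2}}\,\|g\|_{L^1(0,T;L^2(M))},\quad t\in[0,T].
$$
Squaring and using $(a+b)^2\leq 2(a^2+b^2)$ delivers the announced bound with a constant $C_T$ that is in fact independent of $T$. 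When $g=0$, the identity reduces to $\frac{d}{dt}\E_c = 0$, giving conservation of $\E_c$.

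The one nontrivial point is justifying the energy identity at the prescribed regularity: for $(u_0,u_1)\in H^1_0\times L^2$ and $g\in L^1(0,T;L^2)$, the solution lies only in $C^0([0,T];H^1_0)\cap C^1([0,T];L^2)$ and $\d_t^2 u$ is a priori only in $L^1(0,T;H^{-1})$, so the pairing $\int_M(\d_t^2 u - \Delta u + cu)\overline{\d_t u}$ is formal. This is the main technical obstacle. I would resolve it by the standard density argument: approximate $(u_0,u_1,g)$ in $H^1_0\times L^2\times L^1(L^2)$ by smooth data $(u_0^n,u_1^n,g^n)$ with $g^n\in C^0([0,T];H^1_0)$, for which well-posedness theory yields strong solutions $u^n$ with $\d_t^2 u^n\in C^0([0,T];L^2)$, write the identity rigorously for $u^n$, and pass to the limit using the continuous dependence of the resolvent map on the data in the energy space (which itself follows from the weaker a priori estimate \eqref{NRJsqrtH1} applied to the difference $u^n-u^m$). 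Nonnegativity of $\E_c$ ensures no sign issues arise upon passing to the limit.
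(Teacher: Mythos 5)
Your proposal is correct and takes essentially the same route as the paper: multiply the equation by $\overline{\d_t u}$, integrate, take the real part, use nonnegativity of $\E_c$ (which holds since $c\in L^\infty_0$), and close via a Gronwall-type argument. You simply spell out what the paper leaves implicit in ``(at least for smooth solutions)'' and ``an appropriate Gronwall inequality'', namely the regularization $\sqrt{\E_c+\eps}$ before dividing and the density/limiting argument needed at the $H^1_0\times L^2$ regularity level.
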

\bnp
Note first that $c \in L^\infty_0$ ensures that $\E_c$ is nonnegative.
Multiply the equation by $\partial_t \overline{u}$, take real part and integrate on $M$ to obtain (at least for smooth solutions)
\bna
\frac{d}{dt} \left(\E_c(u , \d_t u)\right) & = & \int_{M}g(t,x)\partial_t \overline{u}(t,x) \leq \|g(t)\|_{L^2(M)}\|\d_t u(t)\|_{L^2(M)}\\
& \leq &\|g(t)\|_{L^2(M)}  \sqrt{2 \E_c(u , \d_t u)} .
\ena
An appropriate Gronwall inequality gives the expected estimate. The case $g=0$ comes from the first identity.
\enp

Now, we prove the following bound by duality.
\begin{lemma}
\label{lminegTE}
For all $T, \eps >0$ there is $ C_{\eps,T} >0$ such that for all $\delta>0$, $c \in L^\infty_\delta$, all $(u_0,u_1) \in H^1_0(M) \times L^2(M)$, and all associated solution $u \in C^0(0,T; H^1_0(M))\cap C^1(0,T;L^2(M))$ of~\eqref{wave-c-g} with $g=0$, we have, 
\bna
\nor{u}{L^2((-T ,T) \times M)} \leq (1+\delta^{-1})^\frac12 C_{\eps,T}\nor{u}{L^2((-\eps ,\eps) \times M)} .
\ena
\end{lemma}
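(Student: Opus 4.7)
The key structural input is that the operator $P := -\Delta + c$ (with Dirichlet boundary conditions in case $\d M \neq \emptyset$) is selfadjoint and positive on $L^2(M)$: indeed $c$ is real valued since $c \in L^\infty_\delta$, and by assumption $P \geq \delta I > 0$ in the sense of quadratic forms. Hence $P$ has compact resolvent and admits a Hilbert basis $(e_k)_{k \in \N}$ of eigenfunctions, with eigenvalues $\lambda_k \geq \delta$. Writing $u_0 = \sum_k a_k e_k$, $u_1 = \sum_k b_k e_k$ and $\omega_k = \sqrt{\lambda_k} \geq \sqrt{\delta}$, the unique solution to~\eqref{wave-c-g} with $g=0$ reads
$$
u(t,x) = \sum_{k \in \N}\left( a_k \cos(\omega_k t) + \frac{b_k}{\omega_k} \sin(\omega_k t) \right) e_k(x).
$$

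Using orthonormality of $(e_k)$ in $L^2(M)$ together with the vanishing of the cross term $\int_{-\tau}^\tau \cos(\omega_k t) \sin(\omega_k t)\, dt = 0$ on symmetric intervals, I would then compute, for any $\tau > 0$,
$$
\|u\|_{L^2((-\tau,\tau) \times M)}^2 = \sum_k \left( A_k(\tau) |a_k|^2 + \frac{B_k(\tau)}{\omega_k^2} |b_k|^2 \right),
$$
where $A_k(\tau) = \tau + \frac{\sin(2\tau\omega_k)}{2\omega_k}$ and $B_k(\tau) = \tau - \frac{\sin(2\tau\omega_k)}{2\omega_k}$. The elementary upper bounds $A_k(T),B_k(T) \leq 2T$ yield the crude estimate $\|u\|_{L^2((-T,T)\times M)}^2 \leq 2T \sum_k \big(|a_k|^2 + |b_k|^2/\omega_k^2\big)$.

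For the matching lower bound on $\|u\|_{L^2((-\eps,\eps)\times M)}^2$, set $y = 2\eps\omega_k \geq 2\eps\sqrt{\delta}$ and use the two one-variable estimates $1 + \sin(y)/y \geq 1 + \inf_{z \in \R} (\sin z)/z > 0$ (uniform in $y$) and $1 - \sin(y)/y \geq c\min(1, y^2)$ on $\R^+$ (from Taylor expansion near $0$ and the trivial bound $|\sin y/y| \leq 1/|y|$ for large $y$); they translate into
$$
A_k(\eps) \geq c\,\eps, \qquad B_k(\eps) \geq c_\eps \min(1, \delta \eps^2) \geq c'_\eps\,\frac{\delta}{1+\delta},
$$
uniformly in $k \in \N$. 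Combining this lower bound with the previous upper bound gives $\|u\|_{L^2((-T,T)\times M)}^2 \leq C_{T,\eps}(1+\delta^{-1}) \|u\|_{L^2((-\eps,\eps)\times M)}^2$, and extracting square roots finishes the proof.

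The only subtle point is the estimate of $B_k(\eps)$ in the low-frequency regime $\omega_k \sim \sqrt{\delta}$, where $B_k(\eps)$ is only of size $\eps^3\delta$: this is precisely the source of the $(1+\delta^{-1})^{1/2}$ factor and the reason why the coercivity assumption $\delta > 0$ is essential. No high-frequency obstruction appears, thanks to the uniform lower bound $A_k(\eps) \geq c\eps$ and the absence of weight $\omega_k^{-2}$ on the $|a_k|^2$ coefficient.
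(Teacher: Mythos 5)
Your proof is correct, and it is a genuinely different argument from the one in the paper. The paper's proof is a duality argument: it solves the inhomogeneous backward wave problem $(\partial_t^2-\Delta+c)v = u$, truncates $v$ by a cutoff $\chi$ supported near $t=0$, and controls the resulting source term through the energy comparison $\E_1 \leq (1+\delta^{-1})\E_c$ together with Gronwall-type estimates (Lemma~\ref{lmNRJ}); the $(1+\delta^{-1})^{1/2}$ factor enters precisely through that energy comparison. You instead exploit the selfadjointness and coercivity of $-\Delta+c$ to diagonalize explicitly, reducing the inequality to one-dimensional trigonometric estimates on $A_k(\tau)=\tau+\sin(2\tau\omega_k)/(2\omega_k)$ and $B_k(\tau)=\tau-\sin(2\tau\omega_k)/(2\omega_k)$, with the crucial use of the symmetric interval to kill the cross terms. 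This is more elementary (no auxiliary PDE, no cutoff, no Gronwall), all the steps check out, and the constraint $\omega_k^2\geq\delta$ correctly produces the $(1+\delta^{-1})^{1/2}$ factor from the low-frequency bound $B_k(\eps)\gtrsim\eps\min(1,\eps^2\delta)$. One remark worth making: if instead of the crude bound $B_k(T)\leq 2T$ you compare $B_k(T)$ and $B_k(\eps)$ directly, the ratio $B_k(T)/B_k(\eps)=(T/\eps)\,F(2\omega_k T)/F(2\omega_k\eps)$ with $F(y)=1-\sin(y)/y$ is in fact bounded \emph{uniformly in $\omega_k>0$} (since $F(\lambda y)/F(y)$ is continuous on $(0,\infty)$ with finite limits $\lambda^2$ at $0$ and $1$ at $\infty$); together with the easy uniform bound on $A_k(T)/A_k(\eps)$, this yields the lemma with a constant $C_{\eps,T}$ independent of $\delta$, i.e.\ without the $(1+\delta^{-1})^{1/2}$ factor at all. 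So the spectral route is in fact strictly sharper here than the paper's energy-based argument, and would propagate to slightly better constants in Corollary~\ref{thmestimbis} and Theorem~\ref{t:dep-pot}.
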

\bnp
Define $v$ to be the unique (backward) solution to 
\bneq
(\d_t^2 -\Delta +  c) v&=&u\\
v_{\left|\partial  M\right.}&=&0  \\
(v,\partial_t v)_{\left|t=T\right.} &=&(0,0).
\eneq
By integration by parts, we have
\bnan
\label{dual-u-v}
\int_0^T \int_{M}|u|^2=\int_0^T \int_{M}u (\d_t^2 -\Delta +  c) \ovl{v}=\int_{ M}\partial_t u(0)\ovl{v}(0)-\int_{ M}u(0) \d_t \ovl{v}(0). 
\enan
But now, take $\chi\in C^{\infty}([0,T])$ with $\chi=1$ close to $0$ and $\chi=0$ for $t\in [\eps,T]$. Define $w=\chi(t)v$ solution of 
\bneq
(\d_t^2 -\Delta +  c) w&=&\chi u+2\dot{\chi}(t)\partial_t v+ \ddot{\chi}(t)v=:g=: g_1+g_2\\
w_{\left|\partial  M\right.}&=&0  \\
(w,\partial_t w)_{|t=0} &=&(v,\partial_t v)_{|t=0} ,\\
(w,\partial_t w)_{\left|t=T\right.} &=&(0,0) .\\
\eneq
with $g_1=\chi u$.
We have the estimate
\bna
\nor{g_2}{L^2((0,T)\times M)}^2 \leq C \nor{(v,\partial_t v)}{L^2([0,T],\H^1(M))}^2 = 2C \int_0^T \E_1(v,\d_t v) dt.
\ena 
Moreover,~\eqref{E_1-E_c} then yields $\E_1(v,\d_t v) \leq C(1+\delta^{-1}) \E_c(v,\d_t v)$ so that 
\bna
\nor{g_2}{L^2((0,T)\times M)}^2 \leq  C \int_0^T \E_c(v,\d_t v) dt \leq CT(1+\delta^{-1})  \sup_{t\in[0,T]} \E_c(v,\d_t v)(t) .
\ena
Then, the equation satisfied by $v$ together with Lemma \ref{lmNRJ} give
 \bna
\nor{g_2}{L^2((0,T)\times M)}^2 \leq CT(1+\delta^{-1})  \sup_{t\in[0,T]} \E_c(v,\d_t v)(t)  \leq C_T(1+\delta^{-1}) \nor{u}{L^2((0,T)\times M)}^2.
\ena
Since  $g_1=\chi u$ trivially satisfies this estimate, we  finally obtain, with $g=g_1 + g_2$ (we drop the dependence with respect to $T$ or $\eps$) 
 \bnan
 \label{estim-g-delta-T}
\nor{g}{L^2((0,T)\times M)}^2 \leq C(1+\delta^{-1}) \nor{u}{L^2((0,T)\times M)}^2.
\enan
The same computation as in~\eqref{dual-u-v} for $w$, noticing that the boundary value of $w$ are the same as $v$, yields the identity
\bna
\int_0^T \int_{M}u\ovl{g}=\int_0^T \int_{M}u (\d_t^2 -\Delta +  c) \ovl{w}=\int_{ M}\partial_t u(0)\ovl{v}(0)-\int_{ M}u(0) \d_t \ovl{v}(0). 
\ena 
Identifying this right hand-side with that of~\eqref{dual-u-v}, we therefore obtain
\bna
\int_0^T \int_{M}u\ovl{g}=\int_0^T \int_{M}|u|^2. 
\ena 
Moreover, since $g$ is supported in $[0,\eps]$, and using~\eqref{estim-g-delta-T} we have
\bna
\int_0^T \int_{M}|u|^2 & =&  \int_0^{\eps} \int_{M}u\ovl{g}\leq \nor{u}{L^2((0,\eps)\times M)}\nor{g}{L^2((0,\eps)\times M)} \\
& \leq & C (1+\delta^{-1})^\frac12\nor{u}{L^2([0,\eps]\times M)}\nor{u}{L^2((0,T)\times M)}.
\ena
and therefore $\nor{u}{L^2((0,T)\times M)}\leq C (1+\delta^{-1})^\frac12\nor{u}{L^2((0,\eps)\times M)}$. Changing $u(t)$ into $u(-t)$ also leads to $\nor{u}{L^2((-T,0)\times M)}\leq C (1+\delta^{-1})^\frac12\nor{u}{L^2((-\eps,0)\times M)}$, which concludes the proof of the lemma.
\enp
With this refined energy estimates (with respect to those of the proof of~\cite[Theorem~6.2]{LL:15}) and using the quantitative unique continuation result of Theorem~\ref{thmobserwave-bis} above, we can then prove the following result.
\begin{corollary}
\label{thmestimbis}
Let $T>\mathcal{L}(M ,\omega)$ (\resp $T>\mathcal{L}(M ,\Gamma)$). There exist $C, \kappa ,\mu_0>0$ such that for any $(u_0,u_1)\in H^1_0(M)\times L^2(M)$, for any $\delta>0$, any $c\in L^\infty_{\delta}$,
and associated solution $u$ of \eqref{e:waves-b-eq-c}, we have,
with $C_{\delta}  = C (1+ \delta^{-1/2})$,  
the estimate
\bnan
\label{estimthm1-bis}
\nor{u}{L^2((-T ,T) \times M)}
\leq C_{\delta} e^{\kappa \mu} \nor{u}{L^2((-T,T); H^1(\omega_0))} 
+ \frac{C_{\delta} }{\mu}\left(\nor{(u_0,u_1)}{\H^1(M)}+\nor{c u}{L^2((-T ,T) \times M)}\right).
\enan
\resp, the estimate
\bnan
\label{estimthm2-bis}
\nor{u}{L^2((-T ,T) \times M)}\leq 
C_{\delta}  e^{\kappa \mu} \nor{\partial_{\nu}u}{L^2((-T,T)\times \Gamma)}  
 +\frac{C_{\delta}}{\mu}\left(\nor{(u_0,u_1)}{\H^1(M)}+\nor{c u}{L^2((-T ,T) \times M)}\right).
\enan
for all $\mu\geq \mu_0\max\{1 , \|c\|_{L^\infty}^{\frac23} \}$.
\end{corollary}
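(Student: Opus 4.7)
The strategy is to combine three ingredients already at our disposal: the quantitative unique continuation result of Theorem~\ref{thmobserwave-bis} (valid for arbitrary $c \in L^\infty$), the refined $L^2\to L^2$ propagation estimate of Lemma~\ref{lminegTE} (which uses crucially the coercivity hypothesis $c\in L^\infty_\delta$), and a classical energy estimate for the inhomogeneous free wave equation. The key observation is that the hypothesis $c\in L^\infty_\delta$ should be used \emph{only} through Lemma~\ref{lminegTE}, for this is precisely what produces the square-root power $\delta^{-1/2}$ rather than $\delta^{-1}$.

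First, I would apply Lemma~\ref{lminegTE} to reduce the $L^2((-T,T)\times M)$ norm on the left-hand side to a norm over a small time interval:
\[
\|u\|_{L^2((-T,T)\times M)} \leq (1+\delta^{-1})^{1/2}\, C_{\eps,T}\,\|u\|_{L^2((-\eps,\eps)\times M)},
\]
where $\eps>0$ is the small time given by Theorem~\ref{thmobserwave-bis}. Using $(1+\delta^{-1})^{1/2}\leq 1 + \delta^{-1/2}$, this produces exactly the factor $C_\delta = C(1+\delta^{-1/2})$ appearing in the conclusion.

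Next I would apply Theorem~\ref{thmobserwave-bis} (in its interior, resp.\ boundary, version) to estimate the right-hand side above by
\[
C e^{\kappa \mu} \|u\|_{L^2((-T,T);H^1(\omega))} + \frac{C}{\mu}\|u\|_{H^1((-T,T)\times M)},
\]
valid for $\mu\geq \mu_0 \max\{1,\|c\|_{L^\infty}^{2/3}\}$. Finally I would convert the $H^1((-T,T)\times M)$ norm into the natural data norm plus the potential contribution. For this, I view $u$ as a solution of the free inhomogeneous wave equation $\d_t^2 u - \Delta u = -cu$ (with zero potential) and apply the standard energy estimate, which yields
\[
\|u\|_{H^1((-T,T)\times M)} \leq C\|(u_0,u_1)\|_{\H^1(M)} + C\|cu\|_{L^1((-T,T);L^2(M))} \leq C_T\left(\|(u_0,u_1)\|_{\H^1(M)} + \|cu\|_{L^2((-T,T)\times M)}\right).
\]
Combining these three steps produces the stated estimate. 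The boundary observation case is handled identically, using the boundary version of Theorem~\ref{thmobserwave-bis} in the second step.

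The main (conceptual) point is not any individual computation, but the choice of \emph{how} to use the $L^\infty_\delta$ hypothesis: applying Theorem~\ref{thmobserwave-bis} naively and then controlling $\|u\|_{H^1}$ via the energy estimate $\E_1 \leq (1+\delta^{-1})\E_c$ from~\eqref{E_1-E_c} together with conservation of $\E_c$ would only yield a factor $(1+\delta^{-1})$, losing the desired square root. Routing the $\delta$-dependence through the duality argument of Lemma~\ref{lminegTE} is what produces the sharper constant $C(1+\delta^{-1/2})$, which in turn is responsible for the $\exp(C(1+\delta^{-1/2})\|c\|_{L^\infty})$ bound on $\mathfrak{C}_{obs}$ in Theorem~\ref{t:dep-pot}.
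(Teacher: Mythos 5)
Your proposal is correct and follows essentially the same route as the paper: first invoke Theorem~\ref{thmobserwave-bis} on $(-\eps,\eps)$, upgrade to $(-T,T)$ via Lemma~\ref{lminegTE} (the only place $c\in L^\infty_\delta$ enters, producing the $(1+\delta^{-1})^{1/2}\leq 1+\delta^{-1/2}$ factor), and replace $\|u\|_{H^1((-T,T)\times M)}$ by $\|(u_0,u_1)\|_{\H^1}+\|cu\|_{L^2}$ using the classical hyperbolic energy estimate with $cu$ as source term. Your stating the chain in the reverse order is purely cosmetic, and your concluding remark about why Lemma~\ref{lminegTE} must carry the $\delta$-dependence accurately captures the point of the argument.
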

\bnp
We start again from the above Theorem~\ref{thmobserwave-bis}, namely, for all $\mu\geq \mu_0\max\{1 , \|c\|_{L^\infty}^{\frac23} \}$, we have
 \bna
\nor{u}{L^2((-\eps ,\eps) \times M)}\leq C e^{\kappa \mu} 
\nor{u}{L^2((-T,T); H^1(\omega))}  
+\frac{C}{\mu}\nor{u}{H^1((-T,T) \times M)}.
\ena
Lemma \ref{lminegTE} then gives
 \bna
\nor{u}{L^2((-T ,T) \times M)}\leq C_\delta e^{\kappa \mu} 
\nor{u}{L^2((-T,T); H^1(\omega))}  
+\frac{C_\delta }{\mu}\nor{u}{H^1((-T,T) \times M)}.
\ena
Then, using classical hyperbolic energy estimates, viewing $c u$ as a source term, we have
\bna
\nor{u}{H^1((-T,T) \times M)}\leq C( \nor{(u_0,u_1)}{\H^1(M)}+\nor{c u}{L^2((-T ,T) \times M)}).
\ena
Plugging this last estimate into the previous one yields the sought result.
\enp

\subsection{The full observability estimate}
\label{s:potential-fullest}

We now combine the quantitative unique continuation result of Corollary~\ref{c:corollaire-QCP} (general case) or~\ref{thmestimbis} (case $c \in L^\infty_\delta$) with this result with an observability estimate (or a relaxed observability estimate) for the wave equation without potential (used here as a black box) to prove Theorem~\ref{t:dep-pot}. The following is \eg given in~\cite{BLR:92}.

\begin{theorem}[\!\! \cite{BLR:92}] Assumes that $(\omega, T)$ satisfies GCC, \resp that $(\Gamma, T)$ satisfies GCC$_{\d}$. Then, there exist $C_0,C_1>0$ such that for any $ (w_0, w_1) \in H^1_0(M) \times L^2(M)$, and associated solution $w$ of
 \begin{equation}
\label{e:waves-b-eq}
\begin{cases}
\d_t^2 w-\Delta w = 0 , \\
w|_{\d M} =  0  , \quad \text{if } \d M \neq \emptyset \\
(w(0), \d_t w(0)) = (w_0, w_1) ,
\end{cases} 
\end{equation}
we have
\bnan
\label{e:waves-b-obs}
 \int_0^T \| w(t) \|_{H^1(\omega)}^2 dt \geq C_1 \E_1(w_0,w_1).
\enan
\resp  , 
\bnan
\label{e:waves-bb-obs}
 \int_0^T \| \d_\nu w(t) \|_{L^2(\Gamma)}^2 dt \geq C_1 \E_1(w_0,w_1).
\enan
\end{theorem}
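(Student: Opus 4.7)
The plan is to assemble the BLR theorem from two pieces already prepared in the paper (or, in the boundary case, their natural generalizations): a high-frequency relaxed observability inequality, and a low-frequency quantitative unique continuation estimate from~\cite{LL:15}. This is exactly the strategy sketched in the introduction around~\eqref{e:general-obs-relaxed} and~\eqref{e:LL15}.

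\textbf{Reducing to an observation by a smooth cutoff.} Since $(\omega,T)$ satisfies GCC, we have $T>T_{GCC}(\omega)$, and by openness of GCC in $(\omega,T)$ we may pick $\obs\in\Cinf_c(\omega)$ such that, setting $\omega_\obs=\{\obs\neq 0\}$, the pair $(\omega_\obs,T)$ still satisfies GCC; in particular $\mathfrak{K}(T)>0$. Because $\supp\obs\Subset\omega$, multiplication and commutators with derivatives yield a constant $C_\obs>0$ with
\[
 \|\obs w(t)\|_{H^1(M)}^2 \leq C_\obs\|w(t)\|_{H^1(\omega)}^2.
\]
Similarly, pick a slightly smaller open set $\omega_0\Subset\omega_\obs$ such that $T_{UC}(\omega_0)<T$; this is possible since $T_{UC}(\omega_0) \to T_{UC}(\omega_\obs) \leq T_{GCC}(\omega_\obs) < T$ as $\omega_0 \nearrow \omega_\obs$ (Lemma \ref{lmTUleqTGCC}).

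\textbf{High-frequency step.} Applying Proposition~\ref{l:obsHF} (in the case $\partial M=\emptyset$) with $s=1$, or its direct boundary counterpart obtained from propagation of microlocal defect measures along the broken bicharacteristic flow as in~\cite{BLR:92,Leb:96} (for $\partial M\neq\emptyset$), one obtains $C_0>0$ such that, for every solution $w$ of~\eqref{e:waves-b-eq},
\[
 C_\obs\int_0^T\|w(t)\|_{H^1(\omega)}^2\,dt \;\geq\; \int_0^T\|\obs w(t)\|_{H^1(M)}^2\,dt \;\geq\; \mathfrak{K}(T)\,\E_1(w_0,w_1)-C_0\,\E_{1/2}(w_0,w_1).
\]
In the boundary observation case, the same scheme is carried out with $\obs$ replaced by the relevant boundary trace operator $\partial_\nu$ on $\Gamma$, starting instead from a neighborhood $\Gamma_\obs\Subset\Gamma$ with $(\Gamma_\obs,T)$ satisfying GCC$_\d$.

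\textbf{Low-frequency step and combination.} Apply Theorem~\ref{thmobserwaveintro} (or~\cite[Theorem~6.3]{LL:15} for the boundary case) on $\omega_0$ over the time interval $(0,T)$: since $T>T_{UC}(\omega_0)$, we get, after squaring,
\[
 \E_{1/2}(w_0,w_1) \leq C\mu^2 e^{2\kappa\mu}\int_0^T\|w(t)\|_{H^1(\omega_0)}^2\,dt + \frac{C}{\mu^2}\E_1(w_0,w_1),\qquad \mu\geq\mu_0.
\]
Using $\|w\|_{H^1(\omega_0)}\leq\|w\|_{H^1(\omega)}$ (resp.\ $\|\partial_\nu w\|_{L^2(\Gamma_\obs)}\leq \|\partial_\nu w\|_{L^2(\Gamma)}$ in the boundary case) and inserting this into the high-frequency estimate gives
\[
 \bigl(C_\obs+C_0 C\mu^2e^{2\kappa\mu}\bigr)\int_0^T\|w(t)\|_{H^1(\omega)}^2\,dt \;\geq\; \Bigl(\mathfrak{K}(T)-\tfrac{C_0 C}{\mu^2}\Bigr)\E_1(w_0,w_1).
\]
Fixing $\mu$ so large that $C_0 C/\mu^2\leq \mathfrak{K}(T)/2$ yields~\eqref{e:waves-b-obs} with an explicit constant $C_1^{-1}\sim e^{2\kappa\mu}/\mathfrak{K}(T)$; the boundary version~\eqref{e:waves-bb-obs} is obtained identically from the boundary analogues of the two building blocks.

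\textbf{Main obstacle.} The delicate ingredient is the high-frequency relaxed estimate in the presence of a boundary: Proposition~\ref{l:obsHF} was only established here for $\partial M=\emptyset$, relying on global Egorov calculus and the sharp G{\aa}rding inequality. For $\partial M\neq\emptyset$ one must propagate $H^1$-microlocal defect measures through the (generalized, broken) bicharacteristic flow, deal with grazing and gliding rays, and invoke the nondiffractive meeting condition built into GCC$_\d$; this is precisely where the paper invokes~\cite{BLR:92} as a black box rather than giving a self-contained constructive proof.
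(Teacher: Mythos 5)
The paper does not give a proof of this theorem: it is stated as a known result and cited directly to~\cite{BLR:92} (and~\cite{Leb:96}), to be used as a black box in the proof of Theorem~\ref{t:dep-pot}. Your proposal is therefore not comparable to a paper proof; it is a re-derivation of the BLR result. That said, it is a reasonable one and essentially transplants the strategy of Section~\ref{s:estimate-T} (high-frequency relaxed observability combined with the quantitative unique continuation of~\cite{LL:15}, as in Theorem~\ref{th:KG-obs}) to the situation of Section~\ref{s:potential}. The gain of your route over the bare citation is that the removal of the compact remainder term $\E_{1/2}(w_0,w_1)$ becomes constructive, via Theorem~\ref{thmobserwaveintro} or~\cite[Theorem~6.3]{LL:15}, instead of relying on the compactness--uniqueness argument of~\cite{BLR:92}; this is precisely the philosophy of the paper.

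Two caveats are worth recording. First, the main obstacle you name is real and is not dispatched by your proposal: Proposition~\ref{l:obsHF} is only proved here for $\partial M=\emptyset$ (by Egorov plus sharp G{\aa}rding), and for $\partial M\neq\emptyset$ the relaxed high-frequency estimate must be imported from~\cite{BLR:92,Leb:96} via propagation of microlocal defect measures through the broken flow. Since that is the same circle of ideas the paper declines to redo, in the boundary case your proof does not actually reduce the reliance on~\cite{BLR:92}; it only replaces the low-frequency, compactness--uniqueness step. The paper explicitly acknowledges this by opening Section~\ref{s:potential} with the remark that it will \emph{not} perform a high-frequency analysis in the presence of a boundary and will instead use the known observability as a black box. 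Second, a minor attribution: the claim that one can shrink to $\omega_0\Subset\omega_\obs$ with $T_{UC}(\omega_0)$ as close as desired to $T_{UC}(\omega_\obs)$ is the content of the argument~\eqref{e:epseps} in the proof of Lemma~\ref{l:omega-0}, not of Lemma~\ref{lmTUleqTGCC} (which only gives $T_{UC}\leq T_{GCC}$). With these caveats, your assembly is correct, and it additionally yields an explicit constant $C_1\sim \mathfrak{K}(T)e^{-2\kappa\mu}$ not present in the black-box formulation, though that quantification is not needed for the statement as used in the paper.
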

\begin{remark}
Note that we only need \eqref{e:waves-b-obs}-\eqref{e:waves-bb-obs} under the relaxed form~\eqref{e:general-obs-relaxed} (i.e. with a remainder of the form $C \E_0(w_0,w_1)$). Here, it is stated as in~\cite{BLR:92}.
\end{remark}

\bnp[Proof of Theorem~\ref{t:dep-pot}]
Both estimates (boundary and internal observation) are proved the same way, so we only detail e.g. the internal case. We only give details when the proof is different. 

With $w$ solution of~\eqref{e:waves-b-eq} and $v$ solution of~\eqref{e:waves-b-eq-c}, starting from the same initial data $V_0 = (v_0,v_1) = (w_0 ,w_1)$, we have, setting $z = w -v $, 
\begin{equation}
\label{e:waves-b-eq-z}
\begin{cases}
\d_t^2 z -\Delta z = c(x) v, \\
z|_{\d M} =  0  , \quad \text{if } \d M \neq \emptyset \\
 (z, \d_t z)|_{t=0} = (0,0) .
 \end{cases} 
\end{equation}
Then, the hyperbolic energy estimates for $z$ yield 
\bna
\int_{0}^T \| z (t)\|_{H^1(\omega)}^2 dt\leq C\|z\|_{L^\infty(0,T; H^1(M))}^2 
\leq  C \|c \,  v\|_{L^2((0,T); L^2(M))}^2.
\ena
In the case of boundary observation, we will use instead the hidden regularity of the wave equation (see for instance Theorem 4.1 p44 of Lions \cite{Lio:88} in the flat case)
\bna
\int_0^T \| \d_\nu z(t) \|_{L^2(\Gamma)}^2 dt 
\leq  C \|c \,  v\|_{L^1((0,T); L^2(M))}^2.
\ena
Hence, from the observability estimate~\eqref{e:waves-b-obs}, we obtain
\bna
2\int_{0}^T \| z (t)\|_{H^1(\omega)}^2 dt + 2\int_{0}^T \| v (t)\|_{H^1(\omega)}^2 dt \geq \int_{0}^T \| w (t)\|_{H^1(\omega)}^2 dt \geq C_1  \E_1(V_0),
\ena
and hence
\bnan
\label{eq: obs HF-pot}
\int_0^T \|  v (t)\|_{H^1(\omega)}^2 dt  \geq C_0 \E_1(V_0) - C\|c \,  v\|_{L^2((0,T); L^2(M))}^2 .
\enan
Note that we obtain the same estimate for the boundary observation and the reasoning will be exactly the same up to now. So, we only detail the internal case.

Next, in the general case $c \in L^\infty$, we write 
$$
\|c \,  v\|_{L^2((0,T); L^2(M))}^2 \leq \|c\|_{L^\infty}^2 \|v\|_{L^2((0,T); L^2(M))}^2 \leq C \exp(C \|c\|_{L^\infty}^\frac12)\E_0(V_0),
$$
 according to~\eqref{NRJintL2}. 
 Note then that $(\omega, T)$ satisfies GCC implies that $T> T_{UC}(\omega)$ (resp., that $(\Gamma, T)$ satisfies GCC$_\d$ implies that $T> T_{UC}(\Gamma)$) as in the boundaryless case, see Remark~\ref{rem-boundary-T-T}.
 Hence, this estimate, together with~\eqref{eq: obs HF-pot} and Corollary~\ref{c:corollaire-QCP}, yields
\bnan
\label{eq: obs HF-pot-2}
C \exp(C \|c\|_{L^\infty}^\frac12)  e^{\kappa \mu}\int_0^T \|  v (t)\|_{H^1(\omega)}^2 dt 
+\frac{C \exp(C \|c\|_{L^\infty}^\frac12) }{\mu^2}\E_1(V_0) 
+ \int_0^T \|  v (t)\|_{H^1(\omega)}^2 dt  \geq C_0 \E_1(V_0) .
\enan
for any $\mu\geq \mu_0\max\{1 , \|c\|_{L^\infty}^{\frac23}\}$. 
This yields the sought result in the general case $c \in L^\infty$, after having taken $\mu \geq \frac{C}{2C_0} \exp(\frac{C}{2} \|c\|_{L^\infty}^\frac12) $.

\bigskip
From now on, we consider the case $c \in L^\infty_\delta$. The strategy is slightly different. Considering $w$ the solution of~\eqref{e:waves-b-eq} coinciding with $v$ at time $t=T/2$ (instead of $t=0$), we obtain similarly
\bnan
\label{eq: obs HF-pot-bis}
\int_0^T \|  v (t)\|_{H^1(\omega)}^2 dt  \geq C_0 \E_1(v, \d_tv)(T/2) - C\|c \,  v\|_{L^2((0,T); L^2(M))}^2 .
\enan
This uses the observability estimate~\eqref{e:waves-b-obs} together with the fact that $w$ satisfies $\E_1(w, \d_tw)(t) \leq C\E_1(w, \d_tw)(0)$. 

We may now use the quantitative unique continuation result of Corollary~\ref{thmestimbis} to get rid of the term $\|c \,  v\|_{L^2((-T,T); L^2(M))}^2$. Corollary~\ref{thmestimbis} (applied on the time interval $(0,T)$ instead of $(-T,T)$) yields the existence of $C, \kappa ,\mu_0>0$ such that for any $c \in L^\infty(M)$, any $v$ solution of~\eqref{e:waves-b-eq-c}, and any $\mu€ \geq \mu_0\max\{1 , \|c\|_{L^\infty}^{\frac23} \}$, we have, with $V(T/2) = (v,\d_t v)(T/2)$, 
\bna
\|c \,  v\|_{L^2((0,T)\times M)} & \leq & \|c\|_{L^\infty}\|\,  v\|_{L^2((0,T) \times M)} \\
& \leq & C_{\delta} \|c\|_{L^\infty} e^{\kappa \mu} \nor{v}{L^2((0,T); H^1(\omega))} 
+ \frac{C_{\delta}\|c\|_{L^\infty} }{\mu}\left(\nor{V(T/2)}{\H^1(M)}+\nor{c u}{L^2((0 ,T) \times M)}\right).
\ena
So, for $\mu \geq 2C_{\delta}\|c\|_{L^\infty}$, we obtain
\bna
\|c \,  v\|_{L^2((0,T)\times M)} \leq C_{\delta} \|c\|_{L^\infty} e^{\kappa \mu} \nor{v}{L^2((0,T); H^1(\omega))} 
+ \frac{C_{\delta}\|c\|_{L^\infty} }{\mu} \nor{V(T/2)}{\H^1(M)} .
\ena
Plugging this into~\eqref{eq: obs HF-pot-bis} yields
\bna
\E_1(V(T/2)) \leq 
C_\delta^2 (1+\|c\|_{L^\infty})^2 \left(e^{2\kappa \mu}  \int_{0}^T \|  v (t)\|_{H^1(\omega)}^2 dt 
+ \frac{1}{\mu^2}\E_1(V(T/2))\right)  .
\ena
We now take $\mu€ = \max\{\mu_0  , \mu_0 \|c\|_{L^\infty}^{\frac23}, 2C_{\delta}\|c\|_{L^\infty}, \sqrt{2}C_\delta (1+\|c\|_{L^\infty}) \}$ so that to absorb the last term in the right handside, and finally obtain
\bna
\E_1(V(T/2)) \leq  
 C e^{C_{\delta} \|c\|_{L^\infty}}  \int_0^T \|  v (t)\|_{H^1(\omega)}^2 dt .
\ena
 Using now e.g. \eqref{NRJsqrtH1} implies $\E_1(V(0)) \leq C e^{\|c\|_{L^\infty}^\frac12} \E_1(V(T/2))$, which concludes the proof of the theorem.
\enp

\appendix
\section{Pseudodifferential calculus}
\label{s:pseudocalc}
\subsection{Remainder of elementary facts}
\label{s:pseudo-elementaire}
We define $S_{\phg}^{m}(T^* M)$, as the set of polyhomogeneous symbols of order $m$
on $M$. We recall that symbols in the class $S_{\phg}^{m}(T^* \R^n)$ behave well
with respect to changes of variables, up to symbols in $S_{\phg}^{m-1}(T^* \R^n )$ (see \cite[Theorem~18.1.17 and Lemma~18.1.18]{Hoermander:V3}).

We denote by $\Psi_{\phg}^{m}(M)$, the space of polyhomogeneous pseudodifferential operators of order
$m$ on $M$: one says that $A \in \Psi_{\phg}^{m}(M)$ if 
\begin{enumerate}
  \item its kernel $K_A \in \D'(M \times M)$ is smooth away from the diagonal $\Delta_{M} = \{ (x,x);\ x \in M\}$;
  \item for every coordinate patch $M_{\kappa} \subset M$ with
    coordinates $ M_{\kappa} \ni x \mapsto \kappa (x) \in
    \tilde{M}_{\kappa} \subset \R^{n}$ and all $\phi_0$, $\phi_1
    \in \Cinfc(\tilde{M}_{\kappa})$ the map 
    $$
    u \mapsto \phi_1 \big(\kappa^{-1}\big)^\ast A \kappa^\ast (\phi_0 u)
    $$
    is in $\Op(S_{\phg}^{m}(T^* \R^{n}))$.
\end{enumerate}

For $A \in \Psi_{\phg}^{m}(M)$, we denote by
$\sigma_m(A) \in S_{\phg}^{m}(T^* M)$ the
principal symbol of $A$ (see~\cite[Chapter 18.1]{Hoermander:V3}). Note
that the principal symbol is uniquely defined in $S_{\phg}^{m}(T^* M)$ because of the polyhomogeneous structure (see the remark following Definition~18.1.20 in~\cite{Hoermander:V3}).  Also, the map $\sigma_m : \Psi_{\phg}^{m}(M)\to S_{\phg}^{m}(T^* M)$ is onto (it suffices to construct a quantization on $T^* M$ by means of local charts, see for instance the discussion after Definition 18.1.20 in \cite{Hoermander:V3}).

\bigskip
  At places we shall need to consider pseudodifferential
  operators acting on $M$ yet depending upon the parameter $t \in
  (0,T)$ with some smoothness with respect to $t$. 
  Here, we follow \cite{DLRL:13} for the definitions and notation.
 Let $k \in \N \cup
  \{ \infty\}$, we say that $A_t \in \Con^k\big((0,T),
  \Op(S_{\phg}^m(\R^n\times \R^n))\big)$ if $A_t = \Op(a_t)$ with $a_t \in
  \Con^k((0,T), S_{\phg}^m(\R^n\times \R^n))$.  Next we say that $A_t
  \in \Con^k((0,T), \Psi_{\phg}^m(M))$ if
  \begin{enumerate}
  \item its kernel $K_{A_t}(x,y)$ is in $\Con^k\big((0,T); \Con^\ell (M \times M \setminus \Delta_M)\big)$ for all $\ell \in \N$; 
  \item for every coordinate patch $M_\kappa \subset M$ with
    coordinates $M_{\kappa} \ni x \mapsto \kappa(x) \in
    \tilde{M}_{\kappa} \subset \R^{n}$ and all $\phi_0$, $\phi_1
    \in \Cinfc(\tilde{M}_{\kappa})$ the map 
    $$
    u \mapsto \phi_1 \big(\kappa^{-1}\big)^\ast A_t \kappa^\ast (\phi_0 u)
    $$
    is in $\Con^k\big((0,T), \Op(S_{\phg}^m(T^* \R^n))\big)$.
\end{enumerate}

\bigskip
Let us now recall some basic facts concerning the first order hyperbolic Cauchy problem.
The following result can be adapted from~\cite[Chapter~XXIII]{Hoermander:V3}.
\begin{theorem}
\label{t:hormander-hyp}
Let $\mathcal{I}\subset \R$ be a compact interval and take $\sigma \in \R$. Assume $H(t) \in \Con^0(\mathcal{I}; \Psi^1_{\phg}(M))$ has real principal symbol. Then, there exists $C>0$ such that for all $f \in L^1(\mathcal{I};H^\sigma (M) )$, all $s \in \mathcal{I}$ and all $u_0 \in H^\sigma (M)$, the Cauchy problem
\bnan
\label{e:hyp-cauch-pb}
\begin{cases}
 \d_t u (t) - i H(t) u(t) =f(t), \qquad t \in \mathcal{I} , \\
 u|_{t = s} = u_0 .
 \end{cases}
\enan
has a unique (distribution) solution $u \in \Con^0(\mathcal{I} ;H^\sigma (M))$, that satisfies
\bna
\| u \|_{L^\infty(\mathcal{I} ;H^\sigma (M))} \leq C \| u_0 \|_{ H^\sigma (M)} +C \| f \|_{L^1(\mathcal{I} ;H^\sigma (M))} .
\ena
If moreover $f=0$, then  $u \in \Con^1(\mathcal{I} ;H^{\sigma-1} (M))$.
\end{theorem}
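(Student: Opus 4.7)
The plan is to follow the classical strategy for symmetric hyperbolic Cauchy problems, reducing to an $L^2$ energy estimate via conjugation by $\Lambda^\sigma$, and then obtaining existence by a standard regularization/approximation argument.

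First I would establish the a priori energy estimate at the level $\sigma = 0$. The key point is that since $\sigma_1(H(t))$ is real, the operator $H(t) - H(t)^*$ has order $0$, so that $i H(t) - (iH(t))^* = i(H(t) - H(t)^*) \in \Con^0(\mathcal{I};\Psi^0_{\phg}(M))$ is uniformly bounded on $L^2(M)$. For a smooth enough solution $u$ of \eqref{e:hyp-cauch-pb}, one then computes
\[
\frac{d}{dt}\|u(t)\|_{L^2}^2 = \bigl((iH(t) - (iH(t))^*)u(t),u(t)\bigr)_{L^2} + 2\Re\bigl(f(t),u(t)\bigr)_{L^2}
\leq C \|u(t)\|_{L^2}^2 + 2\|f(t)\|_{L^2}\|u(t)\|_{L^2},
\]
and Gronwall's lemma yields the sought estimate for $\sigma=0$.

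Second, I would pass from $\sigma=0$ to general $\sigma \in \R$ by conjugation. Setting $v = \Lambda^\sigma u$, the function $v$ solves $(\d_t - i\tilde H(t))v = \Lambda^\sigma f$ with $\tilde H(t) = \Lambda^\sigma H(t) \Lambda^{-\sigma}$. By the standard composition calculus, $\tilde H(t) \in \Con^0(\mathcal{I};\Psi^1_{\phg}(M))$ has the same (real) principal symbol as $H(t)$, so the $L^2$ estimate applies to $v$ and gives the $H^\sigma$ estimate for $u$.

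Third, for existence, I would regularize. For $\eps >0$, replace $H(t)$ by $H_\eps(t) = H(t) J_\eps$ where $J_\eps$ is a smoothing approximation of the identity (e.g.\ $J_\eps = (1-\eps\Delta)^{-1}$), so that $H_\eps(t)$ is bounded on every $H^\sigma(M)$ and standard ODE theory in Banach spaces yields a solution $u_\eps \in \Con^1(\mathcal{I};H^\sigma(M))$. The energy estimate above applied uniformly in $\eps$ (the constant depends only on $\sup_{t\in \mathcal{I}}\|H(t)-H(t)^*\|_{\L(L^2)}$, which is uniform w.r.t.\ $\eps$) gives a uniform bound in $\Con^0(\mathcal{I};H^\sigma(M))$, and passing to the limit $\eps \to 0$ produces a distributional solution with the desired regularity. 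Uniqueness is immediate from the $\sigma=0$ energy estimate applied to the difference of two solutions. Finally, if $f=0$, the equation itself gives $\d_t u = i H(t) u \in \Con^0(\mathcal{I};H^{\sigma -1}(M))$ since $H(t): H^\sigma \to H^{\sigma-1}$ continuously and depends continuously on $t$, whence $u \in \Con^1(\mathcal{I};H^{\sigma-1}(M))$.

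The only nontrivial step is the $L^2$ energy identity, which relies crucially on the reality of the principal symbol (equivalently, on $H-H^*$ being of order zero); everything else is either routine composition calculus or a standard regularization-compactness argument. Since this is a well-known result of Hörmander~\cite[Chapter~XXIII]{Hoermander:V3}, a reference suffices for a fully detailed proof; the plan above indicates how to adapt it to the present compact-manifold, parameter-dependent framework.
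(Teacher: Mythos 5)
Your plan reproduces the classical energy-method proof that the paper simply cites from H\"ormander~\cite[Chapter~XXIII]{Hoermander:V3} (and is the intended one): $L^2$ Gronwall estimate using that $H-H^*$ has order $0$, conjugation by $\Lambda^\sigma$ to pass to general $\sigma$, mollifier regularization for existence, and the equation itself for the $\Con^1(\mathcal{I};H^{\sigma-1})$ regularity when $f=0$. One small algebraic slip should be corrected: since $(iH)^*=-iH^*$, the operator appearing in $\frac{d}{dt}\|u\|_{L^2}^2=2\Re(\d_t u,u)$ is $iH+(iH)^*=i(H-H^*)$ (order $0$), not $iH-(iH)^*$, which equals $i(H+H^*)$ and is of order $1$; both the inline equality and the displayed formula should carry a plus sign between $iH$ and $(iH)^*$. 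Also, when you regularize, the uniform-in-$\eps$ bound for $H_\eps-H_\eps^*$ on $L^2$ does not follow solely from $\|H-H^*\|_{\L(L^2)}$; writing $H J_\eps - J_\eps H^* = (H-H^*)J_\eps + [H^*,J_\eps]$, you also need a Friedrichs-type commutator estimate $\sup_\eps\|[H^*,J_\eps]\|_{\L(L^2)}<\infty$. This is standard and is exactly one of the ``commutator estimates'' the paper alludes to after the statement, but it should be mentioned if the proof is to be self-contained.
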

The constant $C$ essentially depends on a uniform bound on $\|H(t) - H^*(t) \|_{L^\infty(\mathcal{I} ;\L(H^\sigma (M)))}$ and commutator estimates.
The fact that $C$ does not depend on the initial time $s$ follows from the proof of~\cite[Lemma~23.1.1]{Hoermander:V3}.

Note also that, in case $f=0$, the regularity $\Con^1(\mathcal{I} ;H^{\sigma-1} (M))$ of the solution $u$ implies that~\eqref{e:hyp-cauch-pb} is in fact an equality of functions  in $\Con^0(\mathcal{I} ;H^{\sigma-1} (M))$.

\medskip

As a consequence of this theorem, for all $t,s \in \mathcal{I}$, there is a bounded linear solution map $S(t,s) \in \L(H^\sigma (M))$ (for any $\sigma \in \R$), given by $u_0 \mapsto u(t)$, where $u$ is the unique solution to~\eqref{e:hyp-cauch-pb} with $f=0$. We recall that the space $\Bo( I ;\L(B_1; B_2))$ is defined in Definition~\ref{def:Bo}.
As a consequence of~Theorem~\ref{t:hormander-hyp}, the solution operator $S(t,s)$ enjoys in particular the following regularity properties.

\begin{corollary}
\label{cor:S(t,s)}
With the notations and assumptions of Theorem~\ref{t:hormander-hyp}, we have
\begin{enumerate}
\item $S(t , s) \in  \Bo(\mathcal{I}\times \mathcal{I} ;\L(H^\sigma (M)))$ for all $\sigma \in \R$;
\item the linear operator $\d_t S(t , s) : u_0 \mapsto  \d_t \big( S(t , s) u_0 \big)$ satisfies $\d_t S(t , s) \in  \Bo(\mathcal{I}\times \mathcal{I} ;\L(H^\sigma (M); H^{\sigma - 1}(M)))$ for all $\sigma \in \R$ together with 
$\d_t S(t,s) - iH(t)S(t,s) = 0 $, $S(s, s) = \id$;
\item we have $S(t , s)S(s , t) = \id$ for all $(s , t) \in \mathcal{I}\times \mathcal{I}$;
\item for all $u_0 \in H^\sigma (M)$ and $t\in \mathcal{I}$, the application $s \mapsto S(t , s) u_0$ is in $\Con^0(\mathcal{I} ; H^{\sigma}(M)) \cap \Con^1(\mathcal{I} ; H^{\sigma - 1}(M))$ and, defining the linear operator $\d_s S(t , s) : u_0 \mapsto  \d_s \big( S(t , s) u_0 \big)$, it satisfies $\d_s S(t , s) \in  \Bo(\mathcal{I}\times \mathcal{I} ;\L(H^\sigma (M); H^{\sigma - 1}(M)))$ for all $\sigma \in \R$ together with 
$\d_s S(t,s) + iS(t,s)H(s) = 0$.
\end{enumerate}
\end{corollary}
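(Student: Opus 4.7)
The plan is to derive parts (1) and (2) as a direct consequence of Theorem~\ref{t:hormander-hyp}, then to establish the two-parameter flow identity $S(t,s_0)S(s_0,s') = S(t,s')$ using the uniqueness statement in that theorem---which gives (3) immediately by taking $t=s'$---and finally to use the flow identity to extract $s$-continuity and the equation for $\d_s S$ in part (4). The main technical point will be that continuity in $s$ at the $H^\sigma$-level cannot be read directly from the Cauchy problem; it will require a short approximation argument based on the uniform $H^\sigma$-bound together with stronger regularity at the $H^{\sigma+1}$-level.

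\emph{Parts (1) and (2).} For fixed $s \in \mathcal{I}$ and $u_0 \in H^\sigma(M)$, Theorem~\ref{t:hormander-hyp} applied with $f=0$ produces a unique solution $u \in \Con^0(\mathcal{I};H^\sigma) \cap \Con^1(\mathcal{I};H^{\sigma-1})$ of $\d_t u = iH(t)u$, $u(s) = u_0$, with $\|u\|_{L^\infty(\mathcal{I};H^\sigma)} \leq C\|u_0\|_{H^\sigma}$ and $C$ independent of $s$. Setting $S(t,s)u_0 := u(t)$ then yields both the uniform operator-norm bound on $S(t,s)$ in $\L(H^\sigma)$ and the continuity of $t \mapsto S(t,s)u_0$ in $H^\sigma$. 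The equation $\d_t S(t,s)u_0 = iH(t)S(t,s)u_0$ lives in $\Con^0(\mathcal{I};H^{\sigma-1})$ with $\|\d_t S(t,s)u_0\|_{H^{\sigma-1}} \leq C\|u_0\|_{H^\sigma}$, since $H(t) \in \Bo(\mathcal{I}; \L(H^\sigma; H^{\sigma-1}))$; evaluating at $t=s$ gives $S(s,s) = \id$.

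\emph{Flow identity and part (3).} For any $t, s', s_0 \in \mathcal{I}$ and $u_0 \in H^\sigma$, the two maps $r \mapsto S(r,s_0)[S(s_0,s')u_0]$ and $r \mapsto S(r,s')u_0$ both solve the homogeneous Cauchy problem $\d_r z = iH(r)z$ with the same data $S(s_0,s')u_0$ at $r = s_0$; by the uniqueness part of Theorem~\ref{t:hormander-hyp} they agree on $\mathcal{I}$, so $S(t,s_0)S(s_0,s') = S(t,s')$. Specializing to $t = s'$ yields $S(s',s_0)S(s_0,s') = \id$, which is (3).

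\emph{Continuity in $s$ and $\d_s S$, part (4).} Writing $S(t,s)u_0 - S(t,s_0)u_0 = S(t,s_0)[S(s_0,s)u_0 - u_0]$ through the flow identity reduces the matter to controlling $S(s_0,s)u_0 - u_0$ as $s \to s_0$. For $u_0 \in H^{\sigma+1}$, the fundamental theorem of calculus applied in $\Con^0(\mathcal{I};H^\sigma)$ (thanks to the extra regularity) gives
\[
S(s_0,s)u_0 - u_0 = \int_{s}^{s_0} iH(\tau) S(\tau,s) u_0 \, d\tau,
\]
hence $\|S(s_0,s)u_0 - u_0\|_{H^\sigma} \leq C|s_0 - s|\|u_0\|_{H^{\sigma+1}}$. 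Combined with the uniform $\L(H^\sigma)$-bound from (1) and the density of $H^{\sigma+1}$ in $H^\sigma$, a three-$\eps$ argument yields strong continuity of $s \mapsto S(t,s)u_0$ in $H^\sigma$ for every $u_0 \in H^\sigma$, finishing the proof of (1). Dividing the same identity by $s - s_0$ and passing to the limit $s \to s_0$ shows the limit exists in $H^{\sigma-1}$ and equals $iH(s_0)u_0$; applying $S(t,s_0)$ on the left then gives $\d_s S(t,s)u_0\big|_{s=s_0} = -iS(t,s_0)H(s_0)u_0$ for $u_0 \in H^{\sigma+1}$. The a priori bound $\|S(t,s)H(s)u_0\|_{H^{\sigma-1}} \leq C\|u_0\|_{H^\sigma}$ extends this identity and the associated $\Bo$-property to arbitrary $u_0 \in H^\sigma$, while continuity of $\d_s S(t,s)u_0 = -iS(t,s)H(s)u_0$ in $t$ follows from (1) read at the $H^{\sigma-1}$-level.
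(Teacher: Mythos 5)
Your proof is correct in substance and its overall structure matches what the paper intends: read parts (1) and (2) off Theorem~\ref{t:hormander-hyp}, obtain the flow identity from uniqueness (giving (3)), and then leverage the flow identity to transfer regularity in the first variable to regularity in the second.

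One small slip worth noting: after dividing $S(s_0,s)u_0 - u_0 = \int_s^{s_0} iH(\tau)S(\tau,s)u_0\,d\tau$ by $s-s_0$, the limit is $-iH(s_0)u_0$, not $iH(s_0)u_0$ (the integral runs from $s$ to $s_0$, so the quotient is $-\tfrac{1}{s_0-s}\int_s^{s_0}$). Your final formula $\d_s S(t,s) = -iS(t,s)H(s)$ is nonetheless the correct one; just fix the intermediate sign.

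The route also differs from the paper's in one spot. For the $s$-continuity of $S(t,s)u_0$ in $H^\sigma$ (the part of (1) and (4) that is not immediate), you use FTC at the $H^{\sigma+1}$ level plus density and a three-$\eps$ argument. The paper instead reads this off the flow identity and the a priori bound directly: with $z(\tau) := S(\tau,s)u_0 - S(\tau,s_0)u_0$, both terms solve the homogeneous problem, so the uniform estimate gives $\|z(s_0)\|_{H^\sigma} \le C\|z(s)\|_{H^\sigma} = C\|u_0 - S(s,s_0)u_0\|_{H^\sigma}$, and the right-hand side tends to $0$ by continuity in the \emph{first} variable, which is already known. This avoids the approximation by more regular data. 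Both arguments are valid; the paper's is slightly more economical, while yours has the advantage of producing the quantitative modulus $\|S(s_0,s)u_0-u_0\|_{H^\sigma}\le C|s-s_0|\|u_0\|_{H^{\sigma+1}}$, which in turn yields the difference-quotient computation for $\d_s S$ cleanly. For the identity $\d_s S(t,s)+iS(t,s)H(s)=0$ itself, the paper differentiates $S(t,s)S(s,t)=\id$ in $s$ and rearranges; your difference-quotient calculation is an equivalent, somewhat more hands-on version. The final approximation step for general $u_0\in H^\sigma$ would benefit from one more line (uniform convergence of the derivatives $-iS(t,\cdot)H(\cdot)u_n$ to $-iS(t,\cdot)H(\cdot)u_0$ in $H^{\sigma-1}$, hence convergence of $\Con^1$ functions with convergent derivatives), but the idea is right.
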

Points  (1), (2) and (3) are direct consequences of Theorem~\ref{t:hormander-hyp}. Beware that $\d_t S(t , s)$ is not a derivative in the Banach space $\L(H^\sigma (M); H^{\sigma - 1}(M))$.
Point (4) follows from point (3) and the regularity properties of $S(t,s)$ with respect to $t$ (given in points (1) and (2)). The equation satisfied by $\d_s S(t,s)$ comes from the fact that $\d_2 S(t,s) S(s,t)= - S(t,s) \d_1 S(s,t)$ (where $\d_1$ and $\d_2$ stand for derivatives with respect to the first and second variables respectively).

\medskip
Note also that we have, for any $v \in \Con^0(\mathcal{I} ; H^{\sigma}(M)) \cap \Con^1(\mathcal{I} ; H^{\sigma - 1}(M))$ the formula:
$$
\d_t(S(t,s)v(t)) = \d_tS(t,s) v(t)  + S(t,s)\d_tv(t)  .
$$

\subsection{A non-autonomous non-selfadjoint Egorov theorem}
\label{s:egorov}
In the main part of the paper, we use the following non-selfadjoint non-autonomous version of the Egorov theorem. A semiclassical version of such a result in the autonomous case can be found in~\cite{Royer:10,Royer:these}.
\begin{theorem}
  \label{theorem: Egorov}
 Let $T>0$ and $H(t) \in \Cinf(0,T; \Psi^1_{\phg}(M))$ having real principal symbol $a_1 \in \Cinf(0,T; S^1_{\phg}(T^* M))$. Denote by $A_1(t) := \frac12 (H(t) +H(t)^*)\in\Cinf(0,T; \Psi^1_{\phg}(M))$  (the adjoints are taken in $L^2(M)$) and $A_0(t) := \frac{1}{2i} (H(t) - H(t)^*) \in\Cinf(0,T; \Psi^0_{\phg}(M))$, both selfadjoint for all $t \in [0,T]$, that satisfy $H(t)=A_1(t) + i A_0(t)$. 
 Both $a_1 = \sigma_1(A_1) \in \Cinf(0,T; S^1_{\phg}(T^* M))$, and  $a_0 = \sigma_0(A_0) \in \Cinf(0,T; S^0_{\phg}(T^* M))$ are real valued functions.
Denote by $S(t,s)$ the solution operator associated to $\d_t - i H(t)$, that is $S(s',s) u_0 = u(s')$ where 
  $$
 \d_t u (t) - i H(t) u(t) =0, \qquad u|_{t = s} = u_0 .
 $$ 
 Then, for any $P_m (s)\in  \Cinf  \big((0,T), \Psi^m_{\phg}(M) \big)$, $m \in
 \R$, there exist $Q(t,s) \in \Cinf  \big((0,T)^2, \Psi^m_{\phg}(M)\big)$ 
 and 
 \bna
 R(t,s) \in \Bo \big((0,T)^2, \L(H^\sigma(M), H^{\sigma +1-m}(M))\big) 
\\
\d_t R(t,s) , \d_s R(t,s)  \in \Bo \big((0,T)^2, \L(H^\sigma(M), H^{\sigma -m}(M))\big)
 \ena
for all $\sigma \in \R$, such that we have 
 $$
S(s,t)^*  P_m(s) S(s,t)  - Q(t,s) = R(t,s) , \qquad (t,s) \in (0,T)^2 .
 $$ 
Moreover, the principal symbol of $Q(t,s)$ is given by 
\bnan
\label{e:symbol-egorov}
 q(t,s , \rho)= p_m(s, \chi_{s,t}(\rho) ) e^{2\int_s^t a_0 (\tau, \chi_{\tau,t}(\rho) ) d\tau }\quad \in
 \Cinf((0,T)^2, S^m_{\phg}(T^\ast M))
 \enan
 where $p_m(s, \cdot) = \sigma_m( P_m(s))$, and $\chi_{s,t}(\rho_0) =\rho(s,t)$ is given by the flow of the
 Hamiltonian vector field associated with~$-a_{1}(s)$:
 $$
 \frac{d}{d s} \rho(s,t) = H_{-a_{1}(s)} (\rho(s,t)), \qquad \rho(t,t) = \rho_0 \in T^*M. 
 $$
\end{theorem}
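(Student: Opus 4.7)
The plan is to realize $\tilde P(t,s) := S(s,t)^* P_m(s) S(s,t)$ as a solution to a Heisenberg-type equation, construct $Q(t,s)$ as a quantization of the symbol~\eqref{e:symbol-egorov}, and identify $R(t,s) = \tilde P(t,s) - Q(t,s)$ via Duhamel's formula, obtaining the claimed gain of one Sobolev derivative.

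First I would differentiate $\tilde P(t,s)$ with respect to $t$. Using Corollary~\ref{cor:S(t,s)}, which gives $\partial_t S(s,t) = -iS(s,t) H(t)$, and taking adjoints in $L^2$, I obtain
$$\partial_t \tilde P(t,s) = iH(t)^* \tilde P(t,s) - i\tilde P(t,s) H(t) .$$
Decomposing $H(t) = A_1(t) + iA_0(t)$ with both $A_j(t)$ selfadjoint, this rewrites as
$$\partial_t \tilde P(t,s) = i[A_1(t), \tilde P(t,s)] + \{A_0(t), \tilde P(t,s)\} ,$$
where $\{\cdot,\cdot\}$ denotes the anticommutator. The crucial dissymmetric role played by $A_1$ (commutator) and $A_0$ (anticommutator) is the very origin of the exponential weight in~\eqref{e:symbol-egorov}.

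Next I would define $Q(t,s) \in \Cinf((0,T)^2, \Psi^m_{\phg}(M))$ as a quantization of the symbol $q(t,s,\cdot)$ from~\eqref{e:symbol-egorov}. Smoothness in $(t,s)$ follows from smooth dependence on parameters of $p_m$, $a_0$ and of the flow $\chi_{s,t}$. By symbolic calculus, $[A_1(t), Q(t,s)]$ is of order $m$ with principal symbol $\tfrac{1}{i}\{a_1(t), q(t,s,\cdot)\}$, and $\{A_0(t), Q(t,s)\}$ has principal symbol $2 a_0(t)\, q(t,s,\cdot)$ (here the reality of $a_0$ ensures that the subprincipal contributions do \emph{not} appear at this order). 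A direct computation along the characteristics of $-a_1$ shows that $q$ satisfies the transport equation
$$\partial_t q(t,s,\cdot) = \{a_1(t), q(t,s,\cdot)\} + 2a_0(t)\, q(t,s,\cdot) , \qquad q(s,s,\cdot) = p_m(s,\cdot),$$
and thus
$$E(t,s) := \partial_t Q(t,s) - i[A_1(t),Q(t,s)] - \{A_0(t),Q(t,s)\} \in \Cinf((0,T)^2, \Psi^{m-1}_{\phg}(M)),$$
together with $Q(s,s) - P_m(s) \in \Psi_{\phg}^{m-1}(M)$.

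To conclude, the remainder $R(t,s) = \tilde P(t,s) - Q(t,s)$ solves
$$\partial_t R(t,s) = iH(t)^*R(t,s) - iR(t,s)H(t) - E(t,s) , \qquad R(s,s) \in \Psi^{m-1}_{\phg}(M).$$
The computation of Step~1, applied with $P_m$ replaced by an arbitrary $B_0$, shows that $(t,B_0) \mapsto S(s,t)^* B_0 S(s,t)$ is precisely the propagator of the homogeneous equation, so Duhamel's formula yields
$$R(t,s) = S(s,t)^*\bigl(P_m(s) - Q(s,s)\bigr) S(s,t) - \int_s^t S(\tau,t)^* E(\tau,s) S(\tau,t) \, d\tau .$$
Since $S(\tau,t)$ and $S(\tau,t)^*$ are uniformly bounded on every $H^\sigma(M)$ while both $P_m(s)-Q(s,s)$ and $E(\tau,s)$ map $H^\sigma$ to $H^{\sigma+1-m}$, the claimed regularity $R(t,s) \in \Bo((0,T)^2, \L(H^\sigma, H^{\sigma+1-m}))$ follows. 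The regularity of $\partial_t R$ and $\partial_s R$ is obtained by differentiating this integral representation, the one-order loss being exactly that of $H(t), H(t)^*$ applied to $R$. The main technical obstacle lies in the careful bookkeeping of smooth $(t,s)$-dependence throughout the pseudodifferential calculus and the Duhamel estimates: everything else is a non-autonomous, non-selfadjoint adaptation of the classical Egorov strategy, with the twist that the subprincipal anti-selfadjoint part $iA_0$ produces an exponential amplitude factor rather than a phase transport.
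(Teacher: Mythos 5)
Your proposal is correct and follows essentially the same strategy as the paper's proof: deriving the Heisenberg equation for $\tilde P(t,s)=S(s,t)^*P_m(s)S(s,t)$, quantizing the symbol that solves the transport equation $\partial_t q = \{a_1,q\}+2a_0 q$ with the exponential amplitude, and propagating the order-$(m-1)$ discrepancy by Duhamel. The only cosmetic difference is that the paper phrases the Duhamel step by first conjugating $Q(t,s)-P(t,s)$ by $S(t,s)$ so that its $t$-derivative is directly integrable, but after unwinding $S(s,t)^*S(\tau,s)^*=S(\tau,t)^*$ this is the same integral formula you write.
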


The proof is inspired from~\cite[Chapter~7.8]{Taylor:vol2} and~\cite[Th\'eor\`eme~3.43]{Royer:these}.

\begin{remark}
In this result, the error term $R(t,s)$ is $1$-smoothing. Of course, a classical inductive construction (see~\cite[Section~18.1]{Hoermander:V3}) allows to replace this by an infinitely smoothing operator. This is not needed in the present paper since we only carry an analysis at first order.
\end{remark}

\begin{remark}
\label{rkegorovsimple}
In the simplest case $H=\Lambda$, we have
\begin{itemize}
\item $a_1=\lambda=|\xi|_{x}$;
\item $a_0=0$ because $\Lambda$ is selfadjoint;
\item $S(t,s)=e^{i(t-s)\Lambda}$ and hence $S(s,t)=e^{i(s-t)\Lambda}$ and $S(s,t)^*=(e^{i(s-t)\Lambda})^*=e^{i(t-s)\Lambda}$;
\item $\rho(s,t)=\varphi^-_{(s-t)}(\rho_0)=\varphi^+_{(t-s)}(\rho_0)$.
\end{itemize}
The conclusion of the Theorem (written with $s=0$ and $P_m$ independent on $s$) is therefore the classical result that $e^{it\Lambda} P_m e^{-it\Lambda}$ is (modulo a $1$-smoothing operator) a pseudodifferential operator of order $m$ with principal symbol $q(t,\rho)=p_m(\varphi^+_{t}(\rho))$.
\end{remark}
\bnp
First notice that $S(t,s)$ (solution operator at time $t$, issued from $s$) satisfies
$$
\d_t S(t,s) - iH(t)S(t,s) = 0 , \quad S(s, s) = \id .
$$
As a consequence, since $S(t,s)S(s,t) = \id$, we also have, with $H(t)^* = A_1(t) - i A_0(t)$,
\bna
\d_t S(s,t) + iS(s,t)H(t) = 0 ,  \\ 
\d_t S(t,s)^* + iS(t,s)^* H(t)^*= 0 ,\\ 
\d_t S(s,t)^* - iH(t)^* S(s,t)^* = 0 .
\ena
Corollary~\ref{cor:S(t,s)} yields the following regularity properties
\bna
\label{e:regS}
S(t , s) \in  \Bo (\mathcal{I}\times \mathcal{I} ;\L(H^\sigma (M) )) , \quad 
\d_t S(t , s) , \d_s S(t,s) \in  \Bo (\mathcal{I}\times \mathcal{I} ;\L(H^\sigma (M); H^{\sigma - 1}(M)))
\ena
as well as for $S(t,s)^*$, for all $\sigma\in \R$. 

Now, setting
 $$
P(t,s) := S(s,t)^*  P_m(s) S(s,t)  ,
 $$
and using the above equations, we have $P(s,s)=P_m(s)$ with 
\bnan
\label{eqnP}
\d_tP(t ,s) = i H(t)^* P(t,s) - i P(t,s)H(t)  = i [A_1(t) ,P(t,s)] + A_0(t) P(t,s) + P(t,s) A_0(t) .
\enan
We now construct an approximate pseudodifferential solution $Q(t,s)$ for~\eqref{eqnP}: its principal symbol $q(t,s, x, \xi)$ should satisfy
\bnan
\label{e:transp-symbol}
\d_t q(t,s, \cdot) =  \{ a_1(t, \cdot) , q(t,s, \cdot)  \} + 2 a_0 (t,\cdot) q(t,s, \cdot)  
 , \quad \text{and} \quad  q(s,s, \rho) = p_m(s, \rho) ,
\enan
where $\{ \cdot , \cdot\}$ stands for the Poisson bracket in the $(x, \xi)$ variables.

We first check that the function $q(t,s, x, \xi)$ defined in~\eqref{e:symbol-egorov} satisfies~\eqref{e:transp-symbol}. From~\eqref{e:symbol-egorov}, and using $\chi_{\tau,t} \circ \chi_{t,s}(\rho) =\chi_{\tau,s} (\rho)$, we have:
\bna
 q(t,s , \chi_{t,s}(\rho) )= p_m(s,  \rho ) e^{2\int_s^t a_0 (\tau, \chi_{\tau,s}(\rho) ) d\tau } .
  \ena
This yields $q(s,s, \rho) = p_m(s, \rho)$ and  
\bna
 \d_t \left( q(t,s , \chi_{t,s}(\rho) ) e^{-2\int_s^t a_0 (\tau, \chi_{\tau,s}(\rho) ) d\tau } \right)= 0 ,
  \ena
which, according to the definition of the flow $\chi_{t,s}$,  is 
\bna
\Big(  (\d_t q)(t,s ,\cdot ) +  \{ - a_1(t, \cdot) , q(t,s, \cdot)  \} - 2 a_0 (t,\cdot ) q(t,s, \cdot )  \Big) \big( \chi_{t,s}(\rho) \big)  e^{-2\int_s^t a_0 (\tau, \chi_{\tau,s}(\rho) ) d\tau } = 0 ,
  \ena
for all $(t,s) \in (0,T)^2$ and $\rho \in S^*M$, which proves \eqref{e:transp-symbol}.

Note that we use the homogeneity of $a_1$ of order $1$ allows to keep the homogeneity of $q(t,\rho)$. This allows to select one $Q(t, s)$, so that
\bnan
\label{e:defQ2}
Q(t, s) \in \Cinf \big((0,T)^2, \Psi^m_{\phg}(M)\big) \text{ satisfies  } \sigma_m(Q(t,s)) = q(t, s, \cdot ) .
\enan

From~\eqref{e:transp-symbol} and pseudodifferential calculus, we now have
\bnan
\label{eqnQ}
\d_t Q(t,s) & = & i [A_1(t) ,Q(t,s)] + A_0(t) Q(t,s) + Q(t,s) A_0(t) + R(t,s) \nonumber \\
&= & i H(t)^* Q(t,s) - i Q(t,s)H(t) + R(t,s),
\enan
with $R \in  \Cinf((0,T)^2; \Psi^{m-1}_{\phg}(M))$.
We now estimate the remainder $Q(t,s) - P(t,s)$. We set
 $$T(t,s):=S(t,s)^* \big(Q(t,s) - P(t,s)\big)S(t,s) = S(t,s)^* Q(t,s)S(t,s) - P_m(s),$$
so that we have
\bna
\d_t T(t,s) & =&  \d_t \big( S(t,s)^* Q(t,s)S(t,s) \big) \\
& = & S(t,s)^* \big( -i H(t)^* Q(t,s) + \d_t Q(t,s) + i Q(t,s) H(t) \big) S(t,s) \\ 
& = & S(t,s)^*  R(t,s) S(t,s) ,
\ena
after having used~\eqref{eqnQ}. This yields
$$
Q(t,s) - P(t,s) = S(s,t)^* \left(Q(s,s)-P_m(s) + \int_s^tS(t',s)^*  R(t',s) S(t',s) dt' \right) S(s,t) ,
$$
where $R \in  \Cinf((0,T)^2; \Psi^{m-1}_{\phg}(M))$ and $Q(s,s)-P_m(s) \in \Cinf( (0,T) ;  \Psi^{m-1}_{\phg}(M))$.
This now implies
\bna
Q(t,s) - P(t,s)  \in \Bo \big((0,T)^2, \L(H^\sigma(M), H^{\sigma +1-m}(M))\big) , \\
\d_t \big(Q(t,s) - P(t,s) \big) , \d_s \big(Q(t,s) - P(t,s) \big) \in \Bo  \big((0,T)^2, \L(H^\sigma(M), H^{\sigma -m}(M))\big),
\ena
for any $\sigma \in \R$. This, together with the expression of $Q$ in \eqref{e:defQ2} concludes the proof of the theorem.

\enp

\subsection{Smoothing properties of some operators}
\label{s:smoothing-prop}
The following lemma is taken from~\cite[Lemma~A.1]{DLRL:13} and inspired by~\cite{DL:09}. 
\begin{lemma}
\label{lemma: regularity Hs}
Let $\gamma ,\delta \in \R$ such that $\gamma \neq \delta$, and $B_0 \in \Psi_{\phg}^0(M)$. Then, the operator defined by
\begin{align*}
 B (T) = \int_0^T e^{-i t \gamma \Lambda} B_0 e^{i t \delta \Lambda} d t ,
\end{align*}
satisfies $B \in \Bo_{\loc}(\R ;\L(H^{\sigma}(M), H^{\sigma+1}(M)))$ for all $\sigma \in \R$.
\end{lemma}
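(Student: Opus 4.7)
The plan is to use an integration by parts in $t$ that exploits the fact that $\gamma\neq \delta$, together with the identity $\Lambda^{-1} B_0 \Lambda = B_0 + \Lambda^{-1}[B_0,\Lambda]$. The gain of one derivative will come from the $\Lambda^{-1}$ factor that appears in the process.

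First, I would compute, for any $u \in H^\sigma(M)$ and $t \in \R$,
\begin{align*}
\partial_t \left( e^{-it\gamma \Lambda}\Lambda^{-1} B_0 e^{it\delta \Lambda} u \right)
&= -i\gamma e^{-it\gamma \Lambda} B_0 e^{it\delta \Lambda} u + i\delta \, e^{-it\gamma \Lambda}\Lambda^{-1} B_0 \Lambda e^{it\delta \Lambda} u\\
&= i(\delta-\gamma) e^{-it\gamma \Lambda} B_0 e^{it\delta \Lambda} u + i\delta \, e^{-it\gamma \Lambda} R_0 e^{it\delta \Lambda} u,
\end{align*}
where $R_0 := \Lambda^{-1}[B_0,\Lambda]$. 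Since $B_0 \in \Psi^0_{\phg}(M)$ and $\Lambda \in \Psi^1_{\phg}(M)$, the commutator $[B_0,\Lambda]$ belongs to $\Psi^0_{\phg}(M)$ (the principal symbols cancel), so $R_0 \in \Psi^{-1}_{\phg}(M)$.

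Rearranging the identity and integrating over $(0,T)$ would then yield
\begin{equation*}
B(T) = \frac{1}{i(\delta-\gamma)} \left[ e^{-iT\gamma \Lambda}\Lambda^{-1} B_0 e^{iT\delta \Lambda} - \Lambda^{-1} B_0 \right] - \frac{\delta}{\delta-\gamma} \int_0^T e^{-it\gamma \Lambda} R_0 e^{it\delta \Lambda} \, dt.
\end{equation*}
The boundary term is clearly in $\L(H^\sigma(M), H^{\sigma+1}(M))$ uniformly for $T$ in compact sets, because $\Lambda^{-1} B_0 \in \Psi^{-1}_{\phg}(M)$ is $1$-smoothing and the unitary-like groups $e^{\pm it\alpha \Lambda}$ are uniformly bounded on every $H^\sigma(M)$. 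For the remaining integral, the integrand $e^{-it\gamma \Lambda} R_0 e^{it\delta \Lambda}$ is uniformly bounded from $H^\sigma(M)$ to $H^{\sigma+1}(M)$ on any compact interval of $t$ (for the same reason), so integrating over $(0,T)$ again produces an operator in $\L(H^\sigma(M),H^{\sigma+1}(M))$ with norm controlled by $|T|$.

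Finally, the $\Bo_{\loc}$-continuity in $T$ is immediate: for fixed $u \in H^\sigma(M)$, the map $t \mapsto e^{-it\gamma\Lambda} B_0 e^{it\delta\Lambda} u$ is continuous with values in $H^{\sigma+1}(M)$ up to the boundary terms already handled, and the integral depends continuously on $T$. I expect no real obstacle: the only subtle point is recognizing that $[B_0,\Lambda] \in \Psi^0_{\phg}(M)$ (rather than the naive $\Psi^1_{\phg}(M)$), so that a single integration by parts suffices to gain the full derivative. If more smoothing were required, one would simply iterate the same argument with $R_0$ in place of $B_0$.
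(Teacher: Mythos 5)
Your proof is correct, and the integration-by-parts argument in $t$ (exploiting $\gamma\neq\delta$ to produce the factor $\Lambda^{-1}$, and noting $[B_0,\Lambda]\in\Psi^0_{\phg}(M)$ so that $R_0=\Lambda^{-1}[B_0,\Lambda]\in\Psi^{-1}_{\phg}(M)$) is precisely the standard argument used in the reference~\cite[Lemma~A.1]{DLRL:13} that the paper cites for this lemma.
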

This lemma suffices for the study of the Klein Gordon equation in Section~\ref{s:model-case}.
In the general case of Section~\ref{s:generalcase-T} however, we need the following non-autonomous version of this result.
\begin{lemma}
\label{lemma: regularity Hs non self}
Let $\mathcal{I} \subset \R$ be an interval, and let $H_+ , H_- \in \Cinf(\mathcal{I};\Psi^1_{\phg} (M))$ such that $\lambda = \sigma_1(H_+) = - \sigma_1(H_-)  \in \R$ is time independant and elliptic. 
Then for any $B_0 \in \Psi_{\phg}^m(M)$, $m \in \R$, the operator defined by
\begin{align*}
B(T) = \int_0^T  S_+(t,0)^* B_0 S_-(t,0) d t ,
\end{align*}
where $S_\pm (t,0)$ is the solution operator for the evolution equation $\d_t - i H_\pm(t)$,
satisfies for all $\sigma  \in \R$, $B \in \Bo_{\loc}(\mathcal{I} ;\L(H^{\sigma}(M), H^{\sigma+1-m }(M)))$.
\end{lemma}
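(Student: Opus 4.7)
The plan is to mimic the scalar Lemma~\ref{lemma: regularity Hs} by constructing a pseudodifferential ``anti-derivative'' $A(t)$ that turns the integrand into a total derivative, up to a remainder of order $m-1$. The ellipticity of $\lambda$, together with the sign flip between $\sigma_1(H_+)=\lambda$ and $\sigma_1(H_-)=-\lambda$, is what will make this construction possible and will play the role of the condition $\gamma\neq \delta$ in the scalar case.

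Concretely, I would look for $A \in \Cinf(\mathcal{I};\Psi^{m-1}_{\phg}(M))$ such that
\begin{equation*}
-i H_+(t)^* A(t) + i A(t) H_-(t) + \d_t A(t) = B_0 + R(t),
\qquad R \in \Cinf(\mathcal{I};\Psi^{m-1}_{\phg}(M)).
\end{equation*}
At the principal symbol level this amounts to solving $-i\lambda a + i a(-\lambda) = b_0$, i.e.\ $a = \tfrac{i b_0}{2\lambda}$, which is well-defined in $S^{m-1}_{\phg}(T^*M)$ by ellipticity of $\lambda$ (with the usual harmless excision near $\xi=0$). Since $b_0$ and $\lambda$ are time-independent, $A(t)$ can be chosen independent of $t$ (so $\d_t A=0$), but in any event $\d_t A$ would be of order $m-1$ and could be absorbed into $R$.

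Using the equations $\d_t S_+(t,0)^* = -i S_+(t,0)^* H_+(t)^*$ and $\d_t S_-(t,0) = i H_-(t) S_-(t,0)$ (consequences of Corollary~\ref{cor:S(t,s)} applied to both $H_+$ and its adjoint and to $-H_-$), a direct computation gives
\begin{equation*}
\d_t\bigl(S_+(t,0)^* A(t) S_-(t,0)\bigr)
= S_+(t,0)^*\bigl[ -iH_+(t)^* A(t) + iA(t)H_-(t) + \d_t A(t)\bigr] S_-(t,0)
= S_+(t,0)^* B_0 S_-(t,0) + S_+(t,0)^* R(t) S_-(t,0).
\end{equation*}
Integrating from $0$ to $T$ yields the identity
\begin{equation*}
B(T) = S_+(T,0)^* A(T) S_-(T,0) - A(0) - \int_0^T S_+(t,0)^* R(t) S_-(t,0)\,dt.
\end{equation*}

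It only remains to check that each piece lies in $\Bo_{\loc}(\mathcal{I};\L(H^\sigma(M), H^{\sigma+1-m}(M)))$. The boundary terms $S_+(T,0)^* A(T) S_-(T,0)$ and $A(0)$ gain exactly one order because $A(t) \in \Psi^{m-1}_{\phg}(M)$ and the solution operators $S_\pm(\cdot,0)$ and their adjoints belong to $\Bo_{\loc}(\mathcal{I};\L(H^\sigma(M)))$ for every $\sigma \in \R$ (Corollary~\ref{cor:S(t,s)} again); the continuity in $T$ follows from the continuity statements in that same corollary. For the integral remainder, the pointwise operator $S_+(t,0)^* R(t) S_-(t,0)$ is a composition of bounded operators on $H^\sigma$ with an operator of order $m-1$, hence maps $H^\sigma(M)$ to $H^{\sigma-(m-1)}(M)=H^{\sigma+1-m}(M)$ uniformly for $t$ in compact subsets; continuity in $T$ of the integral is then standard. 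The main technical point -- and the only step requiring care -- is the symbolic construction of $A$ on a (possibly boundaryless) manifold, which is a routine application of local quantization plus ellipticity of $\lambda$; no iteration is needed because the single-step gain of one derivative is exactly what the statement requires.
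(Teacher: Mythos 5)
Your argument is correct, and it is a genuinely different proof from the one in the paper, though both ultimately exploit the same mechanism: the ellipticity of $\sigma_1(H_+^*)-\sigma_1(H_-)=2\lambda$, which is the vector-valued analogue of the condition $\gamma\neq\delta$ in the scalar Lemma~\ref{lemma: regularity Hs}. You construct a time-independent corrector $A\in\Psi^{m-1}_{\phg}(M)$ solving the transport equation $-iH_+^*A+iAH_-+\d_tA=B_0+R$ at the principal-symbol level (the symbol computation $-2i\lambda a=b_0$, hence $a=ib_0/(2\lambda)$, is correct, and your observation that $\d_t A$ can in any case be dumped into $R$ handles the time-dependent lower-order parts of $H_\pm$). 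This turns the integrand into $\d_t\big(S_+(t,0)^*A S_-(t,0)\big)$ up to an order-$(m-1)$ error, and the boundary terms plus the integrated remainder give the claimed mapping property directly. The paper instead avoids introducing a new $\Psi$DO: it proves the equivalent statement $\Lambda B(T)\in\Bo_{\loc}(\L(H^\sigma,H^{\sigma-m}))$ by pushing $\Lambda$ across each piece, integrating by parts, and repeatedly commuting $\Lambda$ through $S_\pm$ using Lemma~\ref{l:comm-evol-lambda}; the punchline is the algebraic identity $i\Lambda B(T)=-i\Lambda B(T)+R$. Your approach is somewhat cleaner and more conceptual (a single parametrix step; no auxiliary commutator lemma for $[\Lambda,S_\pm(t,0)]$ is needed), and it makes the role of ellipticity transparent; the paper's approach avoids constructing any new operator and works only with the pieces already in play. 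Both give the sharp one-derivative gain in a single step, consistent with the statement.
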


We refer to Corollary~\ref{cor:S(t,s)} for the properties of $S_\pm(t,0)$. We shall need the following lemma.
\begin{lemma}
\label{l:comm-evol-lambda}
Let $\mathcal{I} \subset \R$ be an interval, and let $H(t) \in \Cinf(\mathcal{I};\Psi^1_{\phg} (M))$ with real principal symbol and denote by $S  (t,0)$ the solution operator for the evolution equation $\d_t - i H(t)$. Then, for any $A \in  \Psi^m_{\phg} (M)$, we have
\bnan
\label{e:commutator-group}
[A  ,  S(t,0)] = \int_0^t S(t,s) [A ,i H(s)] S(s,0) ds .
\enan
In particular if $A = \Lambda$ and $H(t) = \Lambda + iR(t)$, with $R \in  \Cinf(\mathcal{I};\Psi^0_{\phg} (M))$, we have $$[\Lambda ,  S(t,0)] , [\Lambda ,  S(t,0)^*] \in \mathcal{B}_{\loc}(\mathcal{I} ;\L(H^{s}(M)))$$ for all $s \in \R$.
\end{lemma}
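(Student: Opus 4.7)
The plan is to prove the commutator identity by a Duhamel-type differentiation trick on smooth data, and then deduce the boundedness claims by pseudodifferential calculus together with the uniform operator bounds on $S(t,s)$ furnished by Corollary~\ref{cor:S(t,s)}.

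For the identity, fix $u_0 \in \Cinf(M)$ and consider, for $s$ between $0$ and $t$,
\[
g(s) := S(t,s) A S(s,0) u_0.
\]
From Corollary~\ref{cor:S(t,s)}, the map $s \mapsto S(s,0) u_0$ is in $\Con^0(\mathcal{I};H^\sigma) \cap \Con^1(\mathcal{I};H^{\sigma-1})$ with $\d_s S(s,0) = i H(s) S(s,0)$, and $s \mapsto S(t,s)$ is of class $\mathcal{B}$ with $\d_s S(t,s) = -i S(t,s) H(s)$ when read as a map $H^\sigma \to H^{\sigma-1}$. Applying the product rule gives
\[
g'(s) = -iS(t,s) H(s) A S(s,0) u_0 + iS(t,s) A H(s) S(s,0) u_0 = S(t,s)\bigl[A, i H(s)\bigr] S(s,0) u_0,
\]
which is continuous in $s$ with values in an appropriate Sobolev space. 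Since $g(0) = S(t,0) A u_0$ and $g(t) = A S(t,0) u_0$, integration yields
\[
[A, S(t,0)] u_0 = \int_0^t S(t,s)[A, i H(s)] S(s,0) u_0 \, ds,
\]
which is \eqref{e:commutator-group} on smooth data. Since $A \in \Psi^m_{\phg}(M)$ and $H(s)$ has order one, $[A, i H(s)] \in \Psi^m_{\phg}(M)$ (the order drops by one thanks to the standard commutator rule), so both sides define bounded operators $H^\sigma \to H^{\sigma - m}$, uniformly for $t$ in compacts of $\mathcal{I}$; density of $\Cinf(M)$ then extends the identity.

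For the specific situation $A = \Lambda$ and $H(s) = \Lambda + iR(s)$ with $R \in \Cinf(\mathcal{I};\Psi^0_{\phg})$, one has
\[
[A, i H(s)] = [\Lambda, i\Lambda - R(s)] = -[\Lambda, R(s)] \in \Psi^0_{\phg}(M),
\]
with smooth dependence in $s$. Thus the family $[\Lambda, R(s)]$ is uniformly bounded in $\L(H^\sigma(M))$ on compact subintervals of $\mathcal{I}$. Combined with the uniform $H^\sigma$-boundedness of $S(t,s)$ and $S(s,0)$ from Corollary~\ref{cor:S(t,s)}, the integrand $S(t,s)[\Lambda, iH(s)] S(s,0)$ is uniformly bounded in $\L(H^\sigma(M))$ for $(t,s)$ in compacts, which gives $[\Lambda, S(t,0)] \in \mathcal{B}_{\loc}(\mathcal{I};\L(H^\sigma(M)))$. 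For the adjoint commutator, the $L^2$-selfadjointness of $\Lambda$ yields $[\Lambda, S(t,0)^*] = -[\Lambda, S(t,0)]^*$; by duality, boundedness of $[\Lambda, S(t,0)]$ on every $H^{-\sigma}$ passes to boundedness of its adjoint on every $H^\sigma$, uniformly on compacts.

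The only real subtlety is the justification of the product rule at the level of $g'(s)$, since $\d_s S(t,s)$ and $\d_s S(s,0)$ lose one derivative; working first with $u_0 \in \Cinf(M)$ (where all compositions are unambiguous and all derivatives sit in Sobolev spaces of any order) and closing the argument by density circumvents this. Everything else is routine bookkeeping with pseudodifferential orders and the bounds already encoded in Corollary~\ref{cor:S(t,s)}.
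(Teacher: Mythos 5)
Your derivation of the commutator identity is correct and follows essentially the same route as the paper's: both are Duhamel-type arguments (you integrate $\frac{d}{ds}\bigl(S(t,s)AS(s,0)u_0\bigr)$, while the paper writes the inhomogeneous ODE satisfied by $[A,S(t,0)]u_0$ and invokes Duhamel directly — the integrated form of the same computation). Your extra care in checking $[A,iH(s)]\in\Psi^m_{\phg}$ via the scalar-symbol cancellation, and your explicit treatment of the adjoint via $[\Lambda,S(t,0)^*]=-[\Lambda,S(t,0)]^*$ together with $H^{\pm\sigma}$-duality, correctly fill in details the paper leaves implicit.
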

\bnp[Proof of Lemma~\ref{l:comm-evol-lambda}]
The function $u(t) = [A  ,  S(t,0)] u_0 = A S(t, 0)u_0 - S(t,0)A u_0$ satisfies $u(0) =0$ and solves
\bna
\d_t u(t) = A i H(t) S(t, 0)u_0 - i H(t) S(t,0)A u_0 = [A, iH(t)]S(t,0) u_0  + i H(t) u(t) , 
\ena
so that the Duhamel formula directly yields~\eqref{e:commutator-group}.
\enp

\bnp[Proof of Lemma~\ref{lemma: regularity Hs non self}]
We first notice that $B(T) \in \Bo_{\loc}(\mathcal{I} ;\L(H^{s}(M), H^{s-m}(M)))$ since $S_\pm(t,0)$ preserve regularity. We recall also that
\bnan
\label{e:Spmeq}
\d_t S_\pm(t,0) - iH_\pm(t)S_\pm(t,0) = 0 , \quad \d_t S_\pm(t,0)^* + iS_\pm(t,0)^* H_\pm(t)^*= 0 ,
\enan 
To prove the result, it suffices to prove that $\Lambda B(T) \in \Bo_{\loc}(\mathcal{I} ;\L(H^{s}(M), H^{s -m }(M)))$. We thus compute
\bna
i \Lambda B(T)  = \int_0^T i S_+(t,0)^* \Lambda B_0 S_-(t,0) d t + \int_0^T i  [S_+(t,0)^* , \Lambda ] B_0 S_-(t,0) d t . 
\ena
The second term belongs to $\Bo_{\loc}(\mathcal{I} ;\L(H^{s}(M), H^{s -m }(M)))$ according to Lemma~\ref{l:comm-evol-lambda}. The first term may be rewritten as
\bna
&&\int_0^T i S_+(t,0)^* \Lambda B_0 S_-(t,0) d t \\
&& \qquad \qquad \qquad \qquad= \int_0^T i  S_+(t,0)^*H_+(t)^* B_0 S_-(t,0) d t + \int_0^T i  (\Lambda - H_+(t)^*) S_+(t,0)^* B_0 S_-(t,0) d t .
\ena
The second term belongs to $\Bo_{\loc}(\mathcal{I} ;\L(H^{s}(M), H^{s -m }(M)))$ since $\Lambda - H_+(t)^* \in\Cinf(\mathcal{I};\Psi^0_{\phg} (M))$, and it remains only to examine the first one. Using~\eqref{e:Spmeq}, we now have, for some $R \in \Bo_{\loc}(\mathcal{I} ;\L(H^{s}(M), H^{s -m }(M)))$, 
\bna
i \Lambda B(T) 
& = &\int_0^T - \d_t S_+(t,0)^*  B_0 S_-(t,0) d t +R \\
& =  &\int_0^T  S_+(t,0)^*  B_0  \d_tS_-(t,0) d t - \left[S_+(t,0)^*  B_0 S_-(t,0)  \right]_0^T + R ,
\ena
after an integration by parts (note that this is done in the weak sense, i.e. when applied to a function). Using again~\eqref{e:Spmeq}, we obtain, for other remainders $R \in \Bo_{\loc}(\mathcal{I} ;\L(H^{s}(M), H^{s -m }(M)))$,
\bna
i \Lambda B(T) 
 & =  & \int_0^T  S_+(t,0)^*  B_0  i H_-(t)S_-(t,0) d t + R , \\
 & =  & \int_0^T  S_+(t,0)^*  B_0  i(- \Lambda) S_-(t,0) d t + R , 
\ena
where we used that $- \Lambda - H_-(t) \in \Cinf(\mathcal{I};\Psi^0_{\phg} (M))$. Using $[B_0  , \Lambda] \in \Psi^m_{\phg} (M)$, we now have
\bna
i \Lambda B(T)  & =  & \int_0^T  S_+(t,0)^*  (- i \Lambda) B_0  S_-(t,0) d t + R , 
\ena
that is, using again Lemma~\ref{l:comm-evol-lambda},
\bna
i \Lambda B(T) =    - i \Lambda B(T)  + R , 
\ena
with $R \in  \Bo_{\loc}(\mathcal{I} ;\L(H^{s}(M), H^{s -m }(M)))$. This concludes the proof of the lemma.
\enp

\subsection{Uniform estimates on compact manifolds}
\label{s:unif-pseudo}
We give here a version of the sharp G{\aa}rding inequality (and also boundedness estimates for pseudodifferential operators) on a compact manifold, with a uniform dependence of the constant \wrt the operator involved. Its counterpart on $\R^n$ (of which the result presented here is a consequence) is given in \cite[Theorem~2.5.4]{Lerner:10} for instance.

We use the notation $M_\eps = \{(x,y)\in M\times M , \dist(x,y)>\eps\}$.
\begin{theorem}
\label{th: sharp garding manifold}
Let $(U_j ,\kappa_j)_{j = 1...N}$ be a fixed atlas of $M$ and $(\psi_j)_{j = 1...N}$ a subordinated partition of unity. Let $\tilde{\psi}_j \in \Cinfc(U_j)$ be such that $\tilde{\psi}_j = 1$ on $\supp(\psi_j)$.
Then,  for all $s \in \R$, there exists $\gamma$ a seminorm on $S^0_{\phg}(T^* \R^n)$, there exist $\eps>0$, $\ell >0$ and $C>0$ such that, for all $A \in \Psi^0_{\phg}(M)$ and all $u \in H^s(M)$, we have
\begin{align}
\label{e:bounded-unif}
\|A u \|_{H^s(M)} \leq C \left( \sup_{j \in \{1...N\}}\gamma(a^j) + \|K_A\|_{W^{\ell, \infty}(M_\eps)}
\right) \|u\|_{H^s(M)} , 
\end{align}
and, if  moreover $\sigma_0(A)\geq 0$ on $T^*M$, 
\begin{align}
\label{e:garding-unif}
\Re (A u , u)_{H^s(M)} \geq - C \left( \sup_{j \in \{1...N\}}\gamma(a^j) + \|K_A\|_{W^{\ell, \infty}(M_\eps)}
\right) \|u\|_{H^{s-1/2}(M)}^2 , 
\end{align}
where $a^{j} \in S^0_{\phg}(T^* \R^n)$ is the (full) symbol of the operator $(\kappa_j^{-1})^* \psi_j A \tilde{\psi}_j \kappa_j^* \in \Psi^0_{\phg}(\R^n)$.
\end{theorem}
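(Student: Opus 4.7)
The plan is to reduce the manifold statement to its $\R^n$ counterpart, namely the uniform Calder\'on-Vaillancourt and sharp G{\aa}rding inequalities of~\cite[Theorem~2.5.4]{Lerner:10}, in which the constants are controlled by a fixed seminorm of the symbol. The reduction uses the fixed atlas $(U_j,\kappa_j)$, the partition $(\psi_j)$ and the enlargements $(\tilde\psi_j)$, with all constants attached to these data computed once and for all.

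Since $\sum_j \psi_j = 1$ and $\tilde\psi_j \equiv 1$ on $\supp(\psi_j)$, I first split
\bna
A = \sum_{j=1}^N \psi_j A \tilde\psi_j + R, \qquad R := \sum_{j=1}^N \psi_j A (1-\tilde\psi_j).
\ena
The kernel of $R$, equal to $\sum_j \psi_j(x)(1-\tilde\psi_j(y)) K_A(x,y)$, vanishes on a neighborhood of the diagonal: choosing $\eps>0$ so that $\{d(\cdot,\supp\psi_j) < 2\eps\}\subset\{\tilde\psi_j=1\}$ for every $j$, $K_R$ is supported in $M_\eps$ and equals $K_A|_{M_\eps}$ times smooth atlas-dependent cutoffs. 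A standard integral kernel estimate then yields, for $\ell$ large enough depending on $s$, the bound $\|Ru\|_{H^s(M)}\le C\|K_A\|_{W^{\ell,\infty}(M_\eps)}\|u\|_{H^s(M)}$ and, viewing $R$ as a continuous map $H^{-1/2-s}(M)\to H^{1/2+s}(M)$, also $|(Ru,u)_{H^s}|\le C\|K_A\|_{W^{\ell,\infty}(M_\eps)}\|u\|_{H^{s-1/2}(M)}^2$, with $C$ depending only on the atlas and $s$.

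For the principal pieces $\psi_j A\tilde\psi_j$, whose kernels are compactly supported in $U_j\times U_j$, the pullback through $\kappa_j$ is exactly the properly supported operator $\Op(a^j)\in\Psi_{\phg}^0(\R^n)$ of the statement. Since $\kappa_j$ is fixed, $\kappa_j^*$ induces an $H^s$-equivalence on compactly supported functions, with constants depending only on the atlas. The uniform Calder\'on-Vaillancourt theorem on $\R^n$ then provides a single seminorm $\gamma$ on $S_{\phg}^0(T^*\R^n)$ such that $\|\Op(a^j)v\|_{H^s(\R^n)}\le C\gamma(a^j)\|v\|_{H^s(\R^n)}$; applied with $v=(\tilde\psi_j u)\circ\kappa_j^{-1}$, summed over the finite index set, and combined with the $R$-bound, this yields~\eqref{e:bounded-unif}. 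Similarly, in the case $s=0$, writing $(\psi_j A\tilde\psi_j u,u)_{L^2(M)}$ as an $\R^n$ inner product via $\kappa_j$ (the smooth Jacobian density being absorbed into the cutoffs), the non-negativity of the principal symbol $\psi_j\sigma_0(A)$ of $\Op(a^j)$ allows one to invoke the uniform sharp G{\aa}rding inequality of~\cite[Theorem~2.5.4]{Lerner:10}, giving $\Re(\Op(a^j)v_j,v_j)_{L^2(\R^n)}\ge -C\gamma(a^j)\|v_j\|_{H^{-1/2}(\R^n)}^2$ for each $j$; summing over $j$ and using the bound on $R$ produces~\eqref{e:garding-unif} for $s=0$.

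To pass from $s=0$ to arbitrary $s$, I rewrite $\Re(Au,u)_{H^s} = \Re(\Lambda^s A \Lambda^{-s} \Lambda^s u, \Lambda^s u)_{L^2}$ and apply the $s=0$ case to $\Lambda^s A \Lambda^{-s}$. The main technical obstacle is to verify that the seminorm $\gamma$ in~\eqref{e:garding-unif} remains a \emph{fixed} seminorm of the original symbols $a^j$, and that the off-diagonal remainder is still controlled by $\|K_A\|_{W^{\ell,\infty}(M_\eps)}$ (at the price of enlarging $\ell$). This follows from the composition expansion $\Lambda^s A \Lambda^{-s} = A + A_{-1}$, with $A_{-1}\in\Psi_{\phg}^{-1}(M)$: both operators share the same principal symbol (still $\geq 0$), and the local full symbols and off-diagonal kernels of $A_{-1}$ are controlled by larger but still fixed seminorms and higher-regularity kernel norms of $A$. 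The $\Psi^{-1}$ correction then contributes only to the $H^{s-1/2}$ remainder term of~\eqref{e:garding-unif} and is harmless, which completes the reduction.
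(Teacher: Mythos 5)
Your proof is correct and follows essentially the same route as the paper: the decomposition $A = \sum_j \psi_j A \tilde\psi_j + R$ with $R$ having kernel supported away from the diagonal, the localization via $\kappa_j^*$ to invoke the uniform sharp G\aa rding inequality of~\cite[Theorem~2.5.4]{Lerner:10}, and the kernel bound for $R$ are all the same. For the passage from $s=0$ to general $s$, you conjugate $A$ by $\Lambda^s$ and apply the $s=0$ case to $\tilde A=\Lambda^s A\Lambda^{-s}$, whereas the paper keeps $A$, applies the $s=0$ G\aa rding estimate to $A\Lambda^s u$, and bounds the error $(\Lambda^{1/2}[A,\Lambda^s]u,\Lambda^{s-1/2}u)$ by~\eqref{e:bounded-unif}; since $\Lambda^{1/2}(\tilde A - A)\Lambda^{1/2} = -\Lambda^{1/2}[A,\Lambda^s]\Lambda^{-(s-1/2)}$ is exactly (up to sign) the zero-order operator the paper bounds, the two reductions are in substance identical. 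In either presentation one still has to verify that the seminorms of the local symbols and the off-diagonal kernel norm of the auxiliary operator (your $A_{-1}$, the paper's $\Lambda^{1/2}[A,\Lambda^s]\Lambda^{-(s-1/2)}$) are controlled by fixed seminorms of the $a^j$ together with $\|K_A\|_{W^{\ell',\infty}(M_{\eps'})}$ for suitably enlarged $\ell'$ and shrunk $\eps'$; the paper asserts this without detail and you gesture at it, and since $\Lambda^s$ is nonlocal this point genuinely requires a small amount of extra care (the off-diagonal part of $A_{-1}$ is not simply a cutoff of $K_A$). With that observation spelled out, your argument is complete.
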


As a direct consequence, we have the following corollary.

\begin{corollary}
\label{cor:unif-bound}
Let $s \in \R$, $T_1<T_2$ and assume $A_t \in \Con^0([T_1, T_2]; \Psi^0_{\phg}(M))$. Then, there exists a constant $C>0$ such that
\begin{align*}
\|A_t u \|_{H^s(M)} \leq  C \|u\|_{H^{s}(M)} , \quad \text{for all } t \in [T_1, T_2], \text{ and } u \in H^s(M), 
\end{align*}
and, if  moreover $\sigma_0(A)\geq 0$ on $[T_1, T_2] \times T^*M$, 
\begin{align*}
\Re(A_t u , u)_{H^s(M)} \geq - C \|u\|_{H^{s -1/2}(M)}^2 , \quad \text{for all } t \in [T_1, T_2], \text{ and } u \in H^s(M).
\end{align*}
\end{corollary}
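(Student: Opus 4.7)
The plan is to prove both estimates simultaneously via a three-step strategy: first, localize $A$ using the fixed partition of unity $(\psi_j)$, splitting it into a near-diagonal piece concentrated within each chart $U_j$ and a far-from-diagonal piece whose kernel is supported in some $M_\eps$; second, handle the near-diagonal piece by pushing it to $\R^n$ via $\kappa_j$ and invoking the $\R^n$-counterparts of both estimates, namely the Calder\'on--Vaillancourt $L^2$-boundedness theorem and the sharp G{\aa}rding inequality, in their versions with explicit symbol-seminorm constants (see e.g.\ \cite[Theorem~2.5.4]{Lerner:10}); third, reduce the general $s \in \R$ case to $s = 0$ by conjugation with $\Lambda^s$.

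For the decomposition, since $\sum_j \psi_j = 1$, I write $A = \sum_j \psi_j A \tilde\psi_j + \sum_j \psi_j A (1 - \tilde\psi_j)$. Because $\tilde\psi_j \equiv 1$ on $\supp(\psi_j)$, the sets $\supp(\psi_j)$ and $\supp(1-\tilde\psi_j)$ are separated by some fixed $\eps > 0$, so the kernel of the second sum is supported in $M_\eps$. A Schur-type test against the $W^{\ell,\infty}(M_\eps)$-norm of $K_A$ (with $\ell$ depending on $s$ via the derivatives needed to handle $\Lambda^s$) then yields both the boundedness bound and, after pairing with $u$ in the $H^s$ inner product, the corresponding contribution to the G{\aa}rding lower bound for this piece.

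For each near-diagonal piece, the pullback $(\kappa_j^{-1})^* \psi_j A \tilde\psi_j \kappa_j^*$ is a pseudodifferential operator on $\R^n$ with full symbol $a^j \in S^0_{\phg}(T^*\R^n)$; its $L^2$-boundedness is controlled by a seminorm $\gamma(a^j)$ via Calder\'on--Vaillancourt. For the G{\aa}rding part, the principal symbol of $\psi_j A \tilde\psi_j$ equals $\psi_j \sigma_0(A) \ge 0$, so the $\R^n$ sharp G{\aa}rding inequality applies with explicit seminorm-dependence, yielding $\Re(\psi_j A \tilde\psi_j u, u)_{L^2(M)} \ge -C \gamma(a^j) \|u\|_{H^{-1/2}(M)}^2$ after transferring back to $M$ (the changes of chart produce equivalent Sobolev norms on each $U_j$, with constants depending only on the fixed atlas).

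To reach general $s$, I write $(Au, u)_{H^s(M)} = \big((\Lambda^s A \Lambda^{-s}) v, v\big)_{L^2(M)}$ with $v = \Lambda^s u$, and apply the $s = 0$ case to $\Lambda^s A \Lambda^{-s}$. This is precisely where the main technical obstacle arises: one must verify that the local symbols of $\Lambda^s A \Lambda^{-s}$ and the $W^{\ell',\infty}(M_\eps)$-norm of its kernel are dominated by $\sup_j \gamma(a^j) + \|K_A\|_{W^{\ell,\infty}(M_\eps)}$, for some fixed $\ell'$ and some (possibly enlarged) seminorm $\gamma$. For the symbolic part, this follows from the asymptotic expansion of the composition: each term involves only finitely many derivatives of $a^j$ with coefficients coming from the fixed symbol of $\Lambda^s$. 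For the off-diagonal part, one uses that the Schwartz kernel of $\Lambda^{\pm s}$ is rapidly decaying off the diagonal, so the composition preserves off-diagonal smoothness with only a fixed loss of regularity, completing the reduction to $s = 0$.
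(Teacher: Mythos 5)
Your argument is essentially a re-derivation of the paper's Theorem~\ref{th: sharp garding manifold}, and as such the decomposition and the two $\R^n$ ingredients you invoke (Calder\'on--Vaillancourt for boundedness, sharp G{\aa}rding for positivity) are the right ones. The intended proof is shorter: the paper simply cites Theorem~\ref{th: sharp garding manifold} and deduces the corollary from it. But the real issue is that your proof never addresses the one thing the corollary actually asserts, namely uniformity in the parameter $t$. Nowhere in your argument does $t$ or the interval $[T_1,T_2]$ appear; you effectively prove the estimates for a \emph{fixed} operator $A$ with a constant expressed in terms of its local symbol seminorms $\sup_j \gamma(a^j)$ and a far-off-diagonal kernel norm $\|K_A\|_{W^{\ell,\infty}(M_\eps)}$, which is precisely Theorem~\ref{th: sharp garding manifold} again. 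For a fixed $A \in \Psi^0_{\phg}(M)$ these two estimates are classical; the whole point of the corollary is that $C$ can be chosen independent of $t$.

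The missing step is short but indispensable: by the definition of $A_t \in \Con^0([T_1,T_2]; \Psi^0_{\phg}(M))$ (recalled in Appendix~\ref{s:pseudo-elementaire}), each localized symbol $a^j_t$ lies in $\Con^0([T_1,T_2]; S^0_{\phg}(T^*\R^n))$ and $K_{A_t}$ lies in $\Con^0([T_1,T_2]; \Con^\ell(M\times M \setminus \Delta_M))$, so the functions $t\mapsto \gamma(a^j_t)$ and $t\mapsto \|K_{A_t}\|_{W^{\ell,\infty}(M_\eps)}$ are continuous on the compact interval $[T_1,T_2]$, hence bounded. Plugging these uniform bounds into the constant produced by your (or the paper's) argument yields the time-independent $C$. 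Without this continuity-plus-compactness observation, the proof does not establish the statement. Everything else you wrote --- the near/far-diagonal split, the chart-by-chart application of the $\R^n$ theorems, the $\Lambda^s$-conjugation to reduce to $s=0$ while tracking that the resulting seminorms are controlled by those of $A$ --- is sound and mirrors the paper's proof of Theorem~\ref{th: sharp garding manifold}; it is simply incomplete as a proof of the corollary.
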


\begin{proof}[Proof of Theorem~\ref{th: sharp garding manifold}]
We only prove the uniform G{\aa}rding inequality~\eqref{e:garding-unif}. The proof of the uniform boundedness estimate~\eqref{e:bounded-unif} is the same (using e.g.~\cite[proof of Theorem~1.1.4]{Lerner:10}).

Notice first that the result in $H^s(M)$ is a consequence of the result in $L^2(M)$ and~\eqref{e:bounded-unif}: For $u \in \Cinf(M)$, applying~\eqref{e:garding-unif} to $\Lambda^s u$ yields 
\begin{align*}
(A \Lambda^s u , \Lambda^s u)_{L^2(M)} \geq - C_0 \|\Lambda^s u\|_{H^{-1/2}(M)}^2 =- C_0 \| u\|_{H^{s-1/2}(M)}^2 , 
\end{align*}
with $C_0 =   \sup_{j \in \{1...N\}}\gamma(a^j) + \|K_A\|_{W^{\ell, \infty}(M_\eps)}$. Writing now 
\begin{align*}
|(A \Lambda^s u , \Lambda^s u)_{L^2(M)} - (\Lambda^s A  u , \Lambda^s u)_{L^2(M)} |=
| (\Lambda^{1/2} [A , \Lambda^s] u , \Lambda^{s-1/2} u)_{L^2(M)} | \leq C_A \| u\|_{H^{s-1/2}(M)}^2 ,
\end{align*}
where the constant $C_A$ has the same form as $C_0$ according to~\eqref{e:bounded-unif}, yields the result in $H^s(M)$. 

Let us now prove the case $s=0$. 
We have 
\begin{align}
\label{eq: decomposition A}
A = \sum_{j = 1...N} \psi_j A 
= \sum_{j = 1...N} \psi_j A \tilde{\psi}_j  + \psi_j A (1 - \tilde{\psi}_j ).
\end{align}
The kernel of each operator $\psi_j A (1 - \tilde{\psi}_j )$ is given by $K^{j}(x,y):=\psi_j(x) K_A(x,y) (1 - \tilde{\psi}_j (y))$. Since $\psi_j (1 - \tilde{\psi}_j ) = 0$, it is supported in the set $M_{\eps^{j}}$ for some $\eps^{j}>0$. As a consequence, this operator is infinitely smoothing and we have in particular 
\begin{align*}
\| \psi_j A (1 - \tilde{\psi}_j ) \|_{\L(H^{-1/2}(M) ; H^{1/2}(M) )} \leq C_j \|K_A\|_{W^{\ell, \infty}(M_{\eps^j})},
\end{align*}
so that  
\begin{align}
\label{eq: KA smoothing}
\big| \big( \psi_j A (1 - \tilde{\psi}_j )u ,u  \big)_{L^2(M)} \big| 
& \leq 
\|\big( \psi_j A (1 - \tilde{\psi}_j )u\|_{H^{1/2}(M)} \|u\|_{H^{-1/2}(M)} \nonumber \\
& \leq C_j \|K_A\|_{W^{\ell, \infty}(M_{\eps^j})}\|u\|_{H^{-1/2}(M)}^2 .
\end{align}
Next, concerning the terms of the form $\psi_j A \tilde{\psi}_j$ in \eqref{eq: decomposition A}, we write 
\begin{align*}
\big( \psi_j A \tilde{\psi}_j u ,u  \big)_{L^2(M)} = \big( \big((\kappa_j^{-1})^* \psi_j A \tilde{\psi}_j \kappa_j^* \big) (\kappa_j^{-1})^*u , (\kappa_j^{-1})^*u \big)_{L^2(\R^n, \sqrt{\det(g)}dL)} ,
\end{align*}
where the principal symbol of the operator $(\kappa_j^{-1})^* \psi_j A \tilde{\psi}_j \kappa_j^*$ is $(\kappa_j^{-1})^* (\psi_j \sigma_0(a)) \geq 0$ on $T^* \R^n$. Using the sharp G{\aa}rding inequality in $\R^n$ as stated in \cite[Theorem~2.5.4]{Lerner:10}, we obtain, for smooth compactly supported functions $v$, 
\begin{align}
\label{eq: positive Aj}
\big( \big((\kappa_j^{-1})^* \psi_j A \tilde{\psi}_j \kappa_j^* \big) v , v \big)_{L^2(\R^n ,\sqrt{\det(g)}dL)}
\geq - C_j \gamma(a^j)\|v\|_{H^{-1/2}(\R^n)}^2 .   
\end{align}
Note that we have used here that the sharp G{\aa}rding inequality remains unchanged under the addition of an operator in $\Psi^{-1}_{\phg}(\R^n)$.

Finally, combining~\eqref{eq: KA smoothing}, \eqref{eq: positive Aj}, with \eqref{eq: decomposition A}, and recalling that there is a finite number of coordinate patches $U_j$, we obtain the result of Theorem~\ref{th: sharp garding manifold}.
\end{proof}

\section{Geometric facts}
\label{app:geom}
\subsection{Definitions and notations}
Recall that $M$ is a compact manifold without boundary, that for $x \in M$, $T_x M$ denotes the tangent space to $M$ at the point $x$, and $T^*_x M$ its dual space, the cotangent space to $M$ at $x$. We also denote $\pi :TM \to M$ and $\pi :T^*M \to M$ the canonical projections to the manifold, the duality bracket at $x$ being denoted by $\langle \cdot , \cdot \rangle_{x} =\langle \cdot , \cdot \rangle_{T_x^* M , T_x M}  $. 
The manifold $M$ is endowed with a Riemannian metric $g$, that is: for any $x \in M$, $g_x$ is a positive definite quadratic form on $T_x M$, depending smoothly on $x$. The Riemannian metric $g$ furnishes an isomorphism $T_x M \to T_x^* M$, $v \mapsto v^\flat :=g_x(v , \cdot)$, with inverse $v = (v^\flat)^\sharp$. The metric $g$ on $TM$ induces a metric $g^*$ on $T^*M$, canonically defined by $g^*_x(\xi ,\eta) = g_x(\xi^\sharp ,\eta^\sharp)$ for $x \in M$, and $\xi ,\eta \in T^*_x M$. We denote by $SM$ (\resp $S^*M$) the Riemannian sphere (\resp cosphere) bundle over $M$, with fiber over $x\in M$ given by $\{v \in TM ,g_x(v,v) =1\}$ (\resp $\{\xi \in T^*M ,g^*_x(\xi ,\xi ) =1\}$).

\bigskip
We define the Hamiltonian $\lambda(x,\xi) = |\xi|_x = \sqrt{g^*_x(\xi ,\xi)} \in \Cinf(T^*M \setminus {0})$, which is a homogeneous function of degree one. We denote by $H_{\lambda}$ and $\varphi_t=\varphi_t^+$ the associated Hamiltonian vector field and flow, that is 
$$ 
 \frac{d}{d t} \varphi_t(\rho) = H_{\lambda} ( \varphi_t(\rho) ), \qquad  \varphi_0(\rho)  = \rho  \in T^*M,
 $$
with, in local charts, $ H_{\lambda} = \d_\xi \lambda \cdot \d_x -  \d_x \lambda \cdot \d_\xi$. Writing $\varphi_t(\rho)=  (x(t) , \xi(t))$, we have, still in local charts, 
\bnan
\label{e:ham-curve}
\dot{x}(t) = \d_\xi \lambda(x(t), \xi(t)) , \quad \text{ and } \quad \dot{\xi}(t) = - \d_x \lambda(x(t), \xi(t)).
\enan
 This flow is globally defined, for it preserves the function $\lambda$ . In particular $(x(t) , \xi(t)) \in S^*M = \{(x,\xi) \in T^*M , \lambda(x,\xi)=1\}$ for all $t \in \R$ if $(x(0) , \xi(0)) \in S^*M$. The following lemma gives the link between geodesics and projections on $M$ of the curves of $\varphi_t$ (see for instance \cite[Theorem~2.124]{GHL04} in the case of $H_{\lambda^2/2}=\lambda H_{\lambda}$, which gives the same result up to a reparametrization of the curve $x(t)$).
\begin{lemma}
\label{l:geodesic-flow}
Let $I= [a,b] \subset \R$. A curve $(x(t) , \xi(t))_{t \in I}$ on $T^*M \setminus 0$ is a Hamiltonian curve of $\lambda$ (i.e. satisfies~\eqref{e:ham-curve}) 
in $T^*M\setminus 0$  if and only if the curve $(x(t))_{t \in I}$ on $M$ is a geodesic curve of the metric $g$ on $M$ (parametrized by arclength) such that
 $(x(t) , \dot{x}(t)) \in SM$, $t \in I$. 
 Moreover, we have $\dot{x}(t) =\frac{\xi(t)^\sharp}{|\xi(t)|_{x(t)}} \in S_{x(t)}M$, $t\in I$.
 \end{lemma}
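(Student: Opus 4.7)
The plan is to work in local coordinates and identify the Hamiltonian flow of $\lambda$ as a constant-speed reparametrization of the standard cogeodesic flow, that is, the Hamiltonian flow of $L:=\frac{1}{2}\lambda^2 = \frac{1}{2}|\xi|_x^2$, whose projection to $M$ is already known to be a geodesic by the cited result \cite[Theorem~2.124]{GHL04}. The key algebraic observation, which follows from the Leibniz rule $H_{fg} = f H_g + g H_f$ for Hamiltonian vector fields, is
$$H_L = \lambda\, H_\lambda \quad \text{on } T^*M\setminus 0.$$
In a local chart with $\lambda(x,\xi) = (g^{ij}(x)\xi_i\xi_j)^{1/2}$, a direct computation gives $\partial_{\xi_k}\lambda = g^{ik}\xi_i/\lambda$ and $\partial_{x^k}\lambda = \frac{1}{2\lambda}(\partial_{x^k}g^{ij})\xi_i\xi_j$, so Hamilton's equations \eqref{e:ham-curve} read $\dot{x}^k = g^{ik}\xi_i/\lambda$. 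This is exactly the coordinate form of $\dot{x} = \xi^\sharp/|\xi|_x$, which establishes the ``moreover'' claim; taking the $g_x$-norm then immediately yields $|\dot{x}(t)|_{x(t)} = 1$, so $(x(t),\dot{x}(t))\in SM$ and the projected curve travels at unit speed.

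For the forward implication, the function $\lambda$ is conserved along its own Hamiltonian flow, say $\lambda(x(t),\xi(t))\equiv \lambda_0>0$. The identity $H_L = \lambda H_\lambda$ then yields that $s\mapsto \rho(s)$ is an integral curve of $H_L$ starting at some $\rho_0$ with $\lambda(\rho_0)=\lambda_0$ if and only if $t\mapsto \rho(t/\lambda_0)$ is an integral curve of $H_\lambda$. Since the projection of any integral curve of $H_L$ is a geodesic of $g$ parametrized proportionally to arclength (classical cogeodesic flow, recalled in \cite[Theorem~2.124]{GHL04}), the same is true of $x(t)$; combined with the speed computation of the previous paragraph, $x(t)$ is actually parametrized by arclength.

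For the converse, given a geodesic $x(t)$ with $|\dot{x}(t)|_{x(t)}=1$, I would lift it to $(x(t),\xi(t))$ by setting $\xi(t) := \dot{x}(t)^\flat \in S^*_{x(t)}M$. The cited theorem, read in the opposite direction, identifies $(x,\xi)$ as an integral curve of $H_L$, and since $\lambda\equiv 1$ on this curve the identity $H_L = \lambda H_\lambda$ reduces there to $H_L=H_\lambda$, so $(x,\xi)$ is equally an integral curve of $H_\lambda$. I do not anticipate any serious obstacle; the only mildly delicate point is bookkeeping for the reparametrization between the flows of $L$ and $\lambda$, which the condition $(x,\dot{x})\in SM$ conveniently fixes so that the parameter $t$ coincides with arclength.
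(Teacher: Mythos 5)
Your argument is correct and follows exactly the route the paper indicates: the paper provides no written proof of this lemma, only the citation to \cite[Theorem~2.124]{GHL04} together with the remark that $H_{\lambda^2/2} = \lambda H_\lambda$ yields the result up to a reparametrization of $x(t)$, which is precisely the reduction you carry out. The local-chart computation of $\partial_\xi\lambda$, the identification $\dot{x}=\xi^\sharp/|\xi|_x$ (giving the ``moreover'' part and unit speed), and the reparametrization between integral curves of $H_L$ and $H_\lambda$ using conservation of $\lambda$ are all accurate.
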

In the main part of the article, we also use the Hamiltonian flow $\varphi_t^-$ associated to the Hamiltonian $-\lambda$ (which, as well, is global and preserves $S^*M$). Of course, it is linked with that of $\lambda$ according to the following lemma.
\begin{lemma}
\label{l:phi+phi-}
For all $t \in \R$ and $\rho \in T^*M$, we have $\varphi^-_t(\rho) = \varphi_{-t}(\rho)$. Moreover, denoting by $\sigma : T^*M \to T^*M$ the involution $(x, \xi) \mapsto (x, -\xi)$, we have $ \sigma \circ \varphi_{t}(\rho) =  \varphi_{-t}  \circ \sigma(\rho)$.
\end{lemma}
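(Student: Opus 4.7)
The plan is to treat the two identities separately, both by elementary Hamiltonian-flow manipulations in local coordinates (everything is global because $\lambda$ is preserved by the flow).

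For the first identity $\varphi^-_t = \varphi_{-t}$: since $-\lambda$ is smooth on $T^*M\setminus 0$ and Hamiltonian vector fields are linear in the Hamiltonian, we have $H_{-\lambda}=-H_\lambda$. Hence if $\rho(t) = \varphi_{-t}(\rho)$, the chain rule gives $\dot\rho(t) = -H_\lambda(\rho(t)) = H_{-\lambda}(\rho(t))$ with $\rho(0)=\rho$, so by uniqueness of solutions of ODEs $\rho(t)=\varphi_t^-(\rho)$. This step is a one-liner and causes no difficulty.

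For the second identity, the key observation is that $\lambda(x,\xi)=|\xi|_x=\sqrt{g_x^*(\xi,\xi)}$ is an \emph{even} function of $\xi$, so that
\[
\lambda\circ\sigma=\lambda,\qquad (\d_\xi\lambda)(x,-\xi)=-(\d_\xi\lambda)(x,\xi),\qquad (\d_x\lambda)(x,-\xi)=(\d_x\lambda)(x,\xi).
\]
I plan to work in a local chart and let $\varphi_t(\rho_0)=(X(t),\Xi(t))$ solve the Hamiltonian system~\eqref{e:ham-curve}. I will then \emph{define} $(Y(t),H(t)):=\sigma(\varphi_{-t}(\rho_0))=(X(-t),-\Xi(-t))$ and check, using the chain rule together with the parity relations above, that $\dot Y(t)=\d_\xi\lambda(Y(t),H(t))$ and $\dot H(t)=-\d_x\lambda(Y(t),H(t))$, with initial datum $(Y(0),H(0))=\sigma(\rho_0)$. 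Uniqueness in~\eqref{e:ham-curve} then forces $(Y(t),H(t))=\varphi_t(\sigma(\rho_0))$, i.e.\ $\sigma\circ\varphi_{-t}=\varphi_t\circ\sigma$; replacing $t$ by $-t$ yields the stated identity $\sigma\circ\varphi_t=\varphi_{-t}\circ\sigma$. Since both sides are defined globally (the flow preserving $\lambda$, hence conic sets), this local computation is valid everywhere on $T^*M\setminus 0$, and the identity extends trivially to the zero section where both sides fix $\rho=(x,0)$.

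There is no real obstacle here; the only point to be careful about is the book-keeping of signs in the chain-rule computation and the fact that $\lambda$ is only defined off the zero section, which is harmless because $\sigma$ and $\varphi_t$ both preserve the zero section and $T^*M\setminus 0$ separately.
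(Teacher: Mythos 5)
Your proof is correct and complete. The first identity is the standard observation $H_{-\lambda}=-H_\lambda$ closed by uniqueness for the Hamiltonian ODE, and the second is a careful chain-rule computation exploiting that $\lambda(x,\cdot)=|\cdot|_x$ is even in $\xi$ (so $\partial_\xi\lambda$ is odd and $\partial_x\lambda$ is even), again closed by uniqueness. The sign bookkeeping checks out: with $(Y,H)(t):=\sigma(\varphi_{-t}(\rho_0))$ one indeed finds $\dot Y=\partial_\xi\lambda(Y,H)$, $\dot H=-\partial_x\lambda(Y,H)$, $(Y,H)(0)=\sigma(\rho_0)$, hence $\sigma\circ\varphi_{-t}=\varphi_t\circ\sigma$, and replacing $t$ by $-t$ gives the claim.

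There is nothing in the paper to compare against: the authors do not prove this lemma, they merely cite Lemmas B.1 and B.3 of~\cite{DLRL:13}. Your derivation is the expected self-contained argument one would find in that reference. One very small remark on the zero section: $\varphi_t$ is not actually defined at $\xi=0$ since $\lambda$ fails to be differentiable there; the lemma should simply be read on $T^*M\setminus 0$ (as the rest of the paper tacitly does), so the extension you mention is unnecessary rather than trivial.
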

This is classical. The first fact is \eg a consequence of~\cite[Lemma~B.1]{DLRL:13}, and the second of~\cite[Lemma~B.3]{DLRL:13}.

\medskip
In the main part of the paper, we use the Riemannian distance to a subset $E \subset M$, defined by
\bna
\dist(x_1 , E) = \inf_{x_0 \in E} \dist(x_0 , x_1) ,
\ena
with
$$
 \dist(x_0 , \x_1) = \inf_{\gamma \in C^1([0,1] ; M), \gamma(0) = x_0 , \gamma(1) = x_1}\length(\gamma),
$$
where the Riemannian length of a path $\gamma \in C^1([0,1] ; M)$ is given by 
$\length(\gamma) = \int_0^1 \sqrt{ g_{\gamma(t)}(\dot{\gamma}(t), \dot{\gamma}(t)) } \ dt$.

\medskip
Given a smooth function $u$ on $M$, we define the vector field $\nabla u$ by $\nabla  u(x) =  \big( d u(x)\big)^\sharp$. As well, the Laplace-Beltrami operator may be defined by the identity
$$
\int_M (\Delta u)(x) v(x) dx =  - \int_M g_x( \nabla u(x), \nabla v(x)) dx , 
$$
where $dx$ is the Riemannian volume element (given by $\sqrt{\det(g)} dL(x)$ in local charts, where $dL(x)$ is the Lebesgue measure).

\medskip
To conclude this section, we give a proof of Lemma~\ref{l:egal-gpm-pde}, as consequence of Lemma~\ref{l:phi+phi-}.
As another corollary (which is a generalization of the former), we also have Lemma~\ref{l:egal-gpm-pde}, a proof of which we may now write.
\begin{proof}[Proof of Lemma~\ref{l:egal-gpm-pde}]
Recalling that $\varphi_{s}^-(\rho) = \varphi_{-s}^+(\rho) =\varphi_{-s}(\rho)$ according to Lemma~\ref{l:phi+phi-},~\eqref{e:def-gpm-pde} can be rewritten, using $(x (s) , \xi (s)) = \varphi_s  (x_0 ,\xi_0)$ for $s\in \R$ as 
$$
g_T^\pm (x_0 , \xi_0)=  \int_0^T  \obs^2(x(\pm t))  \exp \left(\int_0^t \Re(b_0)(\tau, x(\pm \tau)) \pm \left< \frac{\xi(\pm \tau)}{|\xi (\pm \tau)|_{x(\pm \tau)}} , \Re(b_1)(\tau , x(\pm \tau)) \right>_{x(\pm \tau)}  d\tau \right) dt.
$$
According to Lemma~\ref{l:phi+phi-}, we also have $\sigma \circ \varphi_s = \varphi_{-s} \circ \sigma$ (where $\sigma(x,\xi) = (x, -\xi)$), that is, denoting $(x (s, x_0, \xi_0) , \xi (s,x_0, \xi_0)) = \varphi_s  (x_0 ,\xi_0)$,
$$
x (-s, x_0, -\xi_0) = x (s,x_0, \xi_0)) , \quad \xi (-s, x_0,- \xi_0) = - \xi (s,x_0, \xi_0)) , \quad s \in \R , (x_0, \xi_0) \in T^*M .
$$
Plugging this into the expression of $g_T^-$, we obtain
\bna
g_T^- (x_0 , - \xi_0) 
& = & \int_0^T  \obs^2(x(-t ,x_0 , -\xi_0))  \exp \Bigg(\int_0^t \Re(b_0)(\tau, x(-\tau ,x_0 , -\xi_0)) \\
&&  - \left< \frac{\xi(-\tau ,x_0 , -\xi_0)}{|\xi (-\tau ,x_0 , -\xi_0)|_{x(-\tau ,x_0 , -\xi_0)}} , \Re(b_1)(\tau , x(-\tau ,x_0 , -\xi_0)) \right>_{x(-\tau ,x_0 , -\xi_0)}  d\tau \Bigg) dt \\
& = & \int_0^T  \obs^2(x(t ,x_0 ,\xi_0))  \exp \Bigg(\int_0^t \Re(b_0)(\tau, x(\tau ,x_0 ,\xi_0)) \\
&&  - \left< \frac{-\xi(\tau ,x_0 , \xi_0)}{|\xi (\tau ,x_0 ,\xi_0)|_{x(\tau ,x_0 ,\xi_0)}} , \Re(b_1)(\tau , x(\tau ,x_0 , \xi_0)) \right>_{x(\tau ,x_0 , \xi_0)}  d\tau \Bigg) dt \\
& = & g_T^+ (x_0 , \xi_0).
\ena
This is $g_T^-  \circ \sigma = g_T^+$.
\end{proof}
This Lemma contains in particular the following result,  after having used that $\sigma : S^*M \to S^*M$ is a bijection.
\begin{corollary}
\label{e:moy-=moy+}
For any function $f \in \Con^0(M)$, for any $T>0$, we have
$$
\min_{\rho \in S^*M} \int_0^T f \circ \pi \circ \varphi_t^+ (\rho)dt = \min_{\rho \in S^*M} \int_0^T f \circ \pi \circ \varphi_t^-(\rho) dt .
$$
\end{corollary}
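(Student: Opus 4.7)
The plan is to deduce the equality of minima directly from the symmetry relation $\sigma \circ \varphi_t = \varphi_{-t} \circ \sigma$ provided by Lemma~\ref{l:phi+phi-}, together with the elementary observation that the projection $\pi : T^*M \to M$ forgets the $\xi$-variable and hence satisfies $\pi \circ \sigma = \pi$.

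More precisely, I would first rewrite $\varphi_t^-(\rho)$ using both parts of Lemma~\ref{l:phi+phi-}: since $\varphi_t^- = \varphi_{-t}$ and $\varphi_{-t} = \sigma \circ \varphi_t \circ \sigma$ (using that $\sigma$ is an involution), we get
\begin{equation*}
\pi \circ \varphi_t^-(\rho) \;=\; \pi \circ \sigma \circ \varphi_t \circ \sigma(\rho) \;=\; \pi \circ \varphi_t^+(\sigma(\rho)),
\end{equation*}
where the last equality uses $\pi \circ \sigma = \pi$. Integrating against $f$ in $t \in (0,T)$ then yields
\begin{equation*}
\int_0^T f \circ \pi \circ \varphi_t^-(\rho)\, dt \;=\; \int_0^T f \circ \pi \circ \varphi_t^+(\sigma(\rho))\, dt.
\end{equation*}

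Finally, since $\sigma$ restricts to an involution (hence a bijection) of $S^*M$ onto itself, taking the minimum over $\rho \in S^*M$ on both sides and performing the change of variable $\rho' = \sigma(\rho)$ on the right-hand side concludes the proof. There is no real obstacle here; the statement is essentially a change-of-variables formula combined with the flow symmetry already established in Lemma~\ref{l:phi+phi-}, and is indeed contained (as the hint after the statement suggests) in the stronger functional identity $g_T^- \circ \sigma = g_T^+$ of Lemma~\ref{l:egal-gpm-pde}, specialized to the case $b_0 = b_1 = 0$ and $\obs^2 = f$.
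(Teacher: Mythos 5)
Your proof is correct and uses exactly the same mechanism as the paper: the flow-reversal symmetry of Lemma~\ref{l:phi+phi-} ($\varphi_t^- = \varphi_{-t}$ and $\sigma\circ\varphi_t = \varphi_{-t}\circ\sigma$), the fact that $\pi\circ\sigma=\pi$, and the change of variables under the involution $\sigma$ on $S^*M$. The paper packages this as the special case $b_0=b_1=0$, $\obs^2=f$ of Lemma~\ref{l:egal-gpm-pde}, which you also note, so the two arguments coincide in substance.
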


\subsection{Comparing $T_{UC}(\omega)$ and $T_{GCC}(\omega)$}
\label{s:compare-time}
In this section, we briefly prove that  $2 \mathcal{L}(M ,\omega) =T_{UC}(\omega)\leq T_{GCC}(\omega)$ (where these quantities are defined in~\eqref{e:def-L}, \eqref{e:def-TUC} and~\eqref{e:def-TGCC} respectively) in general and study the case of equality.
\begin{lemma}
\label{lmTUleqTGCC}
We always have $T_{GCC}(E)\geq 2\mathcal{L}(M ,E)$.
\end{lemma}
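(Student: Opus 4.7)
The plan is to exhibit, for every $\varepsilon>0$, a geodesic of length at least $2\mathcal{L}(M,E)-\varepsilon$ which does not meet $E$, and then invoke the characterization
\[
T_{GCC}(E) \;=\; \sup \{\length(\Gamma),\, \Gamma \text{ geodesic curve on $M$ with } \Gamma\cap E=\emptyset\}
\]
recalled in the introduction.

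First, by the very definition of $\mathcal{L}(M,E)=\sup_{x\in M}\dist(x,E)$, I would pick a point $x_1\in M$ such that
\[
\dist(x_1, E) \;\geq\; \mathcal{L}(M,E) - \varepsilon/4.
\]
Since $M$ is compact (hence geodesically complete by Hopf--Rinow), for any unit vector $v\in S_{x_1}M$ the geodesic $\gamma_v\colon\R\to M$ with $\gamma_v(0)=x_1$ and $\dot\gamma_v(0)=v$ is globally defined and parametrized by arclength. Fix such a $v$ and set $r:=\dist(x_1,E)-\varepsilon/4$, so that $2r\geq 2\mathcal{L}(M,E)-\varepsilon$.

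The key step is the triangle inequality for the Riemannian distance: for any $t\in[-r,r]$ and any $y\in E$,
\[
\dist(\gamma_v(t),y) \;\geq\; \dist(x_1,y) - \dist(x_1,\gamma_v(t)) \;\geq\; \dist(x_1,y) - |t|,
\]
because $\gamma_v$ is parametrized by arclength and hence $\dist(x_1,\gamma_v(t))\leq |t|$. Taking the infimum over $y\in E$ yields $\dist(\gamma_v(t), E)\geq \dist(x_1,E) - |t| \geq \varepsilon/4 > 0$ for every $t\in[-r,r]$, so in particular $\gamma_v(t)\notin E$. Thus $\Gamma:=\gamma_v([-r,r])$ is a geodesic curve on $M$ with $\Gamma\cap E=\emptyset$ and $\length(\Gamma)=2r\geq 2\mathcal{L}(M,E)-\varepsilon$, which gives $T_{GCC}(E)\geq 2\mathcal{L}(M,E)-\varepsilon$. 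Letting $\varepsilon\to 0^+$ concludes the proof.

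I do not expect any serious obstacle here: the only technical point to make sure of is that the geodesic $\gamma_v$ is defined on the full interval $[-r,r]$, which is immediate from completeness of $M$, and that the distance bound $\dist(x_1,\gamma_v(t))\leq |t|$ really holds (which is exactly the fact that the length of $\gamma_v$ between $0$ and $t$ is $|t|$, and the distance is no larger than the length of any joining curve).
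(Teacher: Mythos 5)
Your proof is correct and follows essentially the same route as the paper's: pick a point $x_1$ nearly maximizing $\dist(\cdot,E)$, shoot a unit-speed geodesic in both directions from $x_1$ for time slightly less than $\dist(x_1,E)$, and use the characterization of $T_{GCC}$ via lengths of geodesics avoiding $E$. The only cosmetic difference is that you obtain the exclusion $\gamma_v(t)\notin E$ directly from the triangle inequality $\dist(\gamma_v(t),E)\geq\dist(x_1,E)-|t|$, whereas the paper phrases the same fact as a small contradiction argument and explicitly concatenates the forward and backward geodesic arcs via Lemma~\ref{l:phi+phi-}; this is the same geometric content.
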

\bnp
Let $\eps>0$, we prove that for any $x\in M$, $2\dist(x,E)\leq T_{GCC}(E)+2\eps$.

Fix $x\in M$. By definition, there exists $x_1\in E$ so that 
\bnan
\label{inegdist}
\dist(x,E)\leq d_1 : =\dist(x,x_1)\leq \dist(x,E)+\eps.
\enan
Take any $\xi\in S_x^* M$ and define the geodesic path $\gamma(t)=\pi \circ \varphi_t((x,\xi))$ for $t \in [0,d_1]$. According to Lemma~\ref{l:geodesic-flow}, we have
\bna
\length\big(\pi \circ \varphi_t((x,\xi))_{|[0,T]} \big) = T , \quad \text{for all } T>0 . 
\ena
Hence, we have $\gamma(t)\notin E$ for $t\in [0,d_1-\eps]$, otherwise we would have $\dist(x,E)\leq d_1-\eps$, which contradicts \eqref{inegdist}. The same arguments proves that if we define $\widetilde{\gamma}(t)=\pi \circ \varphi_t((x,-\xi))$ defined on $[0,d_1-\eps]$,  we have $\widetilde{\gamma}(t)\notin E$ for $t\in [0,d_1-\eps]$. Using Lemma~\ref{l:phi+phi-}, we also have $\widetilde{\gamma}(t)=\pi \circ \varphi_{-t}((x,\xi))$ on $[0,d_1-\eps]$.

The curve $t \mapsto \pi \circ \varphi_t((x,\xi))$ for $t \in [-d_1 +\eps ,d_1-\eps]$ is thus the concatenation of the two geodesics $\gamma$ and $\widetilde{\gamma}$. This is still a geodesic of length $2d_1-2\eps$ that does not intersect $E$. Therefore, we have $T_{GCC}(E)\geq 2d_1-2\eps$. The first part of \eqref{inegdist} gives $T_{GCC}(E)\geq 2\dist(x,E)-2\eps$. This gives the result.
\enp

\begin{remark}
\label{rem-boundary-T-T}
In the case $\d M \neq \emptyset$, we also have $T_{UC}(\omega)\leq T_{GCC}(\omega)$ for $\omega$ open subsets of $M$, as well as $T_{UC}(\Gamma)\leq T_{GCC}(\Gamma)$ for $\omega$ open subsets of $\d M$. The proof is similar, replacing $\varphi_t$ by the appropriate broken bicharacteristic flow (see~\cite{MS:78} or \cite[Chapter~XXIV]{Hoermander:V3}).

\end{remark}

\begin{lemma}[Equality case]
\label{lmTUeqTGCC}
Assume $T_{GCC}(E)=2\mathcal{L}(M ,E)=2R_0 >0$, then, there is $x_0\in M$ such that $\dist(x_0 , E) = R_0$ and for every $\xi\in S^*_{x_0} M$
\bna
&\pi \circ \varphi_t((x_0,\xi))\notin E&\quad \forall |t|< R_0\\
&\pi \circ \varphi_t((x_0,\xi))\in \overline{E}&\quad \forall |t|=R_0.
\ena
Moreover, these properties are also satisfied by any $x_0\in M$ such that $\dist(x_0 , E) = R_0$.

Finally, for any $x \in M$, we have the following alternative:
\begin{itemize}
\item either $\dist(x, E) < R_0$,
\item or $\dist(x, E) = R_0$ and the connected component of $M\setminus \overline{E}$ containing $x$ is the open ball $B(x, R_0)$.
\end{itemize}
\end{lemma}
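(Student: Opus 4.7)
The plan is to proceed in three steps, corresponding to the three conclusions, each one feeding into the next.

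\textbf{Step 1 (existence of $x_0$).} The distance function $x \mapsto \dist(x,E)$ is $1$-Lipschitz on $M$, hence continuous, and $M$ is compact. So the supremum $R_0 = \mathcal{L}(M,E) = \sup_x \dist(x,E)$ is attained at some $x_0 \in M$.

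\textbf{Step 2 (geodesic behavior from any maximizer).} Let $x_0$ be any point with $\dist(x_0,E)=R_0$, fix $\xi\in S^*_{x_0}M$, and set $\gamma(t)=\pi\circ\varphi_t(x_0,\xi)$, which by Lemma~\ref{l:geodesic-flow} is a unit-speed geodesic. For $|t|<R_0$ we have $\dist(x_0,\gamma(t))\leq|t|<R_0$, so $\gamma(t)\notin E$; the same inequality with $\dist(\cdot,E)=\dist(\cdot,\overline{E})$ gives in fact $\gamma(t)\notin\overline{E}$ for $|t|<R_0$ (a point I would record for Step~3). For $|t|=R_0$ I argue by contradiction: suppose $\gamma(R_0)\notin\overline{E}$. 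Then by openness of $M\setminus\overline{E}$ there exists $\eps>0$ with $\gamma(t)\notin\overline{E}$ for $t\in(R_0-\eps,R_0+\eps)$, and concatenating with the previous observation $\gamma|_{(-R_0,R_0+\eps)}$ is a single geodesic of length $2R_0+\eps$ disjoint from $E$. The definition of $T_{GCC}(E)$ as the supremum of lengths of such geodesics then forces $T_{GCC}(E)\geq 2R_0+\eps>2R_0$, contradicting the hypothesis. The case $t=-R_0$ is symmetric.

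\textbf{Step 3 (the dichotomy on connected components).} Fix $x\in M$; then $\dist(x,E)\leq R_0$. If $\dist(x,E)<R_0$ we are in the first alternative, so assume $\dist(x,E)=R_0$ and let $U$ denote the connected component of $M\setminus\overline{E}$ containing $x$. Applying Step~2 to $x$: for any $y\in B(x,R_0)$, by Hopf–Rinow on the compact (hence complete) Riemannian manifold there is a minimizing geodesic from $x$ to $y$ of length $\dist(x,y)<R_0$, which is of the form $\pi\circ\varphi_t(x,\xi)$ for $t=\dist(x,y)$; Step~2 gives $y\notin\overline{E}$. Thus $B(x,R_0)\subset M\setminus\overline{E}$, and since radial minimizing geodesics from $x$ show $B(x,R_0)$ is path-connected, we get $B(x,R_0)\subset U$.

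For the reverse inclusion, suppose for contradiction that some $y\in U$ satisfies $\dist(x,y)\geq R_0$. As $U$ is an open connected subset of a manifold it is path-connected, so pick a continuous path $\alpha\colon[0,1]\to U$ with $\alpha(0)=x$, $\alpha(1)=y$. The continuous function $s\mapsto\dist(x,\alpha(s))$ vanishes at $s=0$ and is $\geq R_0$ at $s=1$, so by the intermediate value theorem it equals $R_0$ at some $s_0\in(0,1]$; set $z=\alpha(s_0)\in U$. Again by Hopf–Rinow, there is a minimizing geodesic from $x$ to $z$ of length exactly $R_0$, of the form $\pi\circ\varphi_t(x,\xi_z)$ on $[0,R_0]$; its endpoint $z=\pi\circ\varphi_{R_0}(x,\xi_z)$ lies in $\overline{E}$ by Step~2. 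But $z\in U\subset M\setminus\overline{E}$, a contradiction. Hence $U\subset B(x,R_0)$ and $U=B(x,R_0)$.

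The main subtlety, rather than a genuine obstacle, is to strengthen the easy half of Step~2 from ``$\gamma(t)\notin E$'' to ``$\gamma(t)\notin\overline{E}$'' for $|t|<R_0$: this refinement is essential in Step~3 to conclude that $B(x,R_0)$ lies in $M\setminus\overline{E}$ (not merely in $M\setminus E$) and hence in the \emph{same} connected component $U$, which is what makes the IVT/contradiction argument go through.
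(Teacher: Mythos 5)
Your proof is correct and follows essentially the same route as the paper's: extract a maximizer $x_0$, use the length-contradiction with $T_{GCC}(E)=2R_0$ for the $|t|=R_0$ case, and then apply Step~2 at a general maximizer via path-connectedness and the intermediate value theorem to get $U\subset B(x,R_0)$. The one place you go further is that you also establish $B(x,R_0)\subset U$ explicitly (via your strengthened observation that $\gamma(t)\notin\overline{E}$, not merely $\notin E$, for $|t|<R_0$), whereas the paper proves only $U\subset B(x_0,R_0)$ and treats the converse inclusion as immediate; this is a genuine, if minor, gap you correctly identified and filled.
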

\bnp
The function $x\mapsto \dist(x,E)$ is a continuous function on the compact manifold $M$. Consider $x_0$ one of the points where it takes its maximum $R_0=\dist(x_0,E)=\mathcal{L}(M ,E)$. 

For any $\xi \in S_{x_0}^* M$, we have necessarily $\pi \circ \varphi_t((x_0,\xi))\notin E$ $\forall |t|< R_0$, otherwise, we would have $\dist(x_0,E)< R_0$.

Moreover, assume that there exists $\xi_0 \in S_{x_0}^* M$ so that $\pi\circ \varphi_{R_0}((x_0,\xi_0))\notin \overline{E}$. By continuity of $t \mapsto \pi \circ \varphi_t((x_0,\xi_0))$ and the fact that the complementary of $\overline{E}$ is open, there exists $\eps>0$ so that $\pi \circ \varphi_t((x_0,\xi_0))\notin \overline{E}$ for $t\in ]R_0-\eps,R_0+\eps[$. In particular, by combining with the previous result, we have that $\pi \circ \varphi_t((x_0,\xi_0))\notin E$ for $t\in ]-R_0,R_0+\eps[$. We have constructed a geodesic path of length at least $2R_0+\eps/2$ that does not intersect $E$. This implies, in particular, that $T_{GCC}(E)\geq 2R_0+\eps/2$, which is a contradiction.

We now prove the last statement. By definition, $\dist(x, E) > R_0$ is impossible, so we only have to consider $x_0$ so that $\dist(x_0, E) = R_0$. 
Since $M\setminus \overline{E}$ is an open connected set of $M$ it is also arcwise connected. Let $U$ be a connected set of $M\setminus \overline{E}$ containing $x_0$. We prove $U\subset B(x_0, R_0)$. Let $x\in U$. By assumption, there exists $\gamma$ one continuous path so that $\gamma(0)=x_0$, $\gamma(1)=x$ and $\gamma(t)\in U\subset M\setminus \overline{E}$. In particular, $\gamma(t)\notin \overline{E}$ for $t\in [0,1]$. Assume $d(x_0,x)\geq R_0$. By continuity, there exists $t\in [0,1]$ so that $d(x_0,\gamma(t))=R_0$. There is a geodesic miminizing the distance between $\gamma(t)$ and $x_0$. That is, there exists $\xi_0 \in S_{x_0}^* M$ so that $\pi\circ \varphi_{R_0}((x_0,\xi_0))=\gamma(t)$. In particular, by the previous statement, $\gamma(t)\in \overline{E}$. This is a contradiction. So, we have proved $U\subset B(x_0,R_0)$, which gives the result.
\enp

\begin{remark}
Note that we have the two equivalences $T_{GCC}(E)=0 \Longleftrightarrow$ ($E$ satisfies GCC and $\ovl{E}  = M$), and $T_{UC}(E) = 0 \Longleftrightarrow \ovl{E}  = M$.
\end{remark}

The following result is used in Section~\ref{s:low-freq-T}.
\begin{lemma}
\label{l:omega-0}
Let $\omega$ be an open subset of $M$ satisfying GCC and such that $T_{UC}(\omega) < T_{GCC}(\omega)$. Then, there exists an open subset $\omega_0$ of $M$ such that 
$$
\ovl{\omega}_0 \subset \omega ,  \quad \text{and} \quad T_{UC}(\omega_0) < T_{GCC}(\omega) .
$$
\end{lemma}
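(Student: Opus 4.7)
My plan is to construct $\omega_0$ by shrinking $\omega$ slightly away from its complement. Explicitly, for $\eps>0$, I would set
\[
\omega_\eps := \{x \in M : \dist(x, M \setminus \omega) > \eps\}.
\]
Since $x \mapsto \dist(x, M\setminus \omega)$ is $1$-Lipschitz, $\omega_\eps$ is open, and for any $\eps>0$ its closure is contained in $\{x : \dist(x, M\setminus \omega) \geq \eps\} \subset \omega$, so $\ovl{\omega_\eps} \subset \omega$ automatically. Writing $R := \mathcal{L}(M, \omega)$ so that $T_{UC}(\omega) = 2R$, and setting $\eta := T_{GCC}(\omega) - 2R > 0$ by hypothesis, it then suffices to show $\mathcal{L}(M, \omega_\eps) < R + \eta/2$ for $\eps$ small enough, and take $\omega_0 := \omega_\eps$.

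The main technical step is therefore the uniform approximation $\mathcal{L}(M, \omega_\eps) \to R$ as $\eps \to 0^+$, which I would prove by a compactness/covering argument. Fix $\delta > 0$. For any $x \in M$, I first pick $y \in \omega$ with $\dist(x, y) \leq R + \delta/2$ (using $\dist(x, \omega) \leq R$), then pick $r_x > 0$ with $B(y, 2 r_x) \subset \omega$ (using that $\omega$ is open). The inclusion $B(y, 2r_x) \subset \omega$ gives $\dist(y, M\setminus \omega) \geq 2 r_x > r_x$, so $y \in \omega_{r_x}$ and hence $\dist(x, \omega_{r_x}) \leq R + \delta/2$. By $1$-Lipschitz continuity of $z \mapsto \dist(z, \omega_{r_x})$, there is an open neighborhood $U_x \ni x$ on which $\dist(\cdot, \omega_{r_x}) \leq R + \delta$. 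Compactness of $M$ then yields a finite subcover $M = U_{x_1} \cup \cdots \cup U_{x_N}$, and I set $\eps := \min_i r_{x_i} > 0$. Since $\eps \leq r_{x_i}$, we have $\omega_\eps \supset \omega_{r_{x_i}}$, so $\dist(\cdot, \omega_\eps) \leq \dist(\cdot, \omega_{r_{x_i}})$ pointwise; as every $z \in M$ lies in some $U_{x_i}$, this gives $\dist(z, \omega_\eps) \leq R + \delta$, hence $\mathcal{L}(M, \omega_\eps) \leq R + \delta$.

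To conclude, I apply this estimate with $\delta := \eta/4$ to obtain $\eps > 0$ with
\[
T_{UC}(\omega_\eps) = 2 \mathcal{L}(M, \omega_\eps) \leq 2R + \eta/2 < 2R + \eta = T_{GCC}(\omega),
\]
so $\omega_0 := \omega_\eps$ has all the required properties. The only delicate point is the upgrade from pointwise to uniform approximation, which is what forces the use of compactness of $M$; everything else is routine from the definitions \eqref{e:def-L} and \eqref{e:def-TUC}.
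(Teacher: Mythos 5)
Your proof is correct, and the key observation (it suffices to show $\mathcal{L}(M,\omega_0)$ can be made arbitrarily close to $\mathcal{L}(M,\omega)$) matches the paper's. The constructions are, however, genuinely different. The paper takes a finite $\eps/2$-net $(x_i)$ of $M$, picks $y_i\in\omega$ near-optimal for $x_i$ and radii $r_i$ with $\overline{B(y_i,r_i)}\subset\omega$, and sets $\omega_0 := \bigcup_i B(y_i,r_i)$; the bound $\mathcal{L}(M,\omega_0)\leq\mathcal{L}(M,\omega)+\eps$ then falls out of a single triangle inequality. You instead take the canonical $\eps$-shrink $\omega_\eps := \{\dist(\cdot,M\setminus\omega)>\eps\}$, which makes $\overline{\omega_\eps}\subset\omega$ automatic, but then you need a second compactness layer (the cover by the sets $U_x$ followed by $\eps := \min_i r_{x_i}$) to make the pointwise estimate $\dist(x,\omega_{r_x})\leq R+\delta/2$ uniform. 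The tradeoff: your $\omega_0$ depends only on $\omega$ and a single scalar $\eps$ (and is nested monotonically in $\eps$, which can be convenient), while the paper's $\omega_0$ depends on the chosen net and radii but the argument is shorter and sidesteps the uniform-approximation step. Both need $\omega\neq M$, which is guaranteed here by $T_{UC}(\omega)<T_{GCC}(\omega)$ (hence $T_{UC}(\omega)>0$, hence $\overline\omega\neq M$); it would be worth saying this explicitly before writing $\dist(x,M\setminus\omega)$, since that expression degenerates when $M\setminus\omega=\emptyset$.
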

 \bnp
We prove the more general fact for an open set $\omega\subset M$:
\bnan
\label{e:epseps}
\text{For any $\eps>0$, there exists an open set $\omega_0$ with $\ovl{\omega}_0 \subset \omega$ so that $\mathcal{L}(M ,\omega_0)\leq \mathcal{L}(M ,\omega)+\eps$.}
\enan
By compactness of $M$, we can find a finite sequence of points $(x_i)_{i\in I}$ with $I$ finite, so that 
\bna
M=\cup_{i\in I} B(x_i,\eps/2),
\ena
where $B(y,r) = \{x \in M, \dist(x,y)<r\}$.
By definition of $\mathcal{L}(M ,\omega)$, for any $x_i$, we have $\dist(x_i,\omega)\leq \mathcal{L}(M ,\omega)$ and there exists $y_i\in \omega$ so that $\dist(x_i,y_i)\leq \mathcal{L}(M ,\omega)+\eps/2$.

Now, since $\omega$ is an open set, there exists $r_i$ so that $\overline{B(y_i,r_i)}\subset \omega$. Now, we take 
\bna
\omega_0:=\cup_{i\in I}B(y_i,r_i).
\ena
For any $x\in M$, we can pick $i\in I$ so that $x\in B(x_i,\eps/2)$. In particular, $\dist(x,y_i)\leq \dist(x,x_i)+\dist(x_i,y_i)\leq \mathcal{L}(M ,\omega)+\eps$. Therefore, for any $x\in M$, we have $\dist(x,\omega_0)\leq \mathcal{L}(M ,\omega)+\eps$. This gives $\mathcal{L}(M ,\omega_0)\leq \mathcal{L}(M ,\omega)+\eps$. That $\ovl{\omega}_0 \subset \omega$ comes from $\overline{B(y_i,r_i)}\subset \omega$ for all $i\in I$ and the finiteness of $I$. This concludes the proof of~\eqref{e:epseps}, and hence of the lemma.
\enp

\bibliographystyle{alpha}
\bibliography{bibli}

\end{document}